\title{{\bf Moduli spaces of stable quotients and 
wall-crossing phenomena}}
\date{}
\author{Yukinobu Toda}
\DeclareFontFamily{U}{rsfs}{%
\skewchar\font127}
\DeclareFontShape{U}{rsfs}{m}{n}{%
<-6>rsfs5<6-8.5>rsfs7<8.5->rsfs10}{}
\DeclareSymbolFont{rsfs}{U}{rsfs}{m}{n}
\DeclareRobustCommand*\rsfs{%
\@fontswitch\relax\mathrsfs}
\theoremstyle{plain}
\newtheorem{thm}{Theorem}[section]
\newtheorem{prop}[thm]{Proposition}
\newtheorem{lem}[thm]{Lemma}
\newtheorem{defi}[thm]{Definition}
\newtheorem{rmk}[thm]{Remark}
\newtheorem{prob}[thm]{Problem}
\newtheorem{prop-defi}[thm]{Proposition-Definition}
\newtheorem{thm-defi}[thm]{Theorem-Definition}
\newtheorem{lem-defi}[thm]{Lemma-Definition}
\newtheorem{exam}[thm]{Example}
\newdimen\argwidth
\def\db[#1\db]{
 \setbox0=\hbox{$#1$}\argwidth=\wd0
 \setbox0=\hbox{$\left[\box0\right]$}
  \advance\argwidth by -\wd0
 \left[\kern.3\argwidth\box0 \kern.3\argwidth\right]}
\newcommand{\cC}{\mathcal{C}}
\newcommand{\eE}{\mathcal{E}}
\newcommand{\hH}{\mathcal{H}}
\newcommand{\lL}{\mathcal{L}}
\newcommand{\mM}{\mathcal{M}}
\newcommand{\oO}{\mathcal{O}}
\newcommand{\qQ}{\mathcal{Q}}
\newcommand{\sS}{\mathcal{S}}
\newcommand{\xX}{\mathcal{X}}
\newcommand{\Supp}{\mathop{\rm Supp}\nolimits}
\newcommand{\Hom}{\mathop{\rm Hom}\nolimits}
\newcommand{\dR}{\mathbf{R}}
\newcommand{\Hilb}{\mathop{\rm Hilb}\nolimits}
\newcommand{\Pic}{\mathop{\rm Pic}\nolimits}
\newcommand{\Isom}{\mathop{\rm Isom}\nolimits}
\newcommand{\Tan}{\mathop{\rm Tan}\nolimits}
\newcommand{\Obs}{\mathop{\rm Obs}\nolimits}
\newcommand{\id}{\textrm{id}}
\newcommand{\rk}{\mathop{\rm rk}\nolimits}
\newcommand{\Cont}{\mathop{\rm Cont}\nolimits}
\newcommand{\Ext}{\mathop{\rm Ext}\nolimits}
\newcommand{\Spec}{\mathop{\rm Spec}\nolimits}
\newcommand{\rank}{\mathop{\rm rank}\nolimits}
\newcommand{\ev}{\mathop{\rm ev}\nolimits}
\newcommand{\valence}{\mathop{\rm val}\nolimits}
\newcommand{\vir}{\mathop{\rm vir}\nolimits}
\newcommand{\cneq}{\mathrel{\raise.095ex\hbox{:}\mkern-4.2mu=}}
\newcommand{\eqcn}{\mathrel{=\mkern-4.5mu\raise.095ex\hbox{:}}}
\newcommand{\Cok}{\mathop{\rm Cok}\nolimits}
\newcommand{\Aut}{\mathop{\rm Aut}\nolimits}
\newcommand{\SL}{\mathop{\rm SL}\nolimits}
\newcommand{\Sch}{\mathop{\rm Sch}\nolimits}
\newcommand{\pt}{\mathop{\rm pt}\nolimits}
\newcommand{\Sym}{\mathop{\rm Sym}\nolimits}
\newcommand{\Quot}{\mathop{\rm Quot}\nolimits}
\newcommand{\Ker}{\mathop{\rm ker}\nolimits}
\newcommand{\GL}{\mathop{\rm GL}\nolimits}
\newcommand{\PGL}{\mathop{\rm PGL}\nolimits}
\newcommand{\length}{\mathop{\rm length}\nolimits}
\begin{document}
\maketitle

\begin{abstract}
The moduli space of holomorphic maps from Riemann surfaces
to the Grassmannian is known to have two kinds of 
compactifications: Kontsevich's stable map compactification
and Marian-Oprea-Pandharipande's stable quotient compactification. 
Over a non-singular curve, the latter moduli space is 
Grothendieck's Quot scheme. 
In this paper, we give the notion of `$\epsilon$-stable 
quotients' for a positive real number $\epsilon$, and  
show that
stable maps and stable quotients are related by wall-crossing 
phenomena.
We will also discuss Gromov-Witten type 
invariants associated to 
$\epsilon$-stable quotients, and investigate
them under wall-crossing. 
\end{abstract}
\section{Introduction}
The purpose of this paper is 
to investigate wall-crossing phenomena 
of several compactifications of the 
moduli spaces of holomorphic maps from Riemann surfaces to the 
Grassmannian. So far, two kinds of compactifications 
are known: Kontsevich's
stable map compactification~\cite{Ktor} and
Marian-Oprea-Pandharipande's stable quotient
compactification~\cite{MOP}. 
The latter moduli space is introduced rather recently, 
and it is Grothendieck's Quot scheme over a 
non-singular curve. 
In this paper, we will introduce the notion of 
\textit{$\epsilon$-stable quotients}
for a positive real number $\epsilon \in \mathbb{R}_{>0}$, 
and show that 
the moduli space of $\epsilon$-stable quotients
is a proper Deligne-Mumford stack
over $\mathbb{C}$
with a perfect obstruction theory. 
It will turn out that there is a wall and 
chamber structure on the space of stability 
conditions $\epsilon \in \mathbb{R}_{>0}$, and the moduli 
spaces are constant at chambers but jump at walls, i.e. 
wall-crossing phenomena occurs. 
 We will see that 
stable maps and stable quotients 
are related by the above wall-crossing phenomena. 
We will also consider the virtual fundamental classes on 
the moduli spaces of $\epsilon$-stable quotients, 
the associated enumerative invariants,
and investigate them 
under the change of $\epsilon \in \mathbb{R}_{>0}$.
This is interpreted as a wall-crossing formula of 
Gromov-Witten (GW) type invariants.

\subsection{Stable maps and stable quotients}
Let $C$ be a smooth projective curve
 over 
$\mathbb{C}$ of genus $g$, 
and $\mathbb{G}(r, n)$ the 
Grassmannian which parameterizes $r$-dimensional $\mathbb{C}$-vector 
subspaces in $\mathbb{C}^n$.
Let us consider a holomorphic map
\begin{align}\label{map}
f\colon C \to \mathbb{G}(r, n), 
\end{align}
satisfying the following,
\begin{align*}
f_{\ast}[C]=d \in H_2(\mathbb{G}(r, n), \mathbb{Z})\cong \mathbb{Z}.
\end{align*}
By the universal property of $\mathbb{G}(r, n)$, 
giving a map (\ref{map}) is equivalent to 
giving a quotient,
\begin{align}\label{exseq}
\oO_C^{\oplus n} \twoheadrightarrow Q, 
\end{align}
where $Q$ is a locally free sheaf of rank $n-r$
and degree $d$. 
The moduli space of maps (\ref{map})
is not compact, and 
two kinds of compactifications are known:
compactification as maps (\ref{map}) or compactification as quotients
(\ref{exseq}). 
\begin{itemize}
\item {\bf Stable map compactification:}
We attach trees of rational curves to $C$, and 
consider moduli space of 
maps from the attached nodal curves to $\mathbb{G}(r, n)$
with finite automorphisms. 
\item {\bf Quot scheme compactification:}
We consider the moduli space of quotients (\ref{exseq}), 
allowing torsion subsheaves in $Q$. 
The resulting moduli space is Grothendieck's Quot scheme on $C$. 
\end{itemize}
In the above compactifications,  
the (stabilization of the)
source curve $C$ is fixed in the moduli. 
If we vary the curve $C$ as a nodal curve
and give $m$-marked points on it, 
we obtain two kinds of compact moduli spaces, 
\begin{align}\label{Stmap}
&\overline{M}_{g, m}(\mathbb{G}(r, n), d), \\
\label{Stquo}
&\overline{Q}_{g, m}(\mathbb{G}(r, n), d). 
\end{align}
The space (\ref{Stmap}) is a moduli 
space 
of Kontsevich's \textit{stable maps}~\cite{Ktor}.
Namely this is the 
moduli space 
of data, 
\begin{align*}
(C, p_1, \cdots, p_m, f\colon C\to \mathbb{G}(r, n)), 
\end{align*}
where $C$ is a genus $g$, $m$-pointed 
nodal curve and 
$f$ is a morphism with finite automorphisms. 

The space (\ref{Stquo}) is a moduli space of 
Marian-Oprea-Pandharipande's stable quotients~\cite{MOP}, 
 which we call \textit{MOP-stable quotients}.
 By definition a
 MOP-stable quotient consists of data,
 \begin{align}\label{data}
 (C, p_1, \cdots, p_m, \oO_C^{\oplus n} \stackrel{q}{\twoheadrightarrow} Q),
 \end{align}
 for an $m$-pointed nodal curve $C$ and a quotient 
sheaf $Q$ on it, satisfying the following
stability condition. 
\begin{itemize}
\item The coherent sheaf $Q$ is locally free near 
nodes and markings. 
In particular, the determinant 
line bundle $\det (Q)$ is well-defined. 
\item The $\mathbb{R}$-line bundle 
\begin{align}\label{Rline}
\omega_C(p_1 +\cdots +p_m)\otimes \det(Q)^{\otimes \epsilon},
\end{align}
is ample for \textit{every} $\epsilon>0$. 
\end{itemize}
 The space (\ref{Stquo}) is the moduli space of 
 MOP-stable quotients (\ref{data}) 
with $C$ genus $g$, 
 $\rank(Q)=n-r$ and $\deg (Q)=d$. 
Both moduli spaces (\ref{Stmap}) and (\ref{Stquo})
have the following properties. 
\begin{itemize}
\item 
The moduli spaces (\ref{Stmap})
and (\ref{Stquo}) are proper
 Deligne-Mumford stacks over $\mathbb{C}$ 
with perfect obstruction theories~\cite{BGW}, \cite{MOP}. 
 \item The moduli spaces (\ref{Stmap}), (\ref{Stquo}) 
carry proper morphisms, 
\begin{align}\label{MQdia}
\xymatrix{
\overline{M}_{g, m}(\mathbb{G}(r, n), d) \ar[dr] & & 
\overline{Q}_{g, m}(\mathbb{G}(r, n), d) \ar[dl] \\
& \overline{M}_{g, m}.&
}
\end{align}
\end{itemize}
Here $\overline{M}_{g, m}$ is the moduli space 
of genus $g$, $m$-pointed stable curves. 
Taking the fibers of the 
diagram (\ref{MQdia}) 
over a non-singular curve $[C] \in \overline{M}_{g, 0}$, 
we obtain the compactifications as maps (\ref{map}),
quotients (\ref{exseq}) respectively. 
Also the associated virtual fundamental classes
 on the moduli spaces (\ref{Stmap}), (\ref{Stquo}) are 
compared in~\cite[Section~7]{MOP}. 
\subsection{$\epsilon$-stable quotients}
The purpose of this paper is to introduce 
 a variant of 
stable quotient theory, depending 
on a positive real number, 
\begin{align}\label{fixed}
\epsilon \in \mathbb{R}_{>0}. 
\end{align}
We define an \textit{$\epsilon$-stable quotient} 
to be data (\ref{data}), 
which has the same property to 
MOP-stable quotients 
except the following. 
\begin{itemize}
\item The $\mathbb{R}$-line bundle (\ref{Rline}) 
is only ample with respect to the \textit{fixed} 
stability parameter $\epsilon\in \mathbb{R}_{>0}$. 
\item For any $p\in C$, the torsion subsheaf $\tau(Q)\subset Q$ satisfies 
\begin{align*}
\epsilon \cdot \length \tau(Q)_{p} \le 1.
\end{align*} 
\end{itemize}
The idea of $\epsilon$-stable quotients
originates from Hassett's weighted pointed stable 
curves. 
In~\cite{BH}, Hassett introduces the notion of 
weighted pointed stable curves 
$(C, p_1, \cdots, p_m)$, where 
$C$ is a nodal curve and $p_i \in C$
are marked points. 
The stability condition depends on a choice 
of a weight, 
\begin{align}\label{intro:weight}
(a_1, a_2, \cdots, a_m) \in (0, 1]^m,
\end{align}
which put a similar constraint 
for the pointed curve $(C, p_1, \cdots, p_m)$
to our $\epsilon$-stability. 
(See Definition~\ref{def:weighted}.)
A choice of $\epsilon$ 
in our situation corresponds to a choice 
of a weight (\ref{intro:weight})
 for weighted pointed stable curves. 

 The moduli space of 
 $\epsilon$-stable quotients (\ref{data}) 
with $C$ genus $g$, 
 $\rank(Q)=n-r$ and $\deg (Q)=d$ is denoted 
 by 
 \begin{align}\label{eStquo}
 \overline{Q}_{g, m}^{\epsilon}(\mathbb{G}(r, n), d). 
 \end{align}
We show the following result. 
(cf.~Theorem~\ref{thm:rep}, 
Subsection~\ref{subsec:Pro},
Proposition~\ref{prop:wall}, 
Proposition~\ref{wall2}, Theorem~\ref{thm:cross}.)
\begin{thm}\label{thm:main}
(i) The moduli space 
$\overline{Q}_{g, m}^{\epsilon}(\mathbb{G}(r, n), d)$
is a proper Deligne-Mumford stack 
 over $\mathbb{C}$ with a 
perfect obstruction theory. 
Also there is a proper morphism, 
\begin{align}\label{QMp}
\overline{Q}_{g, m}^{\epsilon}(\mathbb{G}(r, n), d)
\to \overline{M}_{g, m}. 
\end{align} 

(ii) There is a finite number of values 
\begin{align*}
0=\epsilon_0<\epsilon_1< \cdots < \epsilon_k<\epsilon_{k+1}=\infty,
\end{align*}
such that we have 
\begin{align*}
\overline{Q}_{g, m}^{\epsilon}(\mathbb{G}(r, n), d)=
\overline{Q}_{g, m}^{\epsilon_i}(\mathbb{G}(r, n), d),
\end{align*}
for  $\epsilon \in (\epsilon_{i-1}, \epsilon_{i}]$. 

(iii) We have the following. 
\begin{align*}
\overline{Q}_{g, m}^{\epsilon}(\mathbb{G}(r, n), d)
&\cong 
\overline{M}_{g, m}(\mathbb{G}(r, n), d), \quad  
\epsilon >2, \\
\overline{Q}_{g, m}^{\epsilon}(\mathbb{G}(r, n), d)
&\cong 
\overline{Q}_{g, m}(\mathbb{G}(r, n), d), \quad  
0<\epsilon \le 1/d.  
\end{align*}
\end{thm}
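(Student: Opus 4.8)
The plan is to establish Theorem~\ref{thm:main} by separating the three assertions, since each rests on rather different techniques: (i) is a GIT/representability and properness argument, (ii) is a finiteness-of-walls argument on the discrete data attached to an $\epsilon$-stable quotient, and (iii) is a comparison of stability conditions at the extreme chambers. For (i), I would first show representability of the moduli functor. Following the approach of \cite{MOP}, a quotient $\oO_C^{\oplus n}\twoheadrightarrow Q$ with the boundedness coming from fixed genus, rank and degree lies in a finite-type Quot-type parameter space; the new $\epsilon$-stability conditions (ampleness of $\omega_C(\sum p_i)\otimes\det(Q)^{\otimes\epsilon}$ for the \emph{fixed} $\epsilon$, and the pointwise torsion bound $\epsilon\cdot\length\tau(Q)_p\le 1$) are open conditions, hence cut out an open substack. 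One then checks the valuative criterion for properness: given a family over a punctured curve (DVR), one uses the properness of the Quot scheme to fill in a limit quotient, then modifies the limit by a sequence of elementary modifications at the special fiber to make it $\epsilon$-semistable, and shows the $\epsilon$-stable limit is unique. Finite automorphisms follow because ampleness of the twisted $\mathbb{R}$-line bundle on each component rigidifies the curve and the quotient just as in \cite{BH}. The perfect obstruction theory is the standard one for moduli of quotients on varying curves: the relative obstruction theory over $\mathfrak{M}_{g,m}$ (the Artin stack of prestable curves) is governed by $\dR\Hom_C(S, Q)$ where $S=\ker(\oO_C^{\oplus n}\twoheadrightarrow Q)$, and this two-term complex gives the perfect relative obstruction theory; composing with the smooth morphism to $\mathfrak{M}_{g,m}$ and then the map $\mathfrak{M}_{g,m}\to\overline{M}_{g,m}$ yields the absolute theory and the proper morphism \eqref{QMp}.

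For (ii), the key observation is that for a fixed $(g,m,r,n,d)$ the relevant combinatorial data of an $\epsilon$-stable quotient take only finitely many values. Concretely, the $\epsilon$-stability of $(C,p_1,\dots,p_m,\oO_C^{\oplus n}\twoheadrightarrow Q)$ depends on $\epsilon$ only through two kinds of inequalities: the ampleness of \eqref{Rline} restricted to each irreducible component $C'\subseteq C$, which reads as a strict inequality of the form $\deg_{C'}\omega_C(\sum p_i) + \epsilon\cdot\deg_{C'}\det(Q) > 0$ (or $\geq 0$ with the usual rational-curve caveats for contracted components), and the torsion bound $\epsilon\cdot\ell\le 1$ where $\ell=\length\tau(Q)_p$. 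In both cases the quantities $\deg_{C'}\det(Q)$ and $\ell$ are integers bounded in absolute value by functions of $d$ (since $Q$ has degree $d$ and nonnegative torsion length at each point, $0\le\ell\le d$, and the component degrees of a subsheaf of a fixed-degree sheaf on a fixed-genus nodal curve are bounded). Hence the conditions only change as $\epsilon$ crosses one of the finitely many rational numbers $1/\ell$ for $1\le\ell\le d$ or one of the finitely many rationals arising as $-\deg_{C'}\omega_C(\sum p_i)/\deg_{C'}\det(Q)$; collecting and ordering these gives the finite list $0=\epsilon_0<\epsilon_1<\cdots<\epsilon_k<\epsilon_{k+1}=\infty$. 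Within an open interval the set of $\epsilon$-stable objects is literally unchanged, and the half-open convention $(\epsilon_{i-1},\epsilon_i]$ records that our stability uses $\le$ rather than $<$ in the torsion bound; this is exactly Proposition~\ref{prop:wall} and Proposition~\ref{wall2} as cited. I would spell out the degree bound carefully, as that is where one must be slightly careful with torsion contributions on reducible curves.

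For (iii), consider first $\epsilon>2$. I claim any $\epsilon$-stable quotient then has $Q$ locally free, i.e.\ $\tau(Q)=0$: if $\tau(Q)_p\neq 0$ for some $p$, then $\length\tau(Q)_p\ge 1$, so $\epsilon\cdot\length\tau(Q)_p\ge\epsilon>2>1$, violating the torsion bound. So $Q$ is a vector bundle, the quotient $\oO_C^{\oplus n}\twoheadrightarrow Q$ is exactly the datum of a map $f\colon C\to\mathbb{G}(r,n)$, and the ampleness of $\omega_C(\sum p_i)\otimes\det(Q)^{\otimes\epsilon}$ for $\epsilon>2$ translates into the statement that on each rational component with few special points the degree of $f$ is positive enough, which is precisely Kontsevich's stability of the map; conversely a stable map gives such a quotient. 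The identification of obstruction theories is immediate since both are $\dR\Hom_C(S,Q)$ with $S=\ker(\oO_C^{\oplus n}\to Q)$ locally free. For $0<\epsilon\le 1/d$: here the torsion bound is vacuous, since $\length\tau(Q)_p\le d$ always forces $\epsilon\cdot\length\tau(Q)_p\le 1$, and the ampleness of \eqref{Rline} for all small $\epsilon>0$ is equivalent to the MOP condition that it be ample for \emph{every} $\epsilon>0$ (an ampleness condition on an $\mathbb{R}$-divisor class is an open condition on the coefficient, and the relevant wall values are all $>1/d$ by the bound just used, so the condition at one $\epsilon\le 1/d$ already implies it for all sufficiently small and hence, together with the component-degree inequalities which do not flip, for all $\epsilon>0$). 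Thus the moduli problems coincide with MOP-stable quotients. The main obstacle in the whole argument is the properness in (i): constructing the $\epsilon$-stable limit over a DVR requires a finite sequence of elementary modifications together with a boundedness/termination argument and a separatedness (uniqueness of limit) check, and this is the step where one must import and adapt the technical machinery of \cite{MOP} and \cite{BH} most carefully.
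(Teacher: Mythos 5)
Your treatments of (ii) and (iii) track the paper's own arguments (Lemma~\ref{lem:ob}, Propositions~\ref{prop:wall} and~\ref{wall2}, Theorem~\ref{thm:cross}) essentially verbatim: the walls are the finitely many values $1/\ell$ and $2/\ell$ read off from the component-wise degree inequalities and the torsion bound, and the two extreme chambers are identified with stable maps and MOP-stable quotients exactly as you describe. The construction of the moduli stack as an open locus in a relative Quot scheme, the finiteness of automorphisms, and the relative obstruction theory $\dR\pi_{\ast}\hH om(S,Q)^{\ast}$ over the stack of prestable curves also match the paper, although your one-line justification of finite automorphisms is too quick: ampleness of $\lL(q,\epsilon)$ alone does not rigidify a genus-zero component with one special point carrying a constant rational map, and the paper's Lemma~\ref{lem:aut} has to invoke the torsion bound (\ref{ineq}) to exclude the case where all of $\deg Q|_P$ sits as torsion at a single point fixed by a $\mathbb{G}_m$-action.

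The genuine gap is the valuative criterion in (i), which you yourself flag as the main obstacle but then resolve only by gesturing at ``a sequence of elementary modifications'' and at importing the machinery of \cite{MOP}; the paper states explicitly that the properness argument of \cite[Section~6]{MOP} does \emph{not} apply to $\epsilon$-stable quotients and substitutes a different device. The difficulty your sketch does not address is that the limiting \emph{curve} is not determined by the Quot-scheme limit over the given family: one must decide which rational tails to sprout or contract in the special fiber, and elementary modifications of the quotient sheaf over a fixed total space cannot produce that. The paper's solution is to choose a general point $g\in\mathbb{G}(r,n)$ and pass to the degeneracy divisor $Z(s_g)$ of the induced map $\oO^{\oplus r}\to S^{\vee}$, which converts the family of quotients into a family of Hassett $a(m,d,\epsilon)$-stable weighted pointed curves; properness of $\overline{M}^{\epsilon}_{g,m|d}$ from \cite{BH} then supplies the correct limit curve, the relative Quot scheme over that extended family supplies the limit quotient, and a local computation with $\eE xt$ sheaves and the identity $Z(s_{g,0})=D|_{\xX_0}$ verifies quasi-stability, ampleness and the torsion bound for the limit. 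Separatedness is likewise not ``as in the Quot scheme'': the paper proves it (Proposition~\ref{qstable}) by showing the pairs $(\xX_i,\sum_j p_j^{(i)}+\epsilon\cdot Z(s_{i,g}))$ are log canonical models over the base and invoking uniqueness of log canonical models. Without the reduction via $Z(s_g)$ to weighted stable curves, or some equivalent mechanism for pinning down the limit curve and for the uniqueness statement, your proof of properness does not go through.
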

By Theorem~\ref{thm:main} (i), 
there is the associated virtual fundamental class, 
\begin{align*}
[\overline{Q}_{g, m}^{\epsilon}(\mathbb{G}(r, n), d)]^{\rm{vir}}
\in A_{\ast}
(\overline{Q}_{g, m}^{\epsilon}(\mathbb{G}(r, n), d), \mathbb{Q}).
\end{align*}
A comparison of the above virtual fundamental classes 
under change of $\epsilon$ is obtained as follows. 
(cf.~Theorem~\ref{thm:wcf}.)
\begin{thm}
For $\epsilon \ge \epsilon' >0$ satisfying
$2g-2+\epsilon' \cdot d>0$, there is a diagram, 
\begin{align*}
\xymatrix{
  \overline{Q}_{g, m}^{\epsilon}(\mathbb{G}(r, n), d) 
\ar[r]^{\iota^{\epsilon}} & 
\overline{Q}_{g, m}^{\epsilon}(\mathbb{G}(1,\dbinom{n}{r} ), d)
\ar[d] _{c_{\epsilon, \epsilon'}}, \\
  \overline{Q}_{g, m}^{\epsilon'}(\mathbb{G}(r, n), d) 
\ar[r]^{\iota^{\epsilon'}} & 
\overline{Q}_{g, m}^{\epsilon'}(\mathbb{G}(1, \dbinom{n}{r}), d),
}
\end{align*}
such that we have 
\begin{align*}
c_{\epsilon, \epsilon' \ast}\iota_{\ast}^{\epsilon}
[\overline{Q}_{g, m}^{\epsilon}(\mathbb{G}(r, n), d)]^{\rm vir}
=\iota_{\ast}^{\epsilon'}
[\overline{Q}_{g, m}^{\epsilon'}(\mathbb{G}(r, n), d)]^{\rm vir}
\end{align*}
\end{thm}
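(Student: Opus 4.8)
The plan is to reduce the statement for the Grassmannian $\mathbb{G}(r,n)$ to the much simpler case of projective space $\mathbb{G}(1,\binom{n}{r}) = \mathbb{P}^{\binom{n}{r}-1}$ via the Pl\"ucker embedding, which is the role played by the maps $\iota^{\epsilon}$ and $\iota^{\epsilon'}$ in the diagram. First I would construct $\iota^{\epsilon}$: given an $\epsilon$-stable quotient $\oO_C^{\oplus n}\twoheadrightarrow Q$ with $\rank Q = n-r$, $\deg Q = d$, one takes the $\binom{n}{r}$-th wedge to obtain a quotient of $\oO_C^{\oplus \binom{n}{r}}$ whose determinant agrees with $\det(Q)$, and one checks that $\epsilon$-stability is preserved — the ampleness of the $\mathbb{R}$-line bundle \eqref{Rline} depends only on $\deg\det(Q)$ restricted to components, and the length condition on the torsion subsheaf is controlled because taking exterior powers does not increase the relevant local lengths in the rank-one determinant. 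This gives a morphism of moduli stacks, and I would verify it is compatible with the perfect obstruction theories up to the expected excess term, following the comparison of obstruction theories under such embeddings in the spirit of \cite[Section~7]{MOP}.

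Next I would construct the vertical map $c_{\epsilon,\epsilon'}$ directly on the $\mathbb{P}^{N}$-side, $N = \binom{n}{r}$, where an $\epsilon$-stable quotient is simply $(C,p_1,\dots,p_m,\oO_C^{\oplus N}\twoheadrightarrow L)$ with $L$ a rank-one quotient of degree $d$; equivalently, a line bundle $L$ of degree $d$ together with $N$ sections generating it away from a finite length-$\le 1/\epsilon$ subscheme, on a curve whose $\epsilon$-stability is Hassett-type with weight governed by $\epsilon$. The map $c_{\epsilon,\epsilon'}$ for $\epsilon\ge\epsilon'$ is then a contraction-and-destabilization operation on the pointed curve together with the data of pushing the torsion along: rational tails on which the weighted-stability condition fails for $\epsilon'$ but not $\epsilon$ get contracted, and the corresponding torsion length gets transferred to the attaching node. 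I would show $c_{\epsilon,\epsilon'}$ is proper (it is projective over $\overline{M}_{g,m}$, being a contraction of a universal curve) and that it is compatible with the obstruction theories, so that $c_{\epsilon,\epsilon'\,\ast}$ applied to the virtual class equals the virtual class downstairs.

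The final step is to combine these: by the commutativity of the square, $c_{\epsilon,\epsilon'\,\ast}\iota^{\epsilon}_{\ast}[\,\cdot\,]^{\vir} = \iota^{\epsilon'}_{\ast}\,(\text{pushforward under the }\mathbb{G}(r,n)\text{-side contraction})_{\ast}[\,\cdot\,]^{\vir}$, and then invoke the fact from Theorem~\ref{thm:main}(ii) that the $\mathbb{G}(r,n)$-moduli spaces are actually constant in $\epsilon$ within each chamber and that at a wall the two virtual classes already agree after pushforward — this last point being what reduces the Grassmannian wall-crossing to finitely many wall-crossings, each handled by the $\mathbb{P}^N$ picture where $c_{\epsilon,\epsilon'}$ is an honest contraction.

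The main obstacle I expect is the compatibility of the perfect obstruction theories under $c_{\epsilon,\epsilon'}$, i.e.\ showing that contracting a rational tail and absorbing its torsion into a node does not change the virtual class after pushforward. Controlling this requires a careful analysis of the deformation–obstruction complex of the universal quotient $\oO^{\oplus N}\twoheadrightarrow L$ along the locus where the contraction is non-trivial: one must check that the cone of the comparison map between the two obstruction theories is supported on the exceptional locus and contributes trivially to the pushforward, which is where a relative version of the virtual pushforward / cosection-free degeneration argument will be needed. The construction of $\iota^{\epsilon}$ and the preservation of $\epsilon$-stability under Pl\"ucker, by contrast, I expect to be essentially formal given the definitions.
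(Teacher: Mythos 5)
Your proposal runs into three problems, two of which are genuine gaps.

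First, there is no left vertical arrow in the diagram: for $r>1$ there is no natural morphism $\overline{Q}_{g,m}^{\epsilon}(\mathbb{G}(r,n),d)\to\overline{Q}_{g,m}^{\epsilon'}(\mathbb{G}(r,n),d)$ (this is stated explicitly in Subsection~\ref{subsec:Mor} of the paper, echoing \cite{MOP} and \cite{PR}), so the ``$\mathbb{G}(r,n)$-side contraction'' you invoke in the final step does not exist, and the square you want to commute is not a square at all. This is precisely why the theorem is phrased in terms of $\iota^{\epsilon}_{\ast}$ of the virtual class on the $\mathbb{P}^{N-1}$ side rather than a pushforward on the Grassmannian side. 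Second, the sentence ``at a wall the two virtual classes already agree after pushforward'' is the content of the theorem being proved; invoking it as input makes the final paragraph circular. Third, the hard step you correctly identify --- compatibility of obstruction theories under $c_{\epsilon,\epsilon'}$ --- is not attacked head-on in the paper at all. A direct cone/virtual-pushforward argument for a contraction that collapses rational tails carrying torsion would have to handle nontrivial excess contributions and is not obviously tractable.

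The paper's actual route is by $T$-equivariant virtual localization. One first describes the $T$-fixed loci of $\overline{Q}^{\epsilon}_{g,m}(\mathbb{G}(r,n),d)$ combinatorially as products of quotients of Hassett spaces $\overline{M}^{\epsilon}_{\gamma(v),\valence(v)|\mathbf{s}(v)}$, computes the vertex and edge contributions via the Graber--Pandharipande formula, and expresses them as polynomials in $\psi$-classes and diagonal classes. One then records how these standard classes transform under $c_{\epsilon,\epsilon'}$ on the Hassett space (e.g. $c^{\ast}\widehat{\psi}_j=\widehat{\psi}_j-\Delta_j$) and observes that the Hodge-bundle contributions are $\epsilon$-independent since $c^{\ast}\mathbb{E}\cong\mathbb{E}$. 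Next, in genus zero the moduli spaces are smooth (Lemma~\ref{lem:nonsing}) and connected (Lemma~\ref{lem:eqconn}), so the wall-crossing formula holds there for the classical reason that surjective proper birational maps between irreducible smooth stacks push the fundamental class to the fundamental class; equating localization contributions on both sides in genus zero yields identities between polynomials in $\psi_i$, $\widehat{\psi}_j$, $D_J$, which by the argument of \cite[Lemma~5]{MOP} hold as \emph{abstract} polynomials. Since the genus-dependent Hodge factor is unchanged under $c_{\epsilon,\epsilon'}$, the same abstract identities glue to give the formula for all $g$. So the reduction to genus zero, rather than a direct obstruction-theory comparison, is the key move your proposal is missing.
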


The above theorem, which is a refinement 
of the result in~\cite[Section~7]{MOP},
 is interpreted as a wall-crossing 
formula relevant to the GW theory. 
\subsection{Invariants on Calabi-Yau 3-folds}
The idea of $\epsilon$-stable quotients 
is also applied to define new 
quantum invariants on some compact or non-compact Calabi-Yau 
3-folds. 
One of the interesting examples is 
a system of invariants on 
a quintic Calabi-Yau 3-fold 
$X\subset \mathbb{P}^4$. 
In Section~\ref{sec:enu}, we associate the substack, 
\begin{align*}
\overline{Q}_{0, m}^{\epsilon}(X, d) \subset 
\overline{Q}_{0, m}^{\epsilon}(\mathbb{P}^4, d),
\end{align*}
such that when $\epsilon>2$, it coincides with the moduli space of 
genus zero, degree $d$
stable maps to $X$. 
There is a
perfect obstruction theory on the space
$\overline{Q}_{0, m}^{\epsilon}(X, d)$, 
hence the virtual class, 
\begin{align*}
[\overline{Q}_{0, m}^{\epsilon}(X, d)]^{\rm vir}
\in A_{\ast}(\overline{Q}_{0, m}^{\epsilon}(X, d), \mathbb{Q}), 
\end{align*}
with virtual dimension $m$. 
In particular, the zero-pointed moduli space yields 
the invariant, 
\begin{align*}
N_{0, d}^{\epsilon}(X)=
\int_{[\overline{Q}_{0, 0}^{\epsilon}(X, d)]^{\rm vir}}1 \in \mathbb{Q}. 
\end{align*} 
For $\epsilon>2$, the invariant $N_{0, d}^{\epsilon}(X)$
coincides with the 
GW invariant counting genus zero, degree $d$ stable 
maps to $X$.  
However for a smaller $\epsilon$, the above 
invariant may be different from the GW invariant
of $X$. 
The understanding of 
wall-crossing phenomena of such invariants seem relevant
to the study of the GW theory. 
In Section~\ref{sec:enu}, 
we will also discuss 
such invariants in several other cases. 

\subsection{Relation to other works}
As pointed out in~\cite[Section~1]{MOP},
only a few proper moduli spaces carrying virtual 
classes are known, e.g. stable maps~\cite{BGW}, stable sheaves on 
surfaces or 3-folds~\cite{LTV},~\cite{Thom}, 
Grothendieck's Quot scheme on non-singular curves~\cite{MO}
and MOP-stable quotients~\cite{MOP}. By the result of
Theorem~\ref{thm:main}, we have constructed 
a new family of moduli spaces which have 
virtual classes. 

Before the appearance of stable maps~\cite{Ktor}, the Quot scheme 
was used for an enumeration problem of curves on 
the Grassmannian~\cite{Bert1}, \cite{Bert2}, \cite{BDW}.
Some relationship between 
compactifications as maps (\ref{map})
and quotients (\ref{exseq})
is discussed in~\cite{PR}.
The fiber of the morphism (\ref{QMp}) 
over a non-singular curve 
is an intermediate moduli space
between the above two
compactifications. 
This fact seems to give a new insight to 
the work~\cite{PR}. 

Wall-crossing phenomena for stable maps 
or GW type invariants are 
discussed in~\cite{BH}, \cite{BaMa}, \cite{GuAl}. 
In their works, a stability 
condition
is a weight on the 
marked points, not on maps. 
In particular, there is no wall-crossing 
phenomena if there is no point insertion. 

After the author finished the work of this paper, 
a closely related work
of Mustat$\check{\rm{a}}$-Mustat$\check{\rm{a}}$~\cite{MMu}
was informed to the author.  
 They  construct some compactifications 
 of the moduli space of maps from 
 Riemann surfaces to the projective space, 
 which are interpreted as moduli 
spaces of $\epsilon$-stable quotients of rank one. 
However they do not 
address higher rank quotients, virtual classes
nor wall-crossing formula. 
In this sense, the prewent work
is interpreted as 
a combination of the works~\cite{MOP} and~\cite{MMu}. 

Recently wall-crossing formula of Donaldson-Thomas (DT) 
type invariants have been developed by Kontsevich-Soibelman~\cite{K-S}
and Joyce-Song~\cite{JS}.
The DT invariant is a counting invariant
of stable sheaves on a Calabi-Yau 3-fold, 
while GW invariant is a counting invariant 
of stable maps. 
The relationship between GW invariants and DT invariants
is proposed by Maulik-Nekrasov-Okounkov-Pandharipande
(MNOP)~\cite{MNOP},
called \textit{GW/DT correspondence}.
In the DT side, a number of applications
of wall-crossing formula 
 to the MNOP conjecture 
have been found recently,  
such as \textit{DT/PT-correspondence}, \textit{rationality 
conjecture}. (cf.~\cite{BrH}, \cite{StTh}, \cite{Tcurve1}, \cite{Tolim2}.)
It seems worth trying to find a similar wall-crossing phenomena 
in GW side and give an application to the MNOP conjecture. 
The work of this paper grew out from such an attempt. 

\subsection{Acknowledgement}
The author thanks V. Alexeev for pointing out 
the related work~\cite{MMu}, 
and Y.~Konoshi for the information of the reference~\cite{AMV}. 
This work is supported by World Premier International 
Research Center Initiative (WPI initiative), MEXT, Japan. 
This work is also supported by Grant-in Aid
for Scientific Research grant numbers 22684002 from the 
Ministry of Education, Culture,
Sports, Science and Technology, Japan.

\section{Stable quotients}
In this section we introduce the notion 
of $\epsilon$-stable quotients
for a positive real number $\epsilon\in \mathbb{R}_{>0}$, 
study their properties, and give some examples. 
The $\epsilon$-stable quotients 
are extended notion of stable quotients 
introduced by Marian-Oprea-Pandharipande~\cite{MOP}. 
\subsection{Definition of $\epsilon$-stable quotients}
Let $C$ be a connected 
projective curve over $\mathbb{C}$ with 
at worst nodal singularities. 
Suppose that the arithmetic genus of $C$ is $g$, 
\begin{align*}
g=\dim H^1(C, \oO_C).
\end{align*}
Let $C^{ns}\subset C$ be the non-singular locus of $C$. 
We say the data 
\begin{align*}
(C, p_1, \cdots, p_m), 
\end{align*}
with distinct markings $p_i \in C^{ns}\subset C$
a genus $g$, $m$-pointed, \textit{quasi-stable curve}. 
The notion of quasi-stable quotients is introduced in~\cite[Section~2]{MOP}. 
\begin{defi}\emph{
Let $C$ be a pointed quasi-stable curve and 
$q$ a quotient, 
\begin{align*}
\oO_C^{\oplus n} \stackrel{q}{\twoheadrightarrow} Q.
\end{align*}
We say that $q$ is a \textit{quasi-stable quotient}
if $Q$ is locally free near nodes and markings. 
In particular, the torsion subsheaf $\tau(Q)\subset Q$ satisfies,
\begin{align*}
\Supp \tau(Q) \subset C^{ns} \setminus \{p_1, \cdots, p_m\}.
\end{align*}}
\end{defi}
Let $\oO_C^{\oplus n} \stackrel{q}{\twoheadrightarrow} Q$ be a quasi-stable 
quotient. 
The quasi-stability implies that the sheaf 
$Q$ is perfect, i.e. there is a finite locally free resolution
$P^{\bullet}$ of $Q$. 
In particular, the determinant line bundle, 
\begin{align*}
\det(Q)=\bigotimes_{i}(\bigwedge^{\rk P^{i}}P^i )^{\otimes (-1)^i} \in \Pic(C), \end{align*}
makes sense. 
The degree of $Q$ is defined by the degree of 
$\det(Q)$. 
 We say that a quasi-stable quotient $\oO_C^{\oplus n} \twoheadrightarrow Q$
is of \textit{type $(r, n, d)$}, if the following holds, 
\begin{align*}
\rank Q=n-r, \quad \deg Q=d. 
\end{align*}
For a quasi-stable quotient 
$\oO_C^{\oplus n}\stackrel{q}{\twoheadrightarrow}Q$
and $\epsilon \in \mathbb{R}_{>0}$, the 
$\mathbb{R}$-line bundle $\lL(q, \epsilon)$
is defined by 
\begin{align}\label{def:L}
\lL(q, \epsilon)\cneq \omega_C(p_1 +\cdots +p_m)
\otimes (\det Q)^{\otimes \epsilon}.
\end{align}
The notion of stable quotients introduced in~\cite{MOP}, 
which we call \textit{MOP-stable quotients}, 
is defined as follows. 
\begin{defi}\emph{{\bf \cite{MOP}}
A quasi-stable quotient 
$\oO_C^{\oplus n}\stackrel{q}{\twoheadrightarrow}Q$
is a \textit{MOP-stable quotient} if the $\mathbb{R}$-line 
bundle $\lL(q, \epsilon)$ is ample for every $\epsilon>0$.} 
\end{defi}
The idea of $\epsilon$-stable quotient is that, 
we only require the ampleness of $\lL(q, \epsilon)$
for a fixed $\epsilon$, (not every $\epsilon>0$,)
and put an additional condition on the length of the torsion 
subsheaf of the quotient sheaf. 
\begin{defi}\emph{
Let  
 $\oO_C^{\oplus n}\stackrel{q}{\twoheadrightarrow}
Q$ 
be a quasi-stable quotient and 
$\epsilon$ a positive real number. We say that $q$ is an
\textit{$\epsilon$-stable quotient} if the following conditions 
are satisfied. }
\begin{itemize}
\item \emph{The $\mathbb{R}$-line bundle
$\lL(q, \epsilon)$ 
is ample.}
\item \emph{For any point $p\in C$, 
the torsion subsheaf $\tau(Q)\subset Q$ satisfies 
the following inequality,} 
\begin{align}\label{ineq}
\epsilon \cdot \length \tau(Q)_{p} \le 1. 
\end{align}
\end{itemize}
\end{defi}
Here we give some remarks. 
\begin{rmk}
As we mentioned in the introduction, 
the definition of $\epsilon$-stable quotients
is motivated by Hassett's weighted  
pointed stable curves~\cite{BH}. 
We will discuss the relationship 
between $\epsilon$-stable quotients and 
weighted pointed stable curves in 
Subsection~\ref{subsec:Hassett}. 
\end{rmk}
\begin{rmk}
The ampleness of $\lL(q, \epsilon)$ for 
every $\epsilon>0$ is equivalent to the 
ampleness of $\lL(q, \epsilon)$ for $0<\epsilon \ll 1$. 
If $\epsilon>0$ is sufficiently small, 
then the condition (\ref{ineq}) does not say anything, 
so MOP-stable quotients 
coincide with $\epsilon$-stable quotients for $0<\epsilon \ll 1$. 
\end{rmk}
\begin{rmk}
For a quasi-stable quotient $\oO_C^{\oplus n}\twoheadrightarrow Q$, 
take the exact sequence, 
\begin{align*}
0 \to S \to \oO_C^{\oplus n} \to Q \to 0. 
\end{align*}
The quasi-stability implies that $S$ is locally free. 
By taking the dual of the above exact sequence,
giving a quasi-stable quotient is equivalent to giving 
a locally free sheaf $S^{\vee}$ and a morphism 
\begin{align*}
\oO_C^{\oplus n} \stackrel{s}{\to} S^{\vee}, 
\end{align*} 
which is surjective on nodes and marked points. 
The $\epsilon$-stability is also defined in terms of 
data $(S^{\vee}, s)$. 
\end{rmk}

\begin{rmk}\label{e1}
By definition, a
 quasi-stable quotient $\oO_{C}^{\oplus n}\stackrel{q}{\twoheadrightarrow} Q$
of type $(r, n, d)$ 
induces a rational map, 
\begin{align*}
f\colon C \dashrightarrow \mathbb{G}(r, n), 
\end{align*}
such that we have 
\begin{align}\label{eq:deg}
\deg f_{\ast}[C] +\length \tau(Q)=d. 
\end{align}
If $\epsilon >1$, then the condition (\ref{ineq}) 
is equivalent to that $Q$ is a locally free sheaf. 
Hence $f$ is an actual map, 
and the quotient $q$ is isomorphic to the pull-back 
of the universal quotient on $\mathbb{G}(r, n)$. 
\end{rmk}
Let $C$ be a marked quasi-stable curve. 
A point $p\in C$ is called \textit{special} if 
$p$ is a singular point of $C$ or 
a marked point. For an irreducible 
component $P\subset C$, we denote by $s(P)$ the 
number of special points in $P$. 
The following lemma is obvious. 
\begin{lem}\label{lem:ob}
Let $\oO_C^{\oplus n} \stackrel{q}{\twoheadrightarrow} Q$
be a quasi-stable quotient and take $\epsilon \in \mathbb{R}_{>0}$. 
Then the $\mathbb{R}$-line bundle $\lL(q, \epsilon)$ is 
ample if and only if 
for any irreducible component $P\subset C$
with genus $g(P)$, 
the following condition holds. 
\begin{align}\label{ob1}
&\deg (Q|_{P}) >0, \quad (s(P), g(P))=(2, 0), (0, 1) \\
\label{ob2}
&\deg (Q|_{P})>1/\epsilon, \quad (s(P), g(P))=(1, 0), \\
\label{ob3}
& \deg (Q|_{P})>2/\epsilon, \quad (s(P), g(P))=(0, 0). 
\end{align}
\end{lem}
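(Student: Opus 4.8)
The plan is to reduce the ampleness of the $\mathbb{R}$-line bundle $\lL(q,\epsilon)$ on the nodal curve $C$ to a positivity check on each irreducible component, and then to compute the relevant degrees explicitly. First I would recall that on a (possibly reducible) nodal projective curve, an $\mathbb{R}$-line bundle $L$ is ample if and only if its restriction $L|_P$ has positive degree on every irreducible component $P\subset C$; this is the Nakai--Moishezon criterion for curves, which is insensitive to the $\mathbb{R}$-coefficients since ampleness of an $\mathbb{R}$-divisor on a curve is just positivity of degree on each component. So the whole statement comes down to computing $\deg(\lL(q,\epsilon)|_P)$ for an arbitrary component $P$ and reading off when it is positive.

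Next I would carry out that degree computation. Fix an irreducible component $P\subset C$ of genus $g(P)$, let $\nu\colon \widetilde{P}\to P$ be its normalization, and recall that $\lL(q,\epsilon)=\omega_C(p_1+\cdots+p_m)\otimes(\det Q)^{\otimes\epsilon}$. Restricting to $P$, the adjunction/dualizing-sheaf computation gives $\deg(\omega_C|_P)=2g(P)-2+\#\{\text{nodes of }C\text{ lying on }P\}$, because $\omega_C$ restricted to a component is $\omega_{\widetilde P}$ twisted by the preimages of the nodes on $P$. Adding the marked points contributes $\#\{\text{markings on }P\}$, and together these two node/marking counts are exactly $s(P)$, the number of special points on $P$. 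Writing $\deg(Q|_P)=d_P$, we therefore get
\begin{align*}
\deg\bigl(\lL(q,\epsilon)|_P\bigr)=2g(P)-2+s(P)+\epsilon\,d_P.
\end{align*}
Now I would impose $\deg(\lL(q,\epsilon)|_P)>0$, i.e. $\epsilon\,d_P>2-2g(P)-s(P)$, and tabulate the cases. Since $C$ is a pointed quasi-stable curve, each component has $2g(P)-2+s(P)\ge -2$, with the extreme value $-2$ occurring only for $(s(P),g(P))=(0,0)$; the other borderline cases are $(s(P),g(P))=(1,0)$ giving $2g(P)-2+s(P)=-1$, and $(s(P),g(P))=(2,0)$ or $(0,1)$ giving $2g(P)-2+s(P)=0$, while in all remaining cases $2g(P)-2+s(P)\ge 1>0$ and the inequality holds automatically (note $d_P\ge 0$ since $Q|_P$ is globally generated, being a quotient of $\oO_P^{\oplus n}$). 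Substituting the borderline values of $2g(P)-2+s(P)$ into $\epsilon d_P>2-2g(P)-s(P)$ yields precisely $d_P>0$ in case \eqref{ob1}, $d_P>1/\epsilon$ in case \eqref{ob2}, and $d_P>2/\epsilon$ in case \eqref{ob3}, which is the claim.

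I do not expect any serious obstacle here — as the paper itself says, the lemma is ``obvious'' — but the one point that deserves a careful word rather than a wave of the hand is the degree formula $\deg(\omega_C|_P)=2g(P)-2+\#(\text{nodes on }P)$ together with the identification of the combined node-plus-marking count with $s(P)$; this is where the nodal (as opposed to smooth) nature of $C$ enters, and getting the bookkeeping of special points right is the crux of matching the three displayed inequalities. I would also remark in passing that $d_P\ge 0$, so that the cases with $2g(P)-2+s(P)\ge 0$ only need $\epsilon d_P>0$ handled correctly, which is automatic once $d_P>0$; the degenerate components with $d_P=0$ are exactly the ones excluded by \eqref{ob1}--\eqref{ob3}.
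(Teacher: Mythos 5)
Your proof is correct and follows essentially the same route as the paper's: the paper likewise reduces ampleness to the degree formula $\deg(\lL(q,\epsilon)|_P)=2g(P)-2+s(P)+\epsilon\cdot\deg(Q|_P)$ on each component together with $\deg(Q|_P)\ge 0$ from surjectivity of $q$, and then reads off the borderline cases. You have simply filled in the justifications (Nakai--Moishezon on curves, the dualizing-sheaf computation, the case tabulation) that the paper leaves implicit.
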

\begin{proof}
For an irreducible component $P\subset C$, we have 
\begin{align*}
\deg (\lL(q, \epsilon)|_{P})
=2g(P)-2+s(P)+\epsilon \cdot \deg(Q|_{P}). 
\end{align*}
Also since $q$ is surjective, we have $\deg(Q|_{P})\ge 0$. 
Therefore the lemma follows. 
\end{proof}
Here we give some examples. 
We will discuss some more examples in 
Section~\ref{sec:ex}.
\begin{exam}
(i) Let $C$ be a smooth projective curve 
of genus $g$
and $f\colon C\to \mathbb{G}(r, n)$ a map. 
Suppose that $f$ is non-constant if $g\le 1$. 
By pulling back the universal quotient
\begin{align*}
\oO_{\mathbb{G}(r, n)}^{\oplus n} \twoheadrightarrow 
\qQ_{\mathbb{G}(r, n)}, 
\end{align*}
on $\mathbb{G}(r, n)$, 
we obtain the quotient $\oO_C^{\oplus n} \stackrel{q}{\twoheadrightarrow} Q$. 
It is easy to see that
the quotient $q$ is an $\epsilon$-stable quotient for $\epsilon >2$. 

(ii) Let $C$ be as in (i) and take 
distinct points $p_1, \cdots, p_m \in C$. 
For an effective divisor $D=a_1 p_1 +\cdots a_m p_m$
with $a_i>0$, the quotient
\begin{align*}
\oO_C \stackrel{q}{\twoheadrightarrow} \oO_D,
\end{align*} 
is an $\epsilon$-stable quotient if and only if 
\begin{align*}
2g-2+\epsilon \cdot \sum_{i=1}^{m}a_i>0, \quad 0<\epsilon \le 1/a_i, 
\end{align*}
for all $1\le i\le m$. 
In this case, the quotient $q$ is 
MOP-stable if 
 $g \ge 1$, but this is not the case in genus zero.  

(iii) Let $\mathbb{P}^1 \cong C\subset \mathbb{P}^n$ be a line 
and take distinct points $p_1, p_2 \in C$. 
By restricting the Euler sequence to $C$, we obtain the 
exact sequence, 
\begin{align*}
0 \to \oO_C(-1) \stackrel{s}{\to}
 \oO_C^{\oplus n+1} \to T_{\mathbb{P}^n}(-1)|_{C} \to 0.
\end{align*}
Composing the natural inclusion $\oO_C(-p_1-p_2 -1)\subset \oO_C(-1)$
with $s$, we obtain the exact sequence, 
\begin{align*}
0\to \oO_C(-p_1-p_2-1)\to \oO_C^{\oplus n+1} \stackrel{q}{\to} Q \to 0. 
\end{align*}
It is easy to see that the
 quotient $q$ is $\epsilon$-stable for $\epsilon=1$. 
Note that $q$ is not a MOP-stable quotient
nor a quotient corresponding to a stable map as in (i). 
\end{exam}

\subsection{Moduli spaces of $\epsilon$-stable quotients}
Here we define the moduli functor of the family 
of $\epsilon$-stable quotients. 
We use the language of stacks, and 
readers can refer~\cite{GL} for their introduction. 
First we recall the moduli stack of 
quasi-stable curves. 
For a $\mathbb{C}$-scheme $B$, a \textit{family of
genus $g$, 
$m$-pointed quasi-stable curves over $B$}
is defined to be data
\begin{align*}
(\pi \colon \cC \to B, p_1, \cdots, p_m), 
\end{align*}
which satisfies the following. 
\begin{itemize}
\item The morphism $\pi \colon \cC \to B$ is
flat, proper and locally of finite presentation. 
Its relative dimension is one and $p_1, \cdots, p_m$
are sections of $\pi$. 
\item For each closed point $b\in B$, the data 
\begin{align*}
(\cC_b \cneq \pi^{-1}(b), p_1(b), \cdots, p_m(b)),
\end{align*}
is an $m$-pointed quasi-stable curve. 
\end{itemize}
The families of genus $g$, $m$-pointed 
quasi-stable curves form a groupoid
$\mM_{g, m}(B)$
with the set of isomorphisms, 
\begin{align*}
\Isom_{\mM_{g, m}(B)}((\cC, p_1, \cdots, p_m), 
(\cC', p_1', \cdots, p_m')),
\end{align*}
given by the isomorphisms of schemes over $B$, 
\begin{align*}
\phi \colon \cC \stackrel{\cong}{\to}\cC', 
\end{align*}
satisfying $\phi(p_i)=p_i'$ for each $1\le i\le m$. 
The assignment $B\mapsto \mM_{g, m}(B)$ forms a 2-functor, 
\begin{align*}
\mM_{g, m}\colon \Sch/\mathbb{C} \to (\mathrm{groupoid}), 
\end{align*}
which is known to be an algebraic stack locally of finite type 
over $\mathbb{C}$. 
\begin{defi}\emph{
For a given data 
\begin{align*}
\epsilon \in \mathbb{R}_{>0}, \quad 
(r, n, d)\in \mathbb{Z}^{\oplus 3}, 
\end{align*}
we define the \textit{stack of 
genus $g$, $m$-pointed $\epsilon$-stable quotient of
type $(r, n, d)$} to be the 2-functor, 
\begin{align}\label{2func}
\overline{\qQ}^{\epsilon}_{g, m}(\mathbb{G}(r, n), d) \colon 
\Sch/\mathbb{C} \to (\mathrm{groupoid}), 
\end{align}
which sends a $\mathbb{C}$-scheme $B$ to the 
groupoid whose objects consist of data,
\begin{align}\label{C}
(\pi\colon \cC \to B, p_1, \cdots, p_m, \oO_{\cC}^{\oplus n} 
\stackrel{q}{\twoheadrightarrow} \qQ),
\end{align}
satisfying the following. }
\begin{itemize}
\item \emph{$(\pi \colon \cC \to B, p_1, \cdots, p_m)$
is a family of genus $g$, $m$-pointed quasi-stable curve over $B$. }
\item \emph{$\qQ$ is flat over $B$ such that for any $b\in B$, 
the data 
\begin{align*}
(\cC_b, p_1(b), \cdots, p_m(b), \oO_{\cC_b}^{\oplus n} 
\stackrel{q_b}{\twoheadrightarrow} \qQ_b),
\end{align*}
is an $\epsilon$-stable quotient of type $(r, n, d)$.}
\end{itemize}
\emph{For another object over $B$,
\begin{align}\label{Can}
(\pi'\colon \cC' \to B, p_1', \cdots, p_m', \oO_{\cC'}^{\oplus n} 
\stackrel{q'}{\twoheadrightarrow} \qQ'),
\end{align}
the set of isomorphisms between (\ref{C}) and (\ref{Can}) is given by 
\begin{align*}
\{ \phi \in 
\Isom_{\mM_{g, m}(B)}((\cC, p_1, \cdots, p_m), 
(\cC', p_1', \cdots, p_m')) \colon \Ker(q)=\Ker(\phi^{\ast}(q'))\}.
\end{align*}}
\end{defi}
By the construction, there is an obvious forgetting 
1-morphism, 
\begin{align}\label{forget}
\overline{\qQ}_{g, m}^{\epsilon}(\mathbb{G}(r, n), d)
\to \mM_{g, m}.
\end{align}
The following lemma shows that 
the automorphism groups in 
$\overline{\qQ}_{g, m}^{\epsilon}(\mathbb{G}(r, n), d)$
are finite. 
\begin{lem}\label{lem:aut}
For a genus $g$, $m$-pointed $\epsilon$-stable quotient  
$(\oO_C^{\oplus n} \stackrel{q}{\twoheadrightarrow}  Q)$
of type $(r, n, d)$, 
we have 
\begin{align}\label{aut:fin}
\sharp \Aut(\oO_C^{\oplus n} \stackrel{q}{\twoheadrightarrow} Q) <\infty, 
\end{align}
in the groupoid 
$\overline{\qQ}_{g, m}^{\epsilon}(\mathbb{G}(r, n), d)(\Spec \mathbb{C})$. 
\end{lem}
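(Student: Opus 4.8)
The plan is to bound $\Aut(q)$ by letting it act on the irreducible components of $C$, reducing to a single component, and then rigidifying the unstable rational components using the degree inequalities of Lemma~\ref{lem:ob} together with the torsion bound (\ref{ineq}). First I would unwind the definition: an element of $\Aut(\oO_C^{\oplus n}\stackrel{q}{\twoheadrightarrow}Q)$ is an automorphism $\phi\colon C\stackrel{\cong}{\to}C$ with $\phi(p_i)=p_i$ and $\Ker(q)=\Ker(\phi^{\ast}q)$ inside $\oO_C^{\oplus n}$. Since $\phi$ is an isomorphism this last condition is equivalent to $\phi^{\ast}Q=Q$ as quotients of $\oO_C^{\oplus n}$, so $\phi$ canonically preserves everything functorially attached to $q$: the torsion $\tau(Q)$, the saturated subsheaf $S^{\mathrm{sat}}\cneq\Ker(\oO_C^{\oplus n}\twoheadrightarrow Q/\tau(Q))$, the determinant $\det(Q)$, and hence $\lL(q,\epsilon)$. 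In particular $\Aut(q)\subset\Aut(C;p_1,\dots,p_m)$. Letting $\Aut(q)$ act on the finite set of irreducible components of $C$, I may pass to a finite-index subgroup for which every component $P\subset C$ is fixed; since $C$ is reduced, $\phi\mapsto(\phi|_P)_P$ is then injective, so it suffices to prove that for each component $P$ the image of $\Aut(q)$ in $\Aut(P)$ is finite.

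If $P$ is stable, i.e. $2g(P)-2+s(P)>0$, this is classical: $\Aut(P)$ acts on the finite set of special points of $P$ and the subgroup fixing them pointwise is finite. If $C$ is a smooth elliptic curve with $m=0$, then Lemma~\ref{lem:ob} (case $(s(P),g(P))=(0,1)$) forces $\deg\det Q>0$; since every $\phi$ satisfies $\phi^{\ast}\det Q\cong\det Q$ and the morphism $C\to\Pic^0(C)$, $a\mapsto\tau_a^{\ast}\det Q\otimes(\det Q)^{-1}$, is a nonzero isogeny, only finitely many translations occur, which together with the finiteness of $\Aut(C,o)$ bounds the group. The essential case is $P\cong\mathbb{P}^1$ with $s(P)\le 2$. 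Restricting $q$ to $P$ and saturating, $S^{\mathrm{sat}}|_P$ is a rank $r$ subbundle of $\oO_P^{\oplus n}$ and defines a morphism $f_P\colon P\to\mathbb{G}(r,n)$ of degree $\delta_P\cneq\deg(Q|_P)-\length\tau(Q|_P)\ge 0$; moreover every $\phi_P$ in the image of $\Aut(q)$ satisfies $\phi_P^{\ast}(S^{\mathrm{sat}}|_P)=S^{\mathrm{sat}}|_P$, hence $f_P\circ\phi_P=f_P$, and permutes $\Supp\tau(Q|_P)$. If $\delta_P>0$ then $f_P$ is non-constant, hence finite onto its image, and $\{\phi_P\colon f_P\circ\phi_P=f_P\}$ is finite. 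If $\delta_P=0$ then $Q|_P/\tau(Q|_P)$ is trivial and $f_P$ is constant, so instead I would count torsion: now $\length\tau(Q|_P)=\deg(Q|_P)$, which by Lemma~\ref{lem:ob} exceeds $0$, $1/\epsilon$, $2/\epsilon$ according as $s(P)=2,1,0$, while (\ref{ineq}) bounds $\length\tau(Q)_p\le 1/\epsilon$ at every point; hence $\Supp\tau(Q|_P)$ consists of at least $1$, $2$, $3$ points respectively. Quasi-stability keeps this support disjoint from the $s(P)$ special points of $P$, so in all three cases $P$ carries at least $3$ distinguished points that $\phi_P$ permutes; a finite-index subgroup fixes them all pointwise, and an automorphism of $\mathbb{P}^1$ fixing three distinct points is the identity. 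Intersecting the finitely many finite-index subgroups produced along the way gives a trivial subgroup, proving the finiteness (\ref{aut:fin}).

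The bookkeeping of restricting $q$ to a component $P$ — checking that $\tau(Q|_P)$ agrees with $\tau(Q)|_P$ away from the (good) nodes, that $S^{\mathrm{sat}}|_P$ is again the saturated subbundle, and that $\det$ commutes with this restriction — is routine but requires care, and I expect the genuinely new input to be the final dichotomy on $\mathbb{P}^1$: the thresholds $1/\epsilon$ and $2/\epsilon$ of Lemma~\ref{lem:ob} are matched exactly by the per-point bound $1/\epsilon$ of (\ref{ineq}), so a rational tail with too few special points is always rigidified by its own torsion. That matching is the conceptual crux; everything else is standard curve geometry.
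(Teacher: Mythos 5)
Your proof is correct and takes essentially the same approach as the paper: reduce to an irreducible component, split according to whether the induced rational map to $\mathbb{G}(r,n)$ is constant, and when it is constant use the per-point torsion bound (\ref{ineq}) against the degree thresholds of Lemma~\ref{lem:ob} to produce at least three distinguished points on each unstable rational component. The only cosmetic difference is your isogeny argument in the $(s,g)=(0,1)$ case, where the paper (which treats only $(1,0)$ explicitly) would more simply note that $\deg Q>0$ forces a nonempty torsion support, whose point stabilizer in $\Aut$ of an elliptic curve is already finite.
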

\begin{proof}
It is enough to show that for each irreducible 
component $P\subset C$, we have 
\begin{align*}
\sharp \Aut(\oO_P^{\oplus n} \stackrel{q|_{P}}{\twoheadrightarrow}
 Q|_{P})<\infty.
\end{align*}
Hence we may assume that $C$ is irreducible. 
The cases we need to consider are the following, 
\begin{align*}
(s(C), g(C))=(0, 0), (0, 1), (1, 0), (2, 0). 
\end{align*}
Here we have used the notation in Lemma~\ref{lem:ob}. 
For simplicity we treat the case of $(s(C), g(C))=(1, 0)$. 
The other cases are similarly discussed. 

Let $f$ be a rational map, 
\begin{align*}
f\colon C \dashrightarrow \mathbb{G}(r, n), 
\end{align*}
determined by the quotient $q$. 
(cf.~Remark~\ref{e1}.)
If $f$ is non-constant, then (\ref{aut:fin})
is obviously satisfied. Hence we may assume that $f$ is a 
constant rational map. 
By the equality (\ref{eq:deg}), 
this implies that the torsion subsheaf $\tau(Q) \subset Q$
satisfies 
\begin{align*}
\length \tau(Q)=\deg Q. 
\end{align*} 
Also if $\sharp \Supp \tau(Q) \ge 2$, then 
(\ref{aut:fin}) is satisfied, since 
any automorphism preserves torsion points and special 
points. Hence we may assume that there is 
unique $p\in C$ such that 
\begin{align*}
\length \tau(Q)_{p}=\length \tau(Q)=\deg Q. 
\end{align*}
However this contradicts to the condition (\ref{ineq})
and Lemma~\ref{lem:ob}. 
\end{proof}
We will show the following theorem. 
\begin{thm}\label{thm:rep}
The 2-functor 
$\overline{\qQ}_{g, m}^{\epsilon}(\mathbb{G}(r, n), d)$
is a proper Deligne-Mumford stack
of finite type over $\mathbb{C}$
with a perfect obstruction theory. 
\end{thm}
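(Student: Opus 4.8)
The plan is to realize $\overline{\qQ}_{g,m}^{\epsilon}(\mathbb{G}(r,n),d)$ as an open substack of a relative Quot scheme over the stack of quasi-stable curves, thereby reducing to established representability results, and then to verify properness via the valuative criterion and construct the obstruction theory from the universal complex. First I would recall that the universal quasi-stable curve $\cC \to \mM_{g,m}$ carries a relative (perfect) Quot scheme $\mathrm{Quot}_{\cC/\mM_{g,m}}(\oO^{\oplus n}, n-r, d)$ parametrizing quotients $\oO_{\cC_b}^{\oplus n} \twoheadrightarrow Q$ with the prescribed rank and degree on each fiber; since $\mM_{g,m}$ is an algebraic stack locally of finite type over $\mathbb{C}$ and the fibers are projective nodal curves, this relative Quot space is an algebraic stack locally of finite type over $\mathbb{C}$ (Grothendieck's construction, performed stack-locally on an atlas). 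The quasi-stable condition (local freeness of $Q$ near nodes and markings), the ampleness of $\lL(q,\epsilon)$, and the length inequality $\epsilon \cdot \length \tau(Q)_p \le 1$ are all open conditions in families — local freeness at specified sections is open, ampleness of an $\mathbb{R}$-divisor is open by Lemma~\ref{lem:ob} since it is cut out by finitely many integrality/positivity inequalities on components, and the length bound is open because $\length \tau(Q)_p$ is upper semicontinuous. Hence $\overline{\qQ}_{g,m}^{\epsilon}(\mathbb{G}(r,n),d)$ is an open substack, so it is an algebraic stack locally of finite type over $\mathbb{C}$.

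Next I would establish that it is Deligne-Mumford of finite type. The Deligne-Mumford property is exactly the finiteness of automorphism groups together with unramifiedness of the diagonal; Lemma~\ref{lem:aut} gives finiteness of automorphisms, and the diagonal being unramified follows from a deformation-theoretic computation showing that infinitesimal automorphisms of an $\epsilon$-stable quotient vanish (the tangent space to the automorphism group is $H^0$ of a sheaf that has no sections under the stability inequalities of Lemma~\ref{lem:ob}). Finite type then follows from boundedness: there are only finitely many topological types of quasi-stable curves of genus $g$ with $m$ markings admitting a degree-$d$ quotient subject to (\ref{ob1})--(\ref{ob3}) (the degree on each component is bounded and non-negative, which bounds the number of components), and for each fixed curve the relevant Quot scheme is of finite type; one checks the $\epsilon$-stable locus is quasi-compact by combining these bounds.

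For properness I would use the valuative criterion over a discrete valuation ring $R$ with fraction field $K$: given an $\epsilon$-stable quotient over $\Spec K$, one must produce, after a finite base change, a unique extension over $\Spec R$. The curve side is handled by the stable-reduction-type results for quasi-stable curves (or by passing through $\overline{M}_{g,m}$ via the morphism (\ref{forget}) and then adjusting the semistable model); then one extends the quotient by taking the flat limit of $\Ker(q)$ inside $\oO_{\cC}^{\oplus n}$ over $R$, and modifies the central fiber of the curve (sprouting rational bridges/tails) to absorb any torsion or non-locally-free behavior forced at nodes and markings — the length bound (\ref{ineq}) is precisely what forces this modification to be unique, in the same way that Hassett's weight condition forces unique stable reduction for weighted pointed curves. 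Uniqueness follows because any two extensions agree on the generic fiber and, being $\epsilon$-stable, have no room for distinct limits given the bounded torsion. Finally, the perfect obstruction theory: over the stack $\mM_{g,m}$, the relative obstruction theory is governed by the complex $R\HOM_{\cC}(S, Q)$ where $0 \to S \to \oO_{\cC}^{\oplus n} \to Q \to 0$ is the universal sequence (with $S$ locally free by quasi-stability), which has amplitude in $[0,1]$ on each fiber since $\cC_b$ is a curve; composing with the (perfect, of amplitude $[-1,1]$) cotangent complex of $\mM_{g,m}$ and using that $\mM_{g,m}$ is smooth away from the nodal locus (whose deformations are unobstructed in the appropriate sense) yields an absolute perfect obstruction theory on $\overline{\qQ}_{g,m}^{\epsilon}(\mathbb{G}(r,n),d)$, following the construction in~\cite{MOP} verbatim since it nowhere used the full MOP-stability.

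The main obstacle is the properness argument, specifically the stable-reduction step: one must show that the combination of extending the kernel flatly and modifying the nodal curve yields a limit that is genuinely $\epsilon$-stable (satisfying both the ampleness of $\lL(q,\epsilon)$ on every component of the new central fiber and the length bound at every point) and that this limit is unique. The delicate point is that bubbling off rational components to fix a bad node or marking can itself create new components violating (\ref{ob1})--(\ref{ob3}) unless the bubbling is done minimally, and one needs the length inequality $\epsilon \cdot \length\tau(Q)_p \le 1$ to be exactly the threshold that makes the minimal modification both possible and forced — this is where the analogy with Hassett's theory does the real work, and making it precise in the quotient setting (as opposed to just the curve setting) requires care with how torsion distributes in the flat limit.
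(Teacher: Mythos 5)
Your outline is correct for the Deligne--Mumford, finite type, and perfect obstruction theory parts, and matches the paper's approach for the obstruction theory (relative theory $\dR\pi_{\ast}\hH om(S,Q)^{\ast}$ over $\mM_{g,m}$, absolute theory via the cone). A minor deviation: the paper does not realize the moduli as an open substack of a relative Quot stack over the Artin stack $\mM_{g,m}$, but instead gives an explicit global-quotient presentation $[\qQ/\PGL(V)]$ by embedding the curves via the uniformly very ample bundle $\lL(q,1/l)^{\otimes lk}$ (Lemma~\ref{veryample}) into a fixed $\mathbb{P}(V)$, taking a locally closed subscheme of the relative Quot scheme over a Hilbert scheme, and quotienting; this gives finite type directly without a separate boundedness argument.

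The genuine gap is in your properness argument, and you correctly flag it yourself. You say the right words (``analogy with Hassett's theory,'' ``the length bound forces uniqueness'') but never say how the quotient data is actually converted into Hassett data, which is the whole point. The paper's mechanism is the degenerate-locus construction of Lemma~\ref{lem:div}: dualizing $S\hookrightarrow \oO_C^{\oplus n}$ to $s\colon \oO_C^{\oplus n}\to S^{\vee}$ and precomposing with a \emph{general} $g\in\mathbb{G}(r,n)$ gives $s_g\colon\oO_C^{\oplus r}\to S^{\vee}$, whose degenerate locus is a divisor $Z(s_g)=Z(s)+D_g$ with $D_g$ reduced and disjoint from $\Supp\tau(Q)$ and the markings. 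The $\epsilon$-stability of $q$ is then exactly the statement that $(C,\sum p_i+\epsilon\cdot Z(s_g))$ is an $a(m,d,\epsilon)$-weighted pointed stable curve. This is what lets you run both halves of the valuative criterion concretely: separatedness because the pairs $(\xX_i,\sum p_j^{(i)}+\epsilon\cdot D_i)$ are birational log canonical models over $\Delta$ and hence isomorphic; completeness because the weighted pointed curve extends over $\Delta$ by properness of $\overline{M}_{g,m|d}^{\epsilon}$, the quotient then extends by properness of the relative Quot scheme, and one checks (via the torsion-freeness of $\sS$ and the identity $Z(s_g)=D$ on the central fiber) that the limit quotient is locally free at nodes and markings, has the right ampleness, and satisfies the length bound. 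Without introducing $Z(s_g)$ you have no divisor to attach the weight $\epsilon$ to, so ``the analogy with Hassett'' never becomes an argument; your last paragraph is a frank admission of this, and the missing ingredient is precisely Lemma~\ref{lem:div} and the equality $Z(s_g)=D$ established in the proof of completeness.
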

\begin{proof}
The construction of the moduli space 
and the properness will be postponed in Section~\ref{sec:proof}. 
The existence of the perfect obstruction theory 
will be discussed in Theorem~\ref{thm:vir}. 
\end{proof}

By theorem~\ref{thm:rep},
the 2-functor (\ref{2func})
is interpreted as a geometric object, 
rather than an abstract 2-functor.
In order to emphasize this,  
we slightly change the notation as follows. 
\begin{defi}
\emph{We denote the Deligne-Mumford
moduli stack of genus $g$, 
$m$-pointed $\epsilon$-stable quotients of 
type $(r, n, d)$ by 
\begin{align*}
\overline{Q}_{g, m}^{\epsilon}(\mathbb{G}(r, n), d).
\end{align*}}
\end{defi}
When $r=1$, we occasionally write
\begin{align*}
\overline{Q}_{g, m}^{\epsilon}(\mathbb{P}^{n-1}, d)
\cneq \overline{Q}_{g, m}^{\epsilon}(\mathbb{P}^{n-1}, d).
\end{align*}
The universal curve is denoted by 
\begin{align}\label{univ1}
\pi^{\epsilon} \colon U^{\epsilon}
\to \overline{Q}_{g, m}^{\epsilon}(\mathbb{G}(r, n), d),
\end{align}
and we have the universal quotient,
\begin{align}\label{univ2}
0\to S_{U^{\epsilon}} \to \oO_{U^{\epsilon}}^{\oplus n}
\stackrel{q_{U^{\epsilon}}}{\to} Q_{U^{\epsilon}} \to 0. 
\end{align}
\subsection{Structures of the moduli spaces of $\epsilon$-stable 
quotients}
\label{subsec:Pro}
 Below we discuss some structures on
 the moduli spaces of $\epsilon$-stable 
 quotients. Similar structures for MOP-stable 
 quotients are discussed in~\cite[Section~3]{MOP}.
 
Let $\overline{M}_{g, m}$ be the 
moduli stack of genus $g$, $m$-pointed 
stable curves. 
By composing (\ref{forget}) with the stabilization 
morphism, 
we obtain the proper morphism between Deligne-Mumford 
stacks, 
\begin{align*}
\nu^{\epsilon} \colon 
\overline{Q}_{g, m}^{\epsilon}(\mathbb{G}(r, n), d)
\to \overline{M}_{g, m}. 
\end{align*}
For an $\epsilon$-stable quotient
$\oO_C^{\oplus n} \twoheadrightarrow Q$
with markings $p_1, \cdots, p_m$,  
the sheaf $Q$ is locally free 
at $p_i$. Hence it determines an evaluation map, 
\begin{align}\label{mor:ev}
\ev_i \colon 
\overline{Q}_{g, m}^{\epsilon}(\mathbb{G}(r, n), d)
\to \mathbb{G}(r, n). 
\end{align}
Taking the fiber product, 
\begin{align}\label{fib:pro}
\xymatrix{
\overline{Q}_{g_1, m_1+1}^{\epsilon}(\mathbb{G}(r, n), d_1)
\times_{\ev}\overline{Q}_{g_2, m_2+1}^{\epsilon}(\mathbb{G}(r, n), d_2)
\ar[r]\ar[d] & \overline{Q}_{g_1, m_1+1}^{\epsilon}(\mathbb{G}(r, n), d_1)
\ar[d]^{\ev_{m_1+1}}, \\
\overline{Q}_{g_2, m_2+1}^{\epsilon}(\mathbb{G}(r, n), d_2)
\ar[r]^{\ev_1} & \mathbb{G}(r, n), }
\end{align}
we have the natural morphism, 
\begin{align}\notag
\overline{Q}_{g_1, m_1+1}^{\epsilon}(\mathbb{G}(r, n), d_1)
\times_{\ev}\overline{Q}_{g_2, m_2+1}^{\epsilon}&(\mathbb{G}(r, n), d_2) \\
\label{glue}
&\to \overline{Q}_{g_1+g_2, m_1+m_2}^{\epsilon}(\mathbb{G}(r, n), d_1+d_2),
\end{align}
defined by gluing $\epsilon$-stable quotients 
at the marked points. 
The standard $\GL_{n}(\mathbb{C})$-action 
on $\oO_C^{\oplus n}$ induces an 
$\GL_n(\mathbb{C})$-action on 
$\overline{Q}_{g, m}^{\epsilon}(\mathbb{G}(r, n), d)$, 
i.e. 
\begin{align*}
g\cdot (\oO_C^{\oplus n}\stackrel{q}{\twoheadrightarrow}
Q)=(\oO_C^{\oplus n}\stackrel{q\circ g}{\twoheadrightarrow} Q),
\end{align*}
for $g\in \GL_n(\mathbb{C})$. 
The morphisms (\ref{mor:ev}), (\ref{glue})
are $\GL_n(\mathbb{C})$-equivariant. 
\subsection{Virtual fundamental classes}
The moduli space of $\epsilon$-stable quotients have 
the associated virtual fundamental class. 
The following is an analogue of \cite[Theorem~2, Lemma~4]{MOP}
in our situation.  
\begin{thm}\label{thm:vir}
There is a $\GL_n(\mathbb{C})$-equivariant
 $2$-term perfect obstruction theory 
on $\overline{Q}_{g, m}^{\epsilon}(\mathbb{G}(r, n), d)$. 
In particular there is a 
virtual fundamental class, 
\begin{align*}
[\overline{Q}_{g, m}^{\epsilon}(\mathbb{G}(r, n), d)]^{\rm vir}
\in A_{\ast}^{\GL_n(\mathbb{C})}
(\overline{Q}_{g, m}^{\epsilon}(\mathbb{G}(r, n), d), \mathbb{Q}),
\end{align*}
in the $\GL_n(\mathbb{C})$-equivariant Chow group. 
The virtual dimension is given by 
\begin{align*}
nd +r(n-r)(1-g)+3g-3+m,
\end{align*}
which does not depend on a choice of $\epsilon$. 
\end{thm}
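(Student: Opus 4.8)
The plan is to construct the obstruction theory relatively over the forgetful morphism to the smooth Artin stack $\mM_{g,m}$ of quasi-stable curves and then absolutize it, in direct analogy with \cite[Theorem~2, Lemma~4]{MOP}. Write $X\cneq\overline{Q}_{g,m}^{\epsilon}(\mathbb{G}(r,n),d)$ and let $\nu^{\epsilon}\colon X\to\mM_{g,m}$ be the morphism (\ref{forget}). Since the relative Quot scheme is representable, $\nu^{\epsilon}$ is representable, and its deformation theory is the classical one for quotient sheaves. With $\pi^{\epsilon}\colon U^{\epsilon}\to X$ the universal curve (\ref{univ1}) and $0\to S_{U^{\epsilon}}\to\oO_{U^{\epsilon}}^{\oplus n}\to Q_{U^{\epsilon}}\to 0$ the universal sequence (\ref{univ2}), I would set
\begin{align*}
E^{\bullet}_{\nu}\cneq\bigl(\dR\pi^{\epsilon}_{\ast}\HOM(S_{U^{\epsilon}},Q_{U^{\epsilon}})\bigr)^{\vee}
\end{align*}
and produce a morphism $E^{\bullet}_{\nu}\to\dL_{X/\mM_{g,m}}$ exhibiting a perfect relative obstruction theory, whose $0$-th and $(-1)$-st cohomologies at a point $\oO_C^{\oplus n}\twoheadrightarrow Q$ with kernel $S$ are the duals of $\Hom_C(S,Q)$ and $\Ext^1_C(S,Q)$ respectively. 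Quasi-stability makes $S_{U^{\epsilon}}$ locally free, so $\HOM(S_{U^{\epsilon}},Q_{U^{\epsilon}})=S_{U^{\epsilon}}^{\vee}\otimes Q_{U^{\epsilon}}$ is a coherent sheaf flat over $X$; since $\pi^{\epsilon}$ has relative dimension one, $\Ext^{\ge 2}_C(S,Q)=0$, so $E^{\bullet}_{\nu}$ is perfect of amplitude $[-1,0]$.

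Next I would pass to an absolute obstruction theory: because $\mM_{g,m}$ is smooth of dimension $3g-3+m$, the distinguished triangle $(\nu^{\epsilon})^{\ast}\dL_{\mM_{g,m}}\to\dL_X\to\dL_{X/\mM_{g,m}}\to$ provides a canonical $E^{\bullet}$ with a morphism $\phi^{\epsilon}\colon E^{\bullet}\to\dL_X$ and a triangle $(\nu^{\epsilon})^{\ast}\dL_{\mM_{g,m}}\to E^{\bullet}\to E^{\bullet}_{\nu}\to$, such that $\phi^{\epsilon}$ is again an obstruction theory. The $\GL_n(\mathbb{C})$-equivariance is then automatic: the sheaf $\oO_{U^{\epsilon}}^{\oplus n}$ carries its standard equivariant structure, the universal sequence, $E^{\bullet}_{\nu}$ and $\phi^{\epsilon}$ are all equivariant for the action recalled in Subsection~\ref{subsec:Pro} (with $\mM_{g,m}$ carrying the trivial action), so $\phi^{\epsilon}$ is a $\GL_n(\mathbb{C})$-equivariant obstruction theory, and the Behrend--Fantechi construction produces the virtual class in $A_{\ast}^{\GL_n(\mathbb{C})}(X,\mathbb{Q})$.

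The step I expect to be the main obstacle is verifying that $E^{\bullet}$ is still \emph{two-term}, i.e. perfect of amplitude $[-1,0]$, rather than acquiring a term in degree $1$. The issue is that $\mM_{g,m}$ is an Artin, not Deligne--Mumford, stack, so $\dL_{\mM_{g,m}}$ has a nonzero cohomology sheaf in degree $1$, fibrewise dual to the infinitesimal automorphisms $\Hom_C(\Omega_C(p_1+\cdots+p_m),\oO_C)$ of the pointed curve---and these are genuinely nonzero, since $\epsilon$-stable quotients do contain unstable components (those with $(s(P),g(P))$ among $(2,0)$, $(1,0)$, $(0,1)$, $(0,0)$, as in Lemma~\ref{lem:ob}). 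The way around this is the rigidity supplied by Lemma~\ref{lem:aut} and Theorem~\ref{thm:rep}: $X$ is Deligne--Mumford, so an $\epsilon$-stable quotient has no infinitesimal automorphisms, which means the derivative of the action of $\Hom_C(\Omega_C(p_1+\cdots+p_m),\oO_C)$ on the family of subsheaves $S\subset\oO_C^{\oplus n}$---a linear map to $\Hom_C(S,Q)$---is injective. Dualizing, the connecting homomorphism $h^0(E^{\bullet}_{\nu})\to h^1\bigl((\nu^{\epsilon})^{\ast}\dL_{\mM_{g,m}}\bigr)$ in the long exact sequence of the triangle is surjective, so $h^1(E^{\bullet})=0$; together with $h^{i}(E^{\bullet})=0$ for $i\le -2$ (forced by $E^{\bullet}_{\nu}\in D^{[-1,0]}$ and $\dL_{\mM_{g,m}}\in D^{\ge 0}$), this gives $E^{\bullet}\in D^{[-1,0]}$.

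Finally I would compute the virtual dimension as $\dim\mM_{g,m}+\chi(C,S^{\vee}\otimes Q)=3g-3+m+\chi(C,S^{\vee}\otimes Q)$. Tensoring the universal sequence by the locally free sheaf $S^{\vee}$ gives $\chi(C,S^{\vee}\otimes Q)=n\chi(C,S^{\vee})-\chi(C,\mathcal{E}nd S)$, and Riemann--Roch on the arithmetic genus $g$ nodal curve $C$, using $\deg\mathcal{E}nd S=0$ and $\deg S=-\deg Q=-d$ (from $\det S\otimes\det Q\cong\oO_C$), yields $\chi(C,S^{\vee}\otimes Q)=nd+r(n-r)(1-g)$. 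Hence the virtual dimension is $nd+r(n-r)(1-g)+3g-3+m$, which visibly does not involve $\epsilon$.
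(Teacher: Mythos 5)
Your proposal follows the paper's own argument essentially verbatim: a relative obstruction theory $\bigl(\dR\pi^{\epsilon}_{\ast}\hH om(S_{U^{\epsilon}},Q_{U^{\epsilon}})\bigr)^{\vee}$ over $\nu^{\epsilon}\colon X\to\mM_{g,m}$ (obtained, as in the paper, by adapting the Quot-scheme deformation theory of \cite{CK}, \cite{MO} using that $S_{U^{\epsilon}}$ is locally free), absolutized via the cone construction, with Lemma~\ref{lem:aut} ensuring $E^{\bullet}$ lands in $[-1,0]$, and Riemann--Roch giving the virtual dimension. Your unpacking of \emph{why} Lemma~\ref{lem:aut} forces $h^1(E^{\bullet})=0$ (injectivity of the map from infinitesimal automorphisms of the quasi-stable curve into $\Hom(S,Q)$, hence surjectivity of the dual connecting map) is a correct elaboration of a step the paper states without proof, and the Riemann--Roch computation via $\chi(S^{\vee}\otimes Q)=n\chi(S^{\vee})-\chi(\eE nd\,S)$ is equivalent to the paper's $\chi(S,Q)-\chi(T_C(-\sum p_i))$.
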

\begin{proof}
The same argument of~\cite[Theorem~2, Lemma~4]{MOP}
works. For the reader's convenience, we provide 
the argument. 
For a fixed marked quasi-stable curve, 
\begin{align*}
(C, p_1, \cdots, p_m)\in \mM_{g, m}, 
\end{align*}
the moduli space of $\epsilon$-stable quotients
is an open set of the Quot scheme. 
On the other hand, the deformation theory 
of the Quot scheme
on a non-singular curve
is obtained in
~\cite{CK}, \cite{MO}. 
Noting that any quasi-stable quotient 
is locally free near nodes, the analogues construction 
yields the 2-term 
obstruction theory relative to the forgetting 
1-morphism $\nu$, 
\begin{align*}
\nu \colon \overline{Q}_{g, m}^{\epsilon}(\mathbb{G}(r, n), d)
\to \mM_{g, m}, 
\end{align*}
given by $\dR \pi^{\epsilon}_{\ast}
 \hH om(S_{U^{\epsilon}}, Q_{U^{\epsilon}})^{\ast}$. 
(See~(\ref{univ1}), (\ref{univ2}).)
The absolute obstruction theory is given by the cone
$E^{\bullet}$ of 
the morphism~\cite{BF}, \cite{GP},  
\begin{align*}
\dR \pi^{\epsilon}_{\ast}
 \hH om(S_{U^{\epsilon}}, Q_{U^{\epsilon}})^{\ast}
 \to \nu^{\ast}\mathbb{L}_{\mM_{g, m}}[1], 
\end{align*}
where $\mathbb{L}_{\mM_{g, m}}$ is the 
cotangent complex of the algebraic stack $\mM_{g, m}$. 
By Lemma~\ref{lem:aut}, the complex $E^{\bullet}$ 
is concentrated on $[-1, 0]$. 
Let $\oO_C^{\oplus n} \twoheadrightarrow Q$ be 
an $\epsilon$-stable quotient with kernel $S$ and 
marked points $p_1, \cdots, p_m$. 
By the above description of 
the obstruction theory and the Riemann-Roch theorem, the virtual 
dimension is given by 
\begin{align*}
\chi(S, Q)-\chi(T_{C}(-\sum_{i=1}^m p_i)) 
=nd+r(n-r)(1-g)+3g-3+m. 
\end{align*}
\end{proof}
By the proof of the above theorem, 
the tangent space $\Tan_{q}$ and 
the obstruction space $\Obs_{q}$
at the $\epsilon$-stable quotient 
$q\colon \oO_C^{\oplus n}\twoheadrightarrow Q$
with kernel $S$ and marked points $p_1, \cdots, p_m$
fit into the exact sequence, 
\begin{align}\notag
0 & \to H^0(C, T_C(-\sum_{i=1}^{m}p_i)) \to 
\Hom(S, Q) \to \Tan_{q} \\
\label{tanob}
& \to H^1(C, T_C(-\sum_{i=1}^{m}p_i)) 
\to \Ext^1(S, Q) \to \Obs_{q}\to 0. 
\end{align}
In the genus zero case, the obstruction space
vanishes hence the moduli space is non-singular. 
\begin{lem}\label{lem:nonsing}
The Deligne-Mumford stack
$\overline{Q}_{0, m}^{\epsilon}(\mathbb{G}(r, n), d)$
is non-singular of expected dimension 
$nd+r(n-r)+m-3$. 
\end{lem}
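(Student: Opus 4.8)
The plan is to show that the obstruction space $\Obs_q$ vanishes for every genus zero $\epsilon$-stable quotient $q\colon \oO_C^{\oplus n}\twoheadrightarrow Q$ with kernel $S$ and marked points $p_1,\dots,p_m$; once this is done, the perfect obstruction theory of Theorem~\ref{thm:vir} forces $\overline{Q}_{0,m}^{\epsilon}(\mathbb{G}(r,n),d)$ to be non-singular, and its dimension equals the virtual dimension computed there, namely $nd+r(n-r)(1-g)+3g-3+m = nd+r(n-r)+m-3$ when $g=0$. By the exact sequence (\ref{tanob}), $\Obs_q$ is a quotient of $\Ext^1(S,Q)$, so it suffices to prove $\Ext^1(S,Q)=0$, equivalently $H^1(C,\HOM(S,Q))=0$ since $S$ is locally free (quasi-stability). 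Because $C$ is a genus zero nodal curve, I would argue component by component: writing $\nu\colon \widetilde{C}\to C$ for the normalization and using the partial normalization sequences at the nodes, $H^1(C,\HOM(S,Q))$ is built from the $H^1$ of $\HOM(S,Q)$ restricted to the irreducible components $P\subset C$, each of which is a $\mathbb{P}^1$, together with the (vanishing) $H^1$ of skyscraper contributions at nodes. So the statement reduces to showing $H^1(P,\HOM(S|_P,Q|_P))=0$ for every component $P\cong\mathbb{P}^1$.

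On a fixed component $P\cong\mathbb{P}^1$, the sheaf $\HOM(S|_P,Q|_P)$ is a direct sum of a locally free sheaf $\HOM(S|_P, Q|_P/\tau(Q|_P))$ and terms $\HOM(S|_P,\tau(Q)_p)$ supported at the torsion points; the latter are skyscrapers, hence have no $H^1$. For the locally free part, I would split $Q|_P/\tau(Q|_P)$ and $S|_P$ into line bundles on $\mathbb{P}^1$ and show each summand $\oO_{\mathbb{P}^1}(a_i)\otimes\oO_{\mathbb{P}^1}(b_j)^{\vee}=\oO_{\mathbb{P}^1}(a_i-b_j)$ has vanishing $H^1$, i.e. $a_i-b_j\ge -1$. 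The kernel $S|_P\subset\oO_P^{\oplus n}$ is a sub-bundle of the trivial bundle, so each $b_j\le 0$; meanwhile $S\to\oO_C^{\oplus n}$ being surjective at special points means the relevant evaluation maps of $Q|_P$ away from torsion are globally generated enough that each $a_i\ge 0$, in fact the free quotient $Q|_P/\tau$ is globally generated away from finitely many points, forcing $a_i\ge -\#\{\text{base points}\}$ — here the $\epsilon$-stability inequality (\ref{ineq}) together with Lemma~\ref{lem:ob} controls exactly how much torsion and how few special points a component can have, which is what pins down $a_i-b_j\ge -1$. Concretely, I would use the dual description in the third Remark: $q$ corresponds to $\oO_C^{\oplus n}\xrightarrow{s}S^{\vee}$ surjective at nodes and markings, so $S^{\vee}|_P$ is globally generated at special points, and then $\Ext^1(S,Q)=H^1(P,S^{\vee}|_P\otimes Q|_P)$ is handled by the same splitting argument applied to $S^{\vee}|_P\otimes Q|_P$, whose relevant line-bundle summands again have degree $\ge -1$.

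The main obstacle is the case of an unstable-looking component in genus zero with few special points, namely $(s(P),g(P))=(0,0)$, where Lemma~\ref{lem:ob} only gives $\deg(Q|_P)>2/\epsilon$ and, when $\epsilon$ is small, $Q|_P$ could carry a lot of torsion while $Q|_P/\tau$ has low degree; one must check that the combination of the torsion-length bound (\ref{ineq}) and the degree bound still prevents any line-bundle summand of $S^{\vee}|_P\otimes(Q|_P/\tau)$ from having degree $\le -2$. I expect this to follow from the observation that $S^{\vee}|_P$, being a quotient of $\oO_P^{\oplus n}$, has all summands of non-negative degree, so $S^{\vee}|_P\otimes(Q|_P/\tau)$ has all summands of degree $\ge$ the minimal degree of a summand of $Q|_P/\tau$, and $Q|_P/\tau$ globally generated away from at most one point (by the torsion bound and ampleness) gives that minimal degree $\ge -1$. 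A clean alternative, which I would likely adopt to avoid case analysis, is to invoke the genus zero vanishing directly: $H^1(C,-)$ of any sheaf that is globally generated away from a point on each component, on a tree of $\mathbb{P}^1$'s, vanishes — a standard fact for nodal rational curves — and then simply verify that $\HOM(S,Q)$ (equivalently $S^{\vee}\otimes Q$, away from torsion) satisfies this hypothesis as a consequence of $\epsilon$-stability. Either route gives $\Obs_q=0$, and the lemma follows.
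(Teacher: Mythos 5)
Your proposal is correct in substance and it proves the lemma by the same basic mechanism as the paper: reduce, via the exact sequence (\ref{tanob}), to the vanishing of $\Ext^{1}(S,Q)$ in genus zero, and establish that vanishing by a component-by-component degree estimate on the tree of $\mathbb{P}^1$'s. The difference is in the packaging. The paper first applies Serre duality, rewriting $\Ext^1(S,Q)$ as $H^0(C, S\otimes\widetilde{Q}^{\vee}\otimes\omega_C)^{\ast}$ as in (\ref{genus0}) (the torsion of $Q$ drops out because $\Ext^1$ from a locally free sheaf into a skyscraper vanishes), and then notes that every line-bundle summand of the restriction to a tail has negative degree, so a global section dies on the tails and propagates to zero inward; you instead kill $H^1(C,\hH om(S,\widetilde{Q}))$ directly via the normalization sequence and the standard tree-of-$\mathbb{P}^1$'s induction. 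Both routes work, though yours obliges you to carry out the gluing/induction step explicitly. Two smaller points. First, you spend most of your effort worrying that $\epsilon$-stability is needed to pin down the degrees; it is not. Quasi-stability alone suffices: the composite surjection $\oO_C^{\oplus n}\twoheadrightarrow Q\twoheadrightarrow\widetilde{Q}$ restricts to a surjection onto $\widetilde{Q}|_P$, so every summand of $\widetilde{Q}|_P$ has degree $a_i\ge 0$, while $S|_P\hookrightarrow\oO_P^{\oplus n}$ forces every summand of $S|_P$ to have degree $b_j\le 0$; hence every summand of $S^{\vee}|_P\otimes\widetilde{Q}|_P$ has degree $a_i-b_j\ge 0$, which is exactly the hypothesis your tree-vanishing needs, with no case analysis on $(s(P),g(P))$ and no appeal to (\ref{ineq}). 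Second, your justification that $S^{\vee}|_P$ has summands of non-negative degree because it is ``a quotient of $\oO_P^{\oplus n}$'' is not quite right --- the dual map $\oO_C^{\oplus n}\to S^{\vee}$ is only required to be surjective at nodes and markings --- but the conclusion is correct for the reason just given, namely by dualizing the inclusion of $S|_P$ into the trivial bundle.
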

\begin{proof}
In the notation of the exact sequence (\ref{tanob}), 
it is enough to see that 
\begin{align}\label{genus0}
\Ext^1(S, Q)=H^0(C, S\otimes \widetilde{Q}^{\vee}\otimes \omega_C)^{\ast}=0,
\end{align}
when the genus of $C$ is zero. 
Here $\widetilde{Q}$ is the free part of $Q$,
i.e. $Q/\tau(Q)$ for the torsion subsheaf $\tau(Q)\subset Q$. 
For any irreducible component $P\subset C$
with $s(P)=1$, 
it is easy to see that 
\begin{align*}
\deg(S\otimes \widetilde{Q}^{\ast}\otimes \omega_C)|_{P}<0,
\end{align*}
by Lemma~\ref{lem:ob}. 
Then it is easy to deduce the vanishing (\ref{genus0}). 
\end{proof}

\subsection{Wall-crossing phenomena of $\epsilon$-stable 
quotients}
Here we see that there is a finite number of 
values in $\mathbb{R}_{>0}$ 
so that the moduli spaces of $\epsilon$-stable 
quotients are constant on each interval. 
First we treat the case of 
\begin{align}\label{gm}
(g, m)\neq (0, 0). 
\end{align}
We set 
\begin{align*}
0=\epsilon_{0}<\epsilon_1 <\cdots <\epsilon_d<\epsilon_{d+1}=\infty,
\end{align*}
as follows, 
\begin{align}\label{ei}
\epsilon_{i}=\frac{1}{d-i+1}, \quad 1\le i\le d. 
\end{align}
\begin{prop}\label{prop:wall}
Under the condition (\ref{gm}),  
take $\epsilon \in (\epsilon_{i-1}, \epsilon_i]$
where $\epsilon_i$ is given by (\ref{ei}). 
Then we have 
\begin{align*}
\overline{Q}_{g, m}^{\epsilon}(\mathbb{G}(r, n), d)=
\overline{Q}_{g, m}^{\epsilon_i}(\mathbb{G}(r, n), d).
\end{align*}
\end{prop}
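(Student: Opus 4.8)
The plan is to show that, for a quasi-stable quotient of type $(r,n,d)$ on a $(g,m)$-pointed quasi-stable curve with $(g,m)\neq(0,0)$, being an $\epsilon$-stable quotient is a condition that does not change as $\epsilon$ ranges over the interval $(\epsilon_{i-1},\epsilon_i]$, and that it coincides there with being an $\epsilon_i$-stable quotient. Since $\epsilon$ enters the $2$-functor (\ref{2func}) only through the fiberwise $\epsilon$-stability condition --- the underlying family of quasi-stable curves, the flatness of $\qQ$, and the isomorphism sets being all independent of $\epsilon$ --- this immediately forces the $2$-functors, hence the Deligne--Mumford stacks $\overline{Q}_{g,m}^{\epsilon}(\mathbb{G}(r,n),d)$ and $\overline{Q}_{g,m}^{\epsilon_i}(\mathbb{G}(r,n),d)$, to agree.

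First I would record two integrality bounds for a quasi-stable quotient $\oO_C^{\oplus n}\twoheadrightarrow Q$ of type $(r,n,d)$: for every irreducible component $P\subset C$ and every point $p\in C$,
\begin{align*}
0\le \deg(Q|_P)\le d, \qquad 0\le \length\tau(Q)_p\le \length\tau(Q)\le d.
\end{align*}
The lower bounds and $\length\tau(Q)\le\deg Q$ come from global generation of $Q$ (the free part has nonnegative degree), and $\sum_P\deg(Q|_P)=\deg Q=d$ is the elementary determinant computation of~\cite[Section~2]{MOP}, valid because $Q$ is locally free near the nodes. Next I would isolate the purely arithmetic observation that, for $\epsilon\in(\epsilon_{i-1},\epsilon_i]$ with $\epsilon_i=1/(d-i+1)$ and any integer $N$ with $0\le N\le d$,
\begin{align*}
N>1/\epsilon \iff N\ge d-i+2, \qquad N\le 1/\epsilon \iff N\le d-i+1.
\end{align*}
When $i\ge 2$ this holds because $1/\epsilon\in[d-i+1,d-i+2)$; when $i=1$ one has $1/\epsilon\ge d\ge N$, so the left-hand sides are constantly false and true respectively, which matches the right-hand sides since $N\le d$. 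The crucial point is that the right-hand conditions depend only on $i$, not on the particular $\epsilon$ in the interval, and they are exactly the ones obtained at $\epsilon=\epsilon_i$ (where $1/\epsilon_i=d-i+1$).

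I would then feed these into Lemma~\ref{lem:ob}. The hypothesis $(g,m)\neq(0,0)$ is used to discard case (\ref{ob3}): if an irreducible component $P$ had $s(P)=0$ it would carry no node and no marking, so by connectedness $P=C$, forcing $(g,m)=(g(P),0)=(0,0)$. Hence the ampleness of $\lL(q,\epsilon)$ reduces to (\ref{ob1}), which is $\epsilon$-independent, together with (\ref{ob2}), i.e.\ $\deg(Q|_P)>1/\epsilon$ on every component with $(s(P),g(P))=(1,0)$; applying the arithmetic observation with $N=\deg(Q|_P)$ turns this into $\deg(Q|_P)\ge d-i+2$, independent of $\epsilon\in(\epsilon_{i-1},\epsilon_i]$. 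Similarly the torsion inequality (\ref{ineq}), namely $\length\tau(Q)_p\le1/\epsilon$ for all $p$, becomes $\length\tau(Q)_p\le d-i+1$, again independent of $\epsilon$ in the interval. Thus $\epsilon$-stability and $\epsilon_i$-stability impose literally the same conditions, and the asserted equality of moduli stacks follows.

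The main obstacle, and the step I would treat most carefully, is the left endpoint $i=1$: on $(0,1/d]$ the quantity $1/\epsilon$ is unbounded, so one cannot argue that $\lfloor 1/\epsilon\rfloor$ is constant. This is exactly where the integrality bounds $\deg(Q|_P)\le d$ and $\length\tau(Q)_p\le d$ are indispensable, since they make the two $\epsilon$-dependent conditions degenerate --- respectively to ``no component with $(s(P),g(P))=(1,0)$ exists'' and to ``the torsion inequality is automatic'' --- uniformly over $\epsilon\in(0,1/d]$ and in agreement with $\epsilon=\epsilon_1=1/d$. I would also double-check the wall values $\epsilon=\epsilon_i$, where $1/\epsilon_i=d-i+1$ is an integer and the strict inequality in (\ref{ob2}) meets the non-strict one in (\ref{ineq}); the half-open convention $(\epsilon_{i-1},\epsilon_i]$ is precisely what makes this bookkeeping consistent. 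Finally, I note that the analogous argument with $2/\epsilon$ in place of $1/\epsilon$ is what would be needed in the excluded case $(g,m)=(0,0)$, explaining why that case has a genuinely different wall structure and is handled separately.
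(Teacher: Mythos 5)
Your proposal is correct and follows essentially the same route as the paper: reduce ampleness to the componentwise conditions of Lemma~\ref{lem:ob}, use $(g,m)\neq(0,0)$ to exclude the case (\ref{ob3}), and exploit the integrality bounds $\deg(Q|_P)\le d$ and $\length\tau(Q)_p\le d$ to see that the $\epsilon$-dependent inequalities do not change on $(\epsilon_{i-1},\epsilon_i]$. The only (harmless) difference is presentational: the paper argues the two implications separately (monotonicity in one direction, the integrality/floor argument in the other), whereas you package both into a single statement that the stability conditions are literally identical throughout the interval.
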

\begin{proof}
Let us take a quasi-stable quotient of type $(r, n, d)$, 
\begin{align}\label{takest}
(\oO_C^{\oplus n} \stackrel{q}{\twoheadrightarrow} Q).
\end{align}
First we show that if (\ref{takest}) is $\epsilon$-stable, 
then it is also $\epsilon_i$-stable. 
Since $\epsilon\le \epsilon_i$, the ampleness 
of $\lL(q, \epsilon)$ also implies the ampleness of 
$\lL(q, \epsilon_i)$. 
For $p\in C$, let us denote by $l_p$ the length 
of $\tau(Q)$ at $p$. 
If $l_p\neq 0$, the condition (\ref{ineq}) implies 
\begin{align}\label{ineq2}
0<\epsilon \le \frac{1}{l_p}. 
\end{align}
Since $l_p \le d$, 
the inequality (\ref{ineq2})
also implies $\epsilon_i \le 1/l_p$, which 
in turn implies the condition (\ref{ineq}) for $\epsilon_i$. 

Conversely suppose that the quasi-stable quotient 
(\ref{takest})
is $\epsilon_i$-stable. 
The inequality (\ref{ineq}) for $\epsilon_i$ 
also implies (\ref{ineq}) for $\epsilon$
since $\epsilon \le \epsilon_i$.  
In order to see that $\lL(q, \epsilon)$
is ample, take an irreducible component 
$P\subset C$ and 
check (\ref{ob1}), (\ref{ob2}) and (\ref{ob3}). 
The condition (\ref{ob1}) does not depend on $\epsilon$, 
so (\ref{ob1}) is satisfied. Also the assumption (\ref{gm})
implies that the case (\ref{ob3}) does not occur, 
hence we only have to check (\ref{ob2}). 
We denote by $d_P$ the degree of $Q|_{P}$. 
If $s(P)=1$ and $g(P)=0$, we have 
\begin{align}\label{ineqd}
\epsilon_i>\frac{1}{d_P}, 
\end{align}
by the condition (\ref{ob2}) for $\epsilon_i$. 
Since $d_P\le d$, (\ref{ineqd}) implies 
\begin{align*}
\epsilon>\epsilon_{i-1} \ge \frac{1}{d_P}, 
\end{align*}
which in turn implies the condition (\ref{ob2}) 
for $\epsilon$. Hence (\ref{takest})
is $\epsilon$-stable. 
\end{proof}
Next we treat the case of $(g, m)=(0, 0)$. 
In this case, the moduli space 
is empty for small $\epsilon$. 
\begin{lem}\label{lem:empty}
For $0<\epsilon \le 2/d$, we have 
\begin{align*}
\overline{Q}_{0, 0}^{\epsilon}(\mathbb{G}(r, n), d)=\emptyset. 
\end{align*}
\end{lem}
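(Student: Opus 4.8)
The plan is to suppose for contradiction that there exists a genus zero, unmarked $\epsilon$-stable quotient $\oO_C^{\oplus n} \stackrel{q}{\twoheadrightarrow} Q$ of type $(r,n,d)$ with $0 < \epsilon \le 2/d$, and derive a numerical contradiction from Lemma~\ref{lem:ob} together with the length condition (\ref{ineq}). First I would observe that since $(g,m) = (0,0)$, every irreducible component $P \subset C$ that is a leaf of the dual graph has $s(P) \le 1$; more precisely, because $C$ is a connected nodal curve of arithmetic genus zero, its dual graph is a tree, so there is at least one component $P$ with exactly one special point, i.e. $(s(P), g(P)) = (1,0)$, and if $C$ is irreducible then $(s(C), g(C)) = (0,0)$.

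Next I would run the degree bookkeeping. Write $d_P = \deg(Q|_P)$, so that $\sum_P d_P = d$ (the sum over all irreducible components, using that $\det Q$ restricted to a node contributes nothing). For a leaf component $P$ with $(s(P), g(P)) = (1,0)$, Lemma~\ref{lem:ob}, inequality (\ref{ob2}), gives $d_P > 1/\epsilon \ge d/2$. If $C$ is irreducible, (\ref{ob3}) gives $d_P = d > 2/\epsilon \ge d$, an immediate contradiction. If $C$ is reducible, its tree has at least two leaves, so summing $d_P > d/2$ over at least two leaf components already forces $\sum_P d_P > d$, contradicting $\sum_P d_P = d$; here one must also note $d_P \ge 0$ on all remaining components by surjectivity of $q$. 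The only remaining possibility to rule out is that $C$ has exactly one leaf, which cannot happen for a tree with more than one vertex, and the one-vertex case is the irreducible case already handled.

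I should also handle the degenerate low-degree cases carefully: if $d = 0$ then $2/d$ is vacuous, but in fact an arithmetic genus zero quasi-stable quotient of degree $0$ with no markings would force $\lL(q,\epsilon)|_P = \oO_P(2g(P)-2+s(P))$ to be ample on a component with $s(P) \le 2$, $g(P) = 0$, which fails, so the moduli space is empty there too and the statement is vacuously consistent. The main obstacle is simply being precise about the combinatorics of the dual tree — ensuring that "at least two leaf components, each of degree exceeding $d/2$" is airtight, including the edge cases where a component might be a rational curve with two nodes but no markings (forcing $d_P > 2/\epsilon$ there, which only strengthens the contradiction). Once the tree structure is pinned down, the argument is a one-line inequality: $d = \sum_P d_P \ge (\text{number of leaves}) \cdot (d/2) \ge d$, with strict inequality somewhere, which is absurd.
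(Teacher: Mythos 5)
Your proof is correct, but it is considerably more elaborate than the paper's. The paper uses only the crudest necessary condition for ampleness of $\lL(q,\epsilon)$, namely that its total degree on $C$ must be positive: $\deg \lL(q,\epsilon) = 2g-2+m+\epsilon\cdot d > 0$, which with $g=m=0$ forces $\epsilon > 2/d$ in one line. You instead invoke the component-by-component characterization from Lemma~\ref{lem:ob}, decompose the dual graph (correctly noting that arithmetic genus zero forces it to be a tree), and derive the contradiction $\sum_P d_P > d$ from the leaf inequality $d_P > 1/\epsilon \ge d/2$ together with the irreducible case via condition (\ref{ob3}). This is a legitimate alternative, and it has the minor virtue of making visible exactly which components are ``too cheap'' to support the stability; but it buys nothing that the global degree inequality doesn't already give, and it requires you to be careful about the tree combinatorics and the degree bookkeeping that the paper's argument sidesteps entirely. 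Note also that the torsion-length condition (\ref{ineq}) never actually enters either argument here, so your opening remark that you would ``derive a contradiction from Lemma~\ref{lem:ob} together with the length condition'' overstates what is used.
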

\begin{proof}
If $\overline{Q}_{g, m}^{\epsilon}(\mathbb{G}(r, n), d)$
is non-empty, the ampleness of $\lL(q, \epsilon)$ yields, 
\begin{align*}
2g-2+m+\epsilon \cdot d>0. 
\end{align*}
Hence if $g=m=0$, the $\epsilon$ should 
satisfy $\epsilon >2/d$. 
\end{proof}
Let $d'\in \mathbb{Z}$ be the integer part of 
$d/2$.  
We set $0=\epsilon_0<\epsilon_1<\cdots$ in the 
following way. 
\begin{align}\notag
&\epsilon_1=2, \ \epsilon_2=\infty, \quad (d=1) \\
\label{d:odd}
&\epsilon_1=\frac{2}{d}, \
\epsilon_i=\frac{1}{d'-i+2}, \ (2\le i\le d'+1), \
\epsilon_{d'+2}=\infty, \quad ( d\ge 3 \mbox{ is odd.}) \\
\label{d:even}
& \epsilon_i=\frac{1}{d'-i+1}, \ (1\le i\le d'), \ 
\epsilon_{d'+1}=\infty, \quad ( d \mbox{ is even.}) 
\end{align}
We have the following. 
\begin{prop}\label{wall2}
For $\epsilon_{\bullet}$ as above, we have 
\begin{align*}
\overline{Q}_{0, 0}^{\epsilon}(\mathbb{G}(r, n), d)=
\overline{Q}_{0, 0}^{\epsilon_i}(\mathbb{G}(r, n), d),
\end{align*}
for $\epsilon \in (\epsilon_{i-1}, \epsilon_i]$.
\end{prop}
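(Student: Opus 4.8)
The plan is to mirror the proof of Proposition~\ref{prop:wall}, showing that for $\epsilon, \epsilon'$ in the same half-open interval $(\epsilon_{i-1}, \epsilon_i]$, a quasi-stable quotient of type $(r,n,d)$ on a genus zero unpointed curve is $\epsilon$-stable if and only if it is $\epsilon'$-stable. By Lemma~\ref{lem:empty} we may assume $\epsilon_i > 2/d$, so only intervals lying above $2/d$ need to be considered, which is exactly why the list (\ref{d:odd}), (\ref{d:even}) starts the way it does. First I would fix $\epsilon \le \epsilon'$ in the interval and take a quasi-stable quotient $(\oO_C^{\oplus n} \stackrel{q}{\twoheadrightarrow} Q)$. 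The implication `$\epsilon$-stable $\Rightarrow$ $\epsilon'$-stable' that uses only monotonicity—ampleness of $\lL(q,\epsilon)$ forces ampleness of $\lL(q,\epsilon')$ since $\epsilon \le \epsilon'$, and the torsion bound (\ref{ineq}) for $\epsilon'$ needs $\epsilon' \le 1/l_p$, which follows from $\epsilon \le 1/l_p$ together with $l_p \le d$ forcing $\epsilon' \le \epsilon_i \le 1/l_p$ provided $\epsilon_i \le 1/l_p$—requires knowing that whenever $l_p \neq 0$ on a component appearing here, $l_p$ is small enough; since on a genus zero curve each component $P$ has $s(P) \le 2$, and the relevant torsion-carrying components have $d_P \le d$, the arithmetic works out exactly as in Proposition~\ref{prop:wall}.

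The genuinely new feature compared with Proposition~\ref{prop:wall} is that now components with $(s(P), g(P)) = (0,0)$ are allowed, so the inequality (\ref{ob3}), $\deg(Q|_P) > 2/\epsilon$, must be handled alongside (\ref{ob1}) and (\ref{ob2}). For the converse direction, suppose the quotient is $\epsilon_i$-stable; I would again check (\ref{ob1})--(\ref{ob3}) for $\epsilon$ on each irreducible component $P \subset C$, writing $d_P = \deg(Q|_P)$. Condition (\ref{ob1}) is $\epsilon$-independent. For (\ref{ob2}) with $(s(P),g(P)) = (1,0)$ the argument is verbatim Proposition~\ref{prop:wall}: $\epsilon_i > 1/d_P$ with $d_P \le d$ gives $\epsilon > \epsilon_{i-1} \ge 1/d_P$. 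For (\ref{ob3}) with $(s(P),g(P)) = (0,0)$, since $C$ has genus zero and is connected, a component with $s(P) = 0$ forces $C = P$, i.e. $C$ is a single smooth rational curve with no special points, and then $d_P = d$; the condition is $\epsilon \cdot d > 2$. Here the key point is that the first wall above $0$ is placed precisely at $\epsilon_1 = 2/d$ (for even $d$, or for $d = 1$ it is at $2$), so any interval $(\epsilon_{i-1}, \epsilon_i]$ with $\epsilon_{i-1} \ge 2/d$ satisfies $\epsilon > 2/d$, giving (\ref{ob3}) for $\epsilon$.

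The one subtlety—and what I expect to be the main bookkeeping obstacle—is the case of \emph{odd} $d$. When $d$ is odd a single rational component can carry torsion of length $l_p$ at a point $p$ with $2l_p \le d + 1$, and the degree $d_P$ on a one-special-point component can be as large as $d$ but the torsion lengths on it are constrained by (\ref{ineq}); moreover the candidate wall from (\ref{ob3}) on the whole curve, $\epsilon d/2 = 1$ i.e. $\epsilon = 2/d$, and the candidate walls from torsion, $\epsilon = 1/l_p$ for $l_p \le (d-1)/2$ together with $l_p = (d+1)/2$ (which is excluded since then $\epsilon \le 2/(d+1) < 2/d$), must be reconciled with the list (\ref{d:odd}). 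I would verify by a direct finite check that the only values of $\epsilon$ at which the stability condition can change are exactly the $\epsilon_i$ in (\ref{d:odd}): the torsion walls are $1/\ell$ for $1 \le \ell \le d' = \lfloor d/2 \rfloor$ (giving $\epsilon_i = 1/(d' - i + 2)$ for $2 \le i \le d'+1$), together with the ampleness wall $2/d$, and nothing else can occur because a torsion length $\ell > d'$ is incompatible with ampleness on the ambient genus zero curve. Once this enumeration is confirmed, the `only if' direction reduces on each component to the same monotone-arithmetic comparison as above, and the proposition follows. I would organize the write-up by first disposing of $d = 1$ directly, then treating even and odd $d$ in parallel with the component-by-component check, flagging the odd case as the one requiring the extra inequality $l_p \le d'$.
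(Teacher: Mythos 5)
Your proposal is correct and follows exactly the route the paper takes: the paper's own proof of this proposition is only a two-sentence sketch (reduce to $\epsilon_{i-1}\ge 2/d$ via Lemma~\ref{lem:empty}, then rerun the argument of Proposition~\ref{prop:wall} with condition (\ref{ob3}) added), and your write-up supplies precisely the details it leaves to the reader, including the correct identification of the wall values in (\ref{d:odd}), (\ref{d:even}). One harmless slip: a connected genus zero nodal curve can certainly have components with $s(P)\ge 3$ (e.g.\ a central component of a star-shaped tree), so your parenthetical claim that $s(P)\le 2$ is false, but nothing in your argument uses it, since components with $2g(P)-2+s(P)>0$ impose no condition and the torsion bound only needs $1/l_p$ to be either a wall value or below $\epsilon_1$.
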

\begin{proof}
By Lemma~\ref{lem:empty}, 
we may assume that $\epsilon_{i-1}\ge 2/d$.
Then we can follow the essentially same argument of
Proposition~\ref{prop:wall}.   
The argument is more subtle since we have to take 
 the condition (\ref{ob3}) into consideration,
  but we leave the detail to the reader. 
\end{proof}
Let $\overline{M}_{g, m}(\mathbb{G}(r, n), d)$ 
be the moduli space of genus $g$, $m$-pointed stable maps
$f\colon C \to \mathbb{G}(r, n)$, satisfying 
\begin{align*}
f_{\ast}[C]=d \in H_2(\mathbb{G}(r, n), \mathbb{Z})\cong \mathbb{Z}. 
\end{align*}
(cf.~\cite{Ktor}.)
Also we denote by $\overline{Q}_{g, m}(\mathbb{G}(r, n), d)$
the moduli space of MOP-stable quotients
of type $(r, n, d)$,  
constructed in~\cite{MOP}. 
By the following result, we see that 
both moduli spaces are related by 
wall-crossing phenomena
of $\epsilon$-stable quotients. 
\begin{thm}\label{thm:cross}
(i) For $\epsilon>2$, we have 
\begin{align}\label{isom1}
\overline{Q}_{g, m}^{\epsilon}(\mathbb{G}(r, n), d)
\cong 
\overline{M}_{g, m}(\mathbb{G}(r, n), d).
\end{align}
(ii) For $0<\epsilon \le 1/d$, we have 
\begin{align}
\label{isom2}
\overline{Q}_{g, m}^{\epsilon}(\mathbb{G}(r, n), d)
\cong 
\overline{Q}_{g, m}(\mathbb{G}(r, n), d). 
\end{align}
\end{thm}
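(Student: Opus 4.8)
The plan is to establish both isomorphisms by comparing the stability conditions directly on the level of quasi-stable quotients, since all the moduli problems in question carry the same notion of family and differ only in the pointwise stability constraints. For part (i), with $\epsilon>2$, I would first invoke Remark~\ref{e1}: since $\epsilon>1$, condition~(\ref{ineq}) forces $Q$ to be locally free, so $\tau(Q)=0$ and the rational map $f\colon C\dashrightarrow \mathbb{G}(r,n)$ is an honest morphism with $f_\ast[C]=d$ and $q$ the pull-back of the universal quotient. Thus an $\epsilon$-stable quotient of type $(r,n,d)$ is precisely a map $f\colon C\to\mathbb{G}(r,n)$ of degree $d$ from a pointed quasi-stable curve, and the data $(C,p_1,\dots,p_m,q)$ is equivalent to $(C,p_1,\dots,p_m,f)$. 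It then remains to check that the ampleness of $\lL(q,\epsilon)=\omega_C(p_1+\cdots+p_m)\otimes(\det f^\ast\qQ_{\mathbb{G}(r,n)})^{\otimes\epsilon}$ for $\epsilon>2$ is equivalent to Kontsevich stability of $f$. Using Lemma~\ref{lem:ob}, ampleness translates into: $\deg(f^\ast\qQ|_P)>0$ when $(s(P),g(P))=(2,0)$ or $(0,1)$; $\deg(f^\ast\qQ|_P)>1/\epsilon$ when $(s(P),g(P))=(1,0)$; and $\deg(f^\ast\qQ|_P)>2/\epsilon$ when $(s(P),g(P))=(0,0)$. Since $\deg(f^\ast\qQ|_P)$ is a non-negative integer and $1/\epsilon<1/2$, $2/\epsilon<1$, these read exactly $\deg(f^\ast\qQ|_P)\ge1$ for unstable rational components — which is precisely the condition that $f$ be non-constant on every such component, i.e. $f$ has finitely many automorphisms. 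This matches the definition of a stable map, and one checks the isomorphisms of families compatibly, giving~(\ref{isom1}).

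For part (ii), with $0<\epsilon\le 1/d$, the key observation is that $\length\tau(Q)\le d$ always (by~(\ref{eq:deg}), since $\deg f_\ast[C]\ge 0$), so $\epsilon\cdot\length\tau(Q)_p\le \epsilon\cdot d\le 1$ automatically; thus condition~(\ref{ineq}) is vacuous. Likewise, in Lemma~\ref{lem:ob} the thresholds become $\deg(Q|_P)>1/\epsilon\ge d$ and $\deg(Q|_P)>2/\epsilon\ge 2d$; but $\deg(Q|_P)\le d$, so these can only be satisfied in the degenerate way that no such component occurs — equivalently, ampleness of $\lL(q,\epsilon)$ for $\epsilon\le 1/d$ forces the same numerical constraints as ampleness of $\lL(q,\epsilon')$ for all small $\epsilon'>0$, which is the MOP-stability condition by the Remark following its definition. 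Hence $\epsilon$-stability for $\epsilon\le 1/d$ coincides with MOP-stability, and the two moduli functors are literally the same, giving~(\ref{isom2}). I would phrase this carefully as an equality of stability conditions on quasi-stable quotients and then conclude the isomorphism of the associated $2$-functors, hence of the Deligne-Mumford stacks.

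The only genuinely delicate point, and the one I expect to require the most care, is part (i): verifying that the collection of inequalities from Lemma~\ref{lem:ob} at $\epsilon>2$ reproduces exactly Kontsevich's stability — in particular handling components $P$ with $(s(P),g(P))=(0,0)$ (a whole rational tail contracted to a point would need $\deg>2/\epsilon$, forcing $\deg\ge1$, i.e. $f$ non-constant there) and being attentive to the case of a single rational component with one special point. One must also confirm that the stabilization of the source curve implicit in $\overline{M}_{g,m}(\mathbb{G}(r,n),d)$ agrees with the quasi-stable curve underlying the quotient; but since $\epsilon$-stability already rules out the destabilizing configurations, no contraction is needed and the source curves match on the nose. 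The family-level compatibility (flatness of $\qQ$, the evaluation maps, the universal quotient being a pull-back in families) is then a routine check using the universal property of $\mathbb{G}(r,n)$ and the fact that local freeness of $Q$ is an open condition.
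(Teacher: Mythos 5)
Your proposal is correct and follows essentially the same route as the paper: for (i) use the fact that $\epsilon>1$ forces $Q$ locally free (Remark~\ref{e1}) so the quotient is the pull-back of the universal quotient along an honest map, and then match the ampleness of $\lL(q,\epsilon)$ for $\epsilon>2$ with Kontsevich stability via Lemma~\ref{lem:ob}; for (ii) observe that $\length\tau(Q)\le d$ makes (\ref{ineq}) vacuous and that the thresholds $1/\epsilon\ge d$, $2/\epsilon\ge 2d$ in Lemma~\ref{lem:ob} reduce $\epsilon$-stability to MOP-stability. The only cosmetic differences are that the paper first normalizes to $\epsilon=3$ via Propositions~\ref{prop:wall} and~\ref{wall2} and phrases the stability comparison through the line bundle (\ref{amp:imp}), neither of which changes the substance.
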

\begin{proof}
(i) First take an $\epsilon$-stable 
quotient $\oO_C^{\oplus n}\stackrel{q}{\twoheadrightarrow} Q$
for some $\epsilon>2$, 
with marked points $p_1, \cdots, p_m$.
 By Proposition~\ref{prop:wall}
and Proposition~\ref{wall2}, we may take 
$\epsilon=3$. 
The condition (\ref{ineq})
implies that $Q$ is locally free, hence 
$q$ determines a map, 
\begin{align}\label{stabmap}
f\colon C \to \mathbb{G}(r, n). 
\end{align}
Also the ampleness of $\lL(q, 3)$ is equivalent to the 
ampleness of 
the line bundle 
\begin{align}\label{amp:imp}
\omega_C(p_1+\cdots p_m)\otimes f^{\ast}O_{G}(3),
\end{align}
where $\oO_G(1)$ is the 
restriction of $\oO(1)$
to $\mathbb{G}(r, n)$ via the Pl$\ddot{\rm{u}}$cker
embedding. 
The ampleness of (\ref{amp:imp}) implies 
that the map $f$ is a stable map. 

Conversely take an $m$-pointed stable map,
\begin{align*}
f\colon C\to \mathbb{G}(r, n), \quad 
p_1, \cdots, p_m \in C, 
\end{align*} 
and a quotient $\oO_C^{\oplus n} \stackrel{q}{\twoheadrightarrow} Q$
by pulling back the universal quotient on $\mathbb{G}(r, n)$
via $f$. Then the stability of the map $f$
implies the ampleness of the line bundle (\ref{amp:imp}), 
hence the ampleness of $\lL(q, 3)$. 
Also the condition (\ref{ineq}) is automatically 
satisfied for $\epsilon=3$ since $Q$ is locally free.
Hence we obtain the isomorphism (\ref{isom1}). 

(ii)  
If $(g, m)=(0, 0)$, then both sides of (\ref{isom2}) are empty,
so we may assume that $(g, m)\neq (0, 0)$. 
Let us take an $\epsilon$-stable quotient 
$\oO_C^{\oplus n}\stackrel{q}{\twoheadrightarrow} Q$
for $0<\epsilon \le 1/d$. 
For any irreducible component $P\subset C$, we have 
$\deg(Q|_{P})\le d$. By Lemma~\ref{lem:ob}, this implies that 
there is no irreducible component $P\subset C$
with 
\begin{align*}
(s(P), g(P))=(0, 0) \mbox{ or }(0, 1).
\end{align*}
Hence applying Lemma~\ref{lem:ob} 
again, we see that $q$ is MOP-stable. 

Conversely take a
MOP-stable quotient 
$\oO_C^{\oplus n}\stackrel{q}{\twoheadrightarrow} Q$
and $0<\epsilon \le 1/d$. By the definition of MOP-stable quotient, 
the line bundle $\lL(q, \epsilon)$ is ample. 
Also for any point $p\in C$, the length of the torsion part of 
$Q$ is less than or equal to $d$. 
(cf.~Remark~\ref{e1}). 
Hence the condition (\ref{ineq}) is satisfied and $q$ is 
$\epsilon$-stable. Therefore the desired isomorphism (\ref{isom2}) 
holds. 
\end{proof}
\subsection{Morphisms between moduli spaces of $\epsilon$-stable 
quotients}\label{subsec:Mor}
In this subsection, 
we construct some natural morphisms 
between moduli spaces of $\epsilon$-stable quotients. 
The first one is an analogue of the
Pl$\ddot{\rm{u}}$cker embedding.
(See~\cite[Section~5]{MOP} for the corresponding
morphism between MOP-stable quotients.)
\begin{lem}
There is a natural morphism, 
\begin{align}\label{iota}
\iota^{\epsilon} \colon 
\overline{Q}_{g, m}^{\epsilon}(\mathbb{G}(r, n), d) 
\to \overline{Q}_{g, m}^{\epsilon}(
\mathbb{G}(1, \dbinom{n}{r}), d).
\end{align}
\end{lem}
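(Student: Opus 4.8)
The idea is to mimic the classical Plücker embedding $\mathbb{G}(r,n)\hookrightarrow\mathbb{P}(\wedge^r\mathbb{C}^n)=\mathbb{G}(1,\binom nr)$ at the level of universal quotients. Recall from the excerpt that an $\epsilon$-stable quotient on $C$ is equivalent (Remark after Lemma~\ref{lem:ob}) to a locally free subsheaf $S\subset\oO_C^{\oplus n}$ of rank $r$, or dually to a morphism $\oO_C^{\oplus n}\to S^\vee$ surjective at nodes and markings. Given such $S\hookrightarrow\oO_C^{\oplus n}$ of rank $r$ and degree $-d$, I would form $\wedge^r S\hookrightarrow\wedge^r\oO_C^{\oplus n}=\oO_C^{\oplus\binom nr}$, which is a line subbundle of degree $-d$, and let $Q'$ be its cokernel. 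First I would check that $\wedge^r S\to\oO_C^{\oplus\binom nr}$ is a subbundle inclusion (i.e.\ the cokernel is locally free) exactly where $S\subset\oO_C^{\oplus n}$ is a subbundle, which by quasi-stability holds at nodes and markings; so $\oO_C^{\oplus\binom nr}\twoheadrightarrow Q'$ is a quasi-stable quotient of type $(1,\binom nr,d)$, with $\det Q'=\det(\wedge^r S)^\vee=(\det S)^\vee=\det Q$.

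The crucial point is that $\epsilon$-stability is preserved. Since $\det Q'=\det Q$, the $\mathbb{R}$-line bundle $\lL(q',\epsilon)=\omega_C(p_1+\cdots+p_m)\otimes(\det Q')^{\otimes\epsilon}$ equals $\lL(q,\epsilon)$, so its ampleness is immediate. For the torsion bound (\ref{ineq}), I would argue that the torsion subsheaf $\tau(Q')$ is, at each point $p$, a quotient of $\tau(Q)$ of length at most $\length\tau(Q)_p$ — indeed on the locus where $S$ is a subbundle the map $C\dashrightarrow\mathbb{G}(r,n)$ is defined and composing with the genuine morphism given by Plücker shows $\length\tau(Q')_p\le\length\tau(Q)_p$, so $\epsilon\cdot\length\tau(Q')_p\le1$. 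Hence $q'$ is $\epsilon$-stable of type $(1,\binom nr,d)$ on the same pointed curve $(C,p_1,\dots,p_m)$.

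Next I would carry this out in families: over a base $B$, given the universal data $(\pi\colon\cC\to B,p_\bullet,\oO_\cC^{\oplus n}\twoheadrightarrow\qQ)$ with kernel $\sS$ a rank-$r$ bundle on $\cC$, I set $\sS'=\wedge^r\sS\hookrightarrow\wedge^r\oO_\cC^{\oplus n}=\oO_\cC^{\oplus\binom nr}$ and $\qQ'=\Cok$. Flatness of $\qQ'$ over $B$ follows since $\sS'$ is a line bundle (hence $B$-flat) and the quotient of $B$-flat sheaves with $B$-flat kernel is $B$-flat; the fibrewise computation above shows each fibre is an $\epsilon$-stable quotient of type $(1,\binom nr,d)$. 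This defines the 1-morphism of 2-functors, and since formation of $\wedge^r$ and of cokernels commutes with base change it respects pullbacks, hence descends to a morphism $\iota^\epsilon$ of the Deligne–Mumford stacks; compatibility with isomorphisms is clear because an isomorphism of pointed curves carrying $\Ker q$ to $\Ker q'$ carries $\wedge^r\Ker q$ to $\wedge^r\Ker q'$.

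The main obstacle I anticipate is the torsion estimate $\length\tau(Q')_p\le\length\tau(Q)_p$: away from the subbundle locus one must control how $\wedge^r$ interacts with torsion, e.g.\ by writing $S\hookrightarrow\oO_C^{\oplus n}$ locally and computing the elementary divisors, and then checking that passing to top exterior powers only decreases (the sum of) the relevant lengths. Once that local computation is in hand, everything else is a routine family-version of the Plücker construction. (If one prefers, the whole map can instead be defined functorially via the Grassmann-bundle description, where $\iota^\epsilon$ is the restriction of the relative Plücker embedding $\mathbb{G}(r,n)\hookrightarrow\mathbb{G}(1,\binom nr)$ composed with the moduli map — but one still needs the torsion bound to land in the $\epsilon$-stable locus.)
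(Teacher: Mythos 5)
Your construction is exactly the paper's: pass to $\wedge^r S\hookrightarrow\wedge^r\oO_C^{\oplus n}=\oO_C^{\oplus\binom{n}{r}}$ and take the cokernel; the paper states the preservation of $\epsilon$-stability as "easy to see," while you supply the details (equality of determinants, hence of $\lL(q,\epsilon)$, and the local elementary-divisor computation, which in fact gives $\length\tau(Q')_p=\length\tau(Q)_p$, so your inequality holds). The proposal is correct and takes essentially the same approach as the paper.
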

\begin{proof}
For a quasi-stable quotient $\oO_C^{\oplus n}\stackrel{q}{\twoheadrightarrow}
Q$ 
of type $(r, n, d)$ 
with kernel $S$, we associate the exact sequence, 
\begin{align*}
0\to \wedge^{r} S \to \wedge^{r}\oO_C^{\oplus n}
\stackrel{q'}{\to}Q' \to 0.  
\end{align*}
It is easy to see that $q$ is $\epsilon$-stable 
if and only if $q'$ is $\epsilon$-stable. 
The map $q\mapsto q'$ gives the desired morphism. 
\end{proof}
Next we treat the case of $r=1$. 
\begin{prop}\label{r=1}
For $\epsilon \ge \epsilon'$, there is a 
natural morphism, 
\begin{align}\label{mor:c}
c_{\epsilon, \epsilon'} \colon 
\overline{Q}_{g, m}^{\epsilon}(\mathbb{P}^{n-1}, d) 
\to \overline{Q}_{g, m}^{\epsilon'}(\mathbb{P}^{n-1}, d). 
\end{align}
\end{prop}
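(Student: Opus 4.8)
The plan is to construct the morphism $c_{\epsilon,\epsilon'}$ functorially, by showing that every family of $\epsilon$-stable quotients over a base $B$ canonically produces a family of $\epsilon'$-stable quotients over $B$. Since $\epsilon\ge\epsilon'$, an $\epsilon$-stable quotient automatically satisfies the length inequality $\epsilon'\cdot\length\tau(Q)_p\le \epsilon\cdot\length\tau(Q)_p\le 1$, so the only obstruction to $\epsilon'$-stability is that $\lL(q,\epsilon')=\omega_C(\textstyle\sum p_i)\otimes(\det Q)^{\otimes\epsilon'}$ may fail to be ample: it can be trivial precisely on those components $P\subset C$ which, by Lemma \ref{lem:ob}, have $(s(P),g(P))=(1,0)$ with $1/\epsilon \leq d_P\le 1/\epsilon'$, or $(0,0)$ with $2/\epsilon \leq d_P\le 2/\epsilon'$. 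The idea (following the contraction construction for MOP-stable quotients in \cite[Section~5]{MOP} and the analogous stabilization in Hassett's theory \cite{BH}) is to contract exactly these $\lL(q,\epsilon')$-trivial chains of rational components, push forward the quotient, and check the result is $\epsilon'$-stable. In rank one this is manageable because $Q$ is a quotient of $\oO_C^{\oplus n}$ with $\det Q = Q/\tau(Q)$ a line bundle of controlled degree, so the geometry of the unstable tails is explicit: each such component meets the rest of $C$ in one node (the $s(P)=1$ case) and the rational tails form trees.

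The key steps, in order, are as follows. \textbf{Step 1:} For a single $\epsilon$-stable quotient $(C,p_\bullet,\oO_C^{\oplus n}\twoheadrightarrow Q)$ over $\Spec\mathbb{C}$, identify the maximal connected subcurves $Z\subset C$ of arithmetic genus zero on which $\lL(q,\epsilon')$ has non-positive degree on every component and which attach to $\overline{C\setminus Z}$ at a single point; show the line bundle $\lL(q,\epsilon')^{\otimes N}$ for $N\gg 0$ is globally generated and defines a contraction $b\colon C\to C'$ collapsing each such $Z$ to a point, with $C'$ again nodal of the same arithmetic genus and carrying the images of the markings as a quasi-stable pointed curve. \textbf{Step 2:} Define the new quotient on $C'$ by $q' = b_*(q)$ on the free part together with a torsion correction at the images of the contracted loci: concretely, set $Q' $ so that $0\to S'\to\oO_{C'}^{\oplus n}\to Q'\to 0$ with $S'=(b_*S)^{\vee\vee}$-type normalization, ensuring $S'$ is locally free and $\deg Q' = d$ (the degree lost on contracted components reappears as torsion length at the node-images, which is where the bound $\length\tau(Q')_p\le 1/\epsilon'$ must be verified). \textbf{Step 3:} Check $q'$ is $\epsilon'$-stable: ampleness of $\lL(q',\epsilon')$ on $C'$ via Lemma \ref{lem:ob} component-by-component, using that we contracted exactly the trivial components, and the torsion-length inequality at the new points. \textbf{Step 4:} Upgrade to families: show the construction is flat and compatible with base change — here one uses that over a family the locus where $\lL(q,\epsilon')$ degenerates is cut out by an explicit vanishing condition, and that $R\pi_*\lL(q,\epsilon')^{\otimes N}$ is a vector bundle for $N\gg 0$ whose relative Proj performs the contraction in families (this is the standard argument, cf.\ \cite{BH}, \cite{MOP}). \textbf{Step 5:} Conclude that $q\mapsto q'$ defines a natural transformation of the moduli $2$-functors, hence a morphism of Deligne-Mumford stacks by Theorem \ref{thm:rep}, and note it is proper since both sides are proper over $\mathbb{C}$.

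The main obstacle I expect is \textbf{Step 2}, the precise definition of the pushed-forward quotient and the verification that the degree is preserved with the torsion relocated legally. When a rational tail $P$ with $(s(P),g(P))=(1,0)$ and $\deg(Q|_P)=d_P$ is contracted to a point $p'\in C'$, one must show that $b_*$ of the quotient, suitably modified, has torsion of length exactly $d_P$ at $p'$ — and then one needs $\epsilon' d_P\le 1$, which holds because on $P$ the $\epsilon$-stability forced $d_P \leq 1/\epsilon \leq 1/\epsilon'$ only if we are in the regime $\epsilon'\le\epsilon$; the subtlety is that after contracting a whole chain of such components the lengths could add up, so one must verify the chains being contracted each contribute at a \emph{distinct} image point, or else redistribute. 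Making this bookkeeping rigorous — and checking it is compatible with flat base change, so that $\tau(Q')$ does not jump — is the technical heart of the argument. The genus-zero non-singularity (Lemma \ref{lem:nonsing}) and the explicit wall values (\ref{ei}), (\ref{d:odd}), (\ref{d:even}) can be used to reduce to finitely many combinatorial types of tails, which keeps the verification finite.
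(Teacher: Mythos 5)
Your construction is essentially the one in the paper: identify the rational tails destabilized by lowering $\epsilon$, remove them, and twist the kernel down at the attaching points so that the lost degree reappears as torsion there. The differences are organizational but worth noting. The paper first invokes Proposition~\ref{prop:wall} to reduce to a single wall-crossing $\epsilon_{i+1}\to\epsilon_i$ with $\epsilon_i=1/(d-i+1)$; at that wall the destabilized components are exactly the irreducible components $T_j$ with $(s(T_j),g(T_j))=(1,0)$ and $\deg(Q|_{T_j})=d-i+1=1/\epsilon_i$, and (for $(g,m)\neq(0,0)$) they are pairwise disjoint, each meeting $C'=\overline{C\setminus T}$ in a single node $x_j$. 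This reduction dissolves the chain-bookkeeping problem you single out as the technical heart: there are no genus-zero chains to contract, because an intermediate link would have $(s,g)=(2,0)$ and $\deg(Q|_P)=0$, which already violates $\epsilon$-stability by Lemma~\ref{lem:ob}, and two tails cannot share an attaching point on a nodal curve; so each contracted tail deposits torsion of length exactly $1/\epsilon_i$ at a distinct point, saturating but not violating the inequality. (The same observation shows your worry is vacuous even in your all-at-once formulation, but you would still need to say so.) The second difference is that the paper does not contract via global sections of $\lL(q,\epsilon')^{\otimes N}$ and a relative $\mathrm{Proj}$; it simply sets $S'=S|_{C'}(-\sum_j(d-i+1)x_j)\hookrightarrow\oO_{C'}^{\oplus n}$ and takes $Q'$ to be the cokernel. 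Your line-bundle contraction is a legitimate alternative and is in fact better adapted to the family/base-change statement, which the paper leaves implicit; conversely, your step defining $S'$ as a reflexive-hull normalization of $b_*S$ is vaguer than the paper's explicit twist and should be replaced by it.
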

\begin{proof}
For simplicity we deal with the case of $(g, m)\neq (0, 0)$. 
By Proposition~\ref{prop:wall}, it is 
enough to construct a morphism 
\begin{align}\label{ci}
c_{i+1, i}\colon 
\overline{Q}_{g, m}^{\epsilon_{i+1}}(\mathbb{P}^{n-1}, d) 
\to \overline{Q}_{g, m}^{\epsilon_i}(\mathbb{P}^{n-1}, d),
\end{align}
where $\epsilon_i$ is given by (\ref{ei}). 
Let us take an $\epsilon_{i+1}$-stable 
quotient $\oO_C^{\oplus n}\stackrel{q}{\twoheadrightarrow}Q$, 
and the set of irreducible components $T_1, \cdots, T_k$ 
of $C$ satisfying 
\begin{align}\label{T:sat}
(s(T_j), g(T_j))=(1, 0), \quad \deg(Q|_{T_j})=d-i+1.
\end{align}
Note that $T_j$ and $T_{j'}$ are disjoint 
for $j\neq j'$, by the 
assumption $(g, m)\neq (0, 0)$. 
We set $T$ and $C'$ to be 
\begin{align}\label{TC'}
T=\amalg_{j=1}^{k}T_j, \quad C'=\overline{C\setminus T}. 
\end{align}
The intersection $T_j \cap C'$ consists of 
one point $x_j$, unless $(g, m)=(0, 1)$, $k=1$ and $i=1$. 
In the latter case, the space 
$\overline{Q}_{0, 1}^{\epsilon_1}(\mathbb{P}^{n-1}, d)$
is empty, so there is nothing to prove. 
Let $S$ be the kernel of $q$. We have the sequence of 
inclusions, 
\begin{align*}
S'\cneq S|_{C'}(-\sum_{j=1}^k (d-i+1)x_j) \hookrightarrow 
S|_{C'} \hookrightarrow \oO_{C'}^{\oplus n}, 
\end{align*}
and the exact sequence, 
\begin{align*}
0 \to S' \to \oO_{C'}^{\oplus n} \stackrel{q'}{\to}Q' \to 0.
\end{align*}
It is easy to see that $q'$ is an $\epsilon_i$-stable
quotient. Then the map 
$q\mapsto q'$ gives the desired morphism (\ref{ci}). 
\end{proof}
 \begin{rmk}
 Suppose that $(g, m)\neq (0, 0)$. 
 By Proposition~\ref{r=1}, Proposition~\ref{prop:wall}
 and Theorem~\ref{thm:cross}, 
 we have the sequence of morphisms, 
  \begin{align}\notag
 \overline{M}_{g, m}(\mathbb{P}^{n-1}, d) =
 \overline{Q}_{g, m}^{\epsilon_{d+1}}(\mathbb{P}^{n-1}, d)
 \to \overline{Q}_{g, m}^{\epsilon_{d}}(\mathbb{P}^{n-1}, d)
 \to \cdots \\ \label{mor:seq}
 \cdots \to 
 \overline{Q}_{g, m}^{\epsilon_{2}}(\mathbb{P}^{n-1}, d)
 \to \overline{Q}_{g, m}^{\epsilon_{1}}(\mathbb{P}^{n-1}, d)
 =\overline{Q}_{g, m}(\mathbb{P}^{n-1}, d). 
 \end{align}
 The composition of the above morphism 
 \begin{align}\label{mor:cc}
 c\colon \overline{M}_{g, m}(\mathbb{P}^{n-1}, d) \to 
 \overline{Q}_{g, m}(\mathbb{P}^{n-1}, d)
 \end{align}
 coincides with the morphism 
 constructed in~\cite[Section~5]{MOP}. 
 The morphism $c$ 
 also appears for the Quot scheme of a fixed 
 non-singular curve in~\cite{PR}. 
  \end{rmk}
  Let us investigate the morphism (\ref{ci}) 
  more precisely. 
 For $k\in \mathbb{Z}_{\ge 0}$, 
 we consider a subspace,
 \begin{align}\label{sub:k}
 \overline{Q}_{g, m}^{\epsilon_{i+1}, k+}(\mathbb{P}^{n-1}, d)
 \subset \overline{Q}_{g, m}^{\epsilon_{i+1}}(\mathbb{P}^{n-1}, d),
 \end{align}
 consisting of $\epsilon_{i+1}$-stable 
 quotients with
   exactly $k$-irreducible 
  components $T_1, \cdots, T_k$ 
  satisfying (\ref{T:sat}). 
  Setting $d_i=d-i+1$, the subspace (\ref{sub:k})
fits into the Cartesian diagram, 
\begin{align}\label{Car}
\xymatrix{
  \overline{Q}_{g, m}^{\epsilon_{i+1}, k+}(\mathbb{P}^{n-1}, d)
 \ar[r]\ar[d] &
  \overline{Q}_{0, 1}^{\epsilon_{i+1}}(\mathbb{P}^{n-1}, d_i)^{\times k}
  \ar[d]^{(\ev_1)^{\times k}}, \\
  \overline{Q}_{g, k+m}^{\epsilon_{i}, \epsilon_{i+1}}
(\mathbb{P}^{n-1}, d-kd_i)
 \ar[r] &
(\mathbb{P}^{n-1})^{\times k}. }
\end{align}
Here the bottom arrow is the evaluation map 
with respect to the first $k$-marked points, and  
   the space 
   \begin{align}\label{both}
   \overline{Q}_{g, m}^{\epsilon_{i}, \epsilon_{i+1}}
  (\mathbb{P}^{n-1}, d)
  \end{align}
  is the moduli space of genus $g$, $m$-marked quasi-stable 
  quotients of type $(1, n, d)$, which 
  is both $\epsilon_i$ and $\epsilon_{i+1}$-stable. 
  The space (\ref{both}) is an open Deligne-Mumford
  substack of $\overline{Q}_{g, m}^{\epsilon}
  (\mathbb{P}^{n-1}, d)$
  for both $\epsilon=\epsilon_{i}$ and $\epsilon_{i+1}$. 
Note that the left arrow of (\ref{Car}) is surjective 
since the right arrow is surjective. 
    
 We also consider a subspace 
  \begin{align*}
  \overline{Q}_{g, m}^{\epsilon_{i}, k-}(\mathbb{P}^{n-1}, d)
 \subset \overline{Q}_{g, m}^{\epsilon_{i}}(\mathbb{P}^{n-1}, d),
  \end{align*}
  consisting of $\epsilon_i$-stable quotients 
  $\oO_C^{\oplus n}\stackrel{q}{\twoheadrightarrow} Q$
  with exactly $k$-distinct points $x_1, \cdots, x_k \in C$ 
  satisfying 
  \begin{align*}
   \length \tau(Q)_{x_j}=d_i, \quad 
  1\le j\le k. 
  \end{align*} 
  Obviously we have the isomorphism,
  \begin{align}\label{isom:ob}
  \overline{Q}_{g, m}^{\epsilon_{i}, k-}(\mathbb{P}^{n-1}, d)
  \cong  
  \overline{Q}_{g, k+m}^{\epsilon_{i}, \epsilon_{i+1}}
  (\mathbb{P}^{n-1}, d-kd_i),
  \end{align}
  and the construction of (\ref{ci}) yields the Cartesian 
  diagram,
  \begin{align}\label{dig:Car}
  \xymatrix{
  \overline{Q}_{g, m}^{\epsilon_{i+1}, k+}(\mathbb{P}^{n-1}, d)
 \ar[r]\ar[d] &
  \overline{Q}_{g, m}^{\epsilon_{i+1}}(\mathbb{P}^{n-1}, d) 
  \ar[d]^{c_{i+1, i}}, \\
  \overline{Q}_{g, m}^{\epsilon_{i}, k-}(\mathbb{P}^{n-1}, d)
 \ar[r] &
  \overline{Q}_{g, m}^{\epsilon_{i}}(\mathbb{P}^{n-1}, d). }
  \end{align}
  The left arrow of the diagram (\ref{dig:Car})
 coincides with the left arrow of (\ref{Car})  
  under the isomorphism (\ref{isom:ob}), 
and in particular it is surjective. 
The above argument implies the following. 
\begin{lem}\label{lem:surj}
The morphism $c_{\epsilon, \epsilon'}$ constructed in Proposition~\ref{r=1} 
is surjective. 
\end{lem}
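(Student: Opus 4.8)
The plan is to deduce surjectivity of $c_{\epsilon, \epsilon'}$ from the surjectivity of the intermediate maps $c_{i+1,i}$ together with the factorization already set up in the excerpt. First I would reduce, exactly as in the proof of Proposition~\ref{r=1}, to the case $(g,m)\neq(0,0)$ (the case $(g,m)=(0,0)$ follows from Proposition~\ref{wall2} by the same kind of bookkeeping). Then, by Proposition~\ref{prop:wall} and Theorem~\ref{thm:cross}, the morphism $c_{\epsilon,\epsilon'}$ is the composition of finitely many maps of the form $c_{i+1,i}$ along the chain (\ref{mor:seq}); since a composition of surjective morphisms is surjective, it suffices to show each $c_{i+1,i}$ is surjective.

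Next I would argue surjectivity of $c_{i+1,i}$ using the Cartesian diagram (\ref{dig:Car}). The bottom horizontal arrow of that diagram is the inclusion $\overline{Q}_{g,m}^{\epsilon_i,k-}(\mathbb{P}^{n-1},d)\hookrightarrow \overline{Q}_{g,m}^{\epsilon_i}(\mathbb{P}^{n-1},d)$, and as $k$ ranges over $\mathbb{Z}_{\ge 0}$ these loci cover the target: every $\epsilon_i$-stable quotient has some finite number $k\ge 0$ of points $x_j$ at which the torsion length equals $d_i = d-i+1$, because this is the maximal allowed torsion length at a point for the $\epsilon_i$-stable but not $\epsilon_{i+1}$-stable locus. (Here one uses Lemma~\ref{lem:ob} and the length bound (\ref{ineq}) for $\epsilon_i$ to see $d_i$ is the relevant extremal value, and that $k$ is finite and bounded since $\length\tau(Q)\le d$.) So it is enough that each base-changed left vertical arrow $\overline{Q}_{g,m}^{\epsilon_{i+1},k+}(\mathbb{P}^{n-1},d)\to \overline{Q}_{g,m}^{\epsilon_i,k-}(\mathbb{P}^{n-1},d)$ is surjective onto its target stratum.

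For that, I would invoke the diagram (\ref{Car}) and the isomorphism (\ref{isom:ob}): under (\ref{isom:ob}) the left vertical arrow of (\ref{dig:Car}) is identified with the left vertical arrow of (\ref{Car}), which is the base change of $(\ev_1)^{\times k}\colon \overline{Q}_{0,1}^{\epsilon_{i+1}}(\mathbb{P}^{n-1},d_i)^{\times k}\to (\mathbb{P}^{n-1})^{\times k}$ along the evaluation map of $\overline{Q}_{g,k+m}^{\epsilon_i,\epsilon_{i+1}}(\mathbb{P}^{n-1},d-kd_i)$ at its first $k$ markings. Since $\ev_1$ on $\overline{Q}_{0,1}^{\epsilon_{i+1}}(\mathbb{P}^{n-1},d_i)$ is surjective (a point of $\mathbb{P}^{n-1}$ together with a degree-$d_i$ quotient on a one-pointed genus-zero curve, e.g. the quotient concentrated in torsion of length $d_i$ supported away from the marking, realizes any prescribed value), its $k$-fold product is surjective onto $(\mathbb{P}^{n-1})^{\times k}$, and surjectivity is preserved under base change. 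Hence the left arrow of (\ref{Car}), and therefore of (\ref{dig:Car}), is surjective, giving surjectivity of $c_{i+1,i}$ and finally of $c_{\epsilon,\epsilon'}$.

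The main obstacle I anticipate is the stratification step: one must check carefully that the closed (or locally closed) substacks $\overline{Q}_{g,m}^{\epsilon_i,k-}$ genuinely cover $\overline{Q}_{g,m}^{\epsilon_i}(\mathbb{P}^{n-1},d)$ set-theoretically, i.e. that for every $\epsilon_i$-stable quotient the extremal torsion length $d_i$ and the count $k$ are well-defined, with $k$ finite — and to handle the degenerate subcase flagged in the proof of Proposition~\ref{r=1} (namely $(g,m)=(0,1)$, $k=1$, $i=1$), where the relevant source stratum is empty so there is nothing to check. Everything else is formal: composition and base change preserve surjectivity, and the needed surjectivity of $\ev_1$ on the genus-zero one-pointed moduli space is elementary.
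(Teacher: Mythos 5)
Your argument is correct and is essentially the paper's own proof: stratify the target by the number $k$ of points where the torsion attains the extremal length $d_i$, identify the map over each stratum with a base change of the surjective evaluation $(\ev_1)^{\times k}$ via the Cartesian diagrams (\ref{Car}), (\ref{dig:Car}) and the isomorphism (\ref{isom:ob}), and then compose along the chain (\ref{mor:seq}). One small repair: the point of $\overline{Q}_{0,1}^{\epsilon_{i+1}}(\mathbb{P}^{n-1},d_i)$ you exhibit to show $\ev_1$ is surjective --- all torsion of length $d_i$ concentrated away from the marking --- violates (\ref{ineq}) when placed at a single point, since $\epsilon_{i+1}=1/(d_i-1)$; but surjectivity of $\ev_1$ is immediate from its $\GL_n(\mathbb{C})$-equivariance together with non-emptiness (e.g.\ take the locally free quotient of a degree $d_i$ map through the prescribed point), so nothing is lost.
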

  
  For $r>1$, 
it seems that there is no natural 
morphism between 
$\overline{M}_{g, m}(\mathbb{G}(r, n), d)$
and $\overline{Q}_{g, m}(\mathbb{G}(r, n), d)$,
as pointed out in~\cite{MOP}, \cite{PR}. 
However for $\epsilon=1$, there is a natural 
morphism between moduli spaces of stable maps 
and those of $\epsilon$-stable quotients. 
The following lemma will be used in Lemma~\ref{lem:eqconn} below. 
\begin{lem}\label{lem:worth}
There is a natural surjective morphism, 
\begin{align*}
c'\colon 
\overline{M}_{g, m}(\mathbb{G}(r, n), d)
\to \overline{Q}_{g, m}^{\epsilon=1}(\mathbb{G}(r, n), d).
\end{align*}
\end{lem}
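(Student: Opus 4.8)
The plan is to mimic the construction of the morphism $c_{i+1,i}$ in Proposition~\ref{r=1}, but now starting from a stable map rather than from an $\epsilon_{i+1}$-stable quotient, and to contract only the rational tails on which the map has positive degree but which carry a single special point. First I would recall from Theorem~\ref{thm:cross}(i) that, after choosing $\epsilon>2$, a stable map $f\colon C\to\mathbb{G}(r,n)$ with markings $p_1,\dots,p_m$ is the same datum as an $\epsilon$-stable quotient $\oO_C^{\oplus n}\twoheadrightarrow Q$ obtained by pulling back the universal quotient, where $Q$ is locally free. So the real content is to produce, out of such a locally free quotient, an $\epsilon{=}1$-stable quotient over a possibly modified curve.

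The key steps are as follows. First, identify the locus of instability for $\epsilon=1$: by Lemma~\ref{lem:ob}, ampleness of $\lL(q,1)$ fails exactly on the components $P\subset C$ with $(s(P),g(P))=(1,0)$ and $\deg(Q|_P)=1$ (the ``bad'' rational tails), and one also needs to worry about chains of such components — but, as in Proposition~\ref{r=1}, whenever $(g,m)\neq(0,0)$ distinct bad tails are disjoint, and in each chain only the outermost component can fail the inequality, so after iterating it suffices to treat disjoint bad tails $T_1,\dots,T_k$, each meeting the rest $C'=\overline{C\setminus T}$ in a single point $x_j$. Second, on $C'$ form $S'\cneq S|_{C'}(-\sum_j x_j)\hookrightarrow S|_{C'}\hookrightarrow\oO_{C'}^{\oplus n}$, exactly as in Proposition~\ref{r=1} with $d_i=1$, giving a quotient $\oO_{C'}^{\oplus n}\stackrel{q'}{\twoheadrightarrow}Q'$ whose torsion at $x_j$ has length $1$; one checks $\epsilon{=}1$-stability of $q'$ component by component via Lemma~\ref{lem:ob}, noting $1\cdot\length\tau(Q')_{x_j}=1\le 1$ so the torsion inequality (\ref{ineq}) is saturated but satisfied. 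Third, carry this out in families over $\overline{M}_{g,m}(\mathbb{G}(r,n),d)$ using the universal stable map and its pulled-back universal quotient, twisting down along the universal sections where bad tails attach; flatness of the resulting quotient sheaf and the fact that the modified curve is again quasi-stable give a morphism of the moduli functors, hence the map $c'$. Fourth, for surjectivity: given an $\epsilon{=}1$-stable quotient, run the reverse procedure — at each torsion point of length $1$, or each rational tail of degree $1$ that was contracted, attach a $\mathbb{P}^1$ carrying a degree-$1$ map (equivalently, replace $S'$ at $x_j$ by $S|_{C'}$ and glue a rational bridge) to build a stable map mapping to it. This is precisely the analogue of the surjectivity of the left arrow of the Cartesian square (\ref{Car}), so the same bookkeeping applies.

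The main obstacle I expect is not the set-theoretic construction but making the family version work cleanly when bad tails form chains and when markings or nodes lie on the modified part, i.e. checking that the twisted-down kernel $S'$ stays locally free and that $Q'$ is flat over the base with the right numerical type $(r,n,d)$; one must be careful that, unlike the $r=1$ case of Proposition~\ref{r=1}, here $S$ has rank $r>1$, so ``$\wedge^r$'' is not involved and the twist $S|_{C'}(-\sum x_j)$ means twisting the whole rank-$r$ bundle by $\oO(-\sum x_j)$, which drops the degree by exactly $r$ on each bad tail removed — one checks this is compensated by the degree that sat on the contracted $\mathbb{P}^1$, so the total degree is preserved. The other delicate point is surjectivity onto the whole $\overline{Q}_{g,m}^{\epsilon=1}(\mathbb{G}(r,n),d)$: one must verify that every $\epsilon{=}1$-stable quotient of type $(r,n,d)$ is locally free away from finitely many points each carrying torsion of length exactly $1$ on components with one special point, so that the reverse ``sprouting'' of degree-$1$ rational tails always lands in the stable map space; this follows from Lemma~\ref{lem:ob} applied with $\epsilon=1$ together with the torsion bound (\ref{ineq}), which forces every torsion length $l_p\le 1$.
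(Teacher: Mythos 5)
Your general strategy — identify the bad rational tails $T_j$ with $(s(T_j),g(T_j))=(1,0)$, $\deg(Q|_{T_j})=1$, contract them, and produce a quotient on $C'$ with torsion of length $1$ at the contracted points — is the right one, and it agrees with the paper in outline. But the specific construction you propose in Step 2 is wrong when $r>1$, and this is precisely where the paper's argument has its key technical content.

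You set $S'\cneq S|_{C'}(-\sum_j x_j)\hookrightarrow\oO_{C'}^{\oplus n}$, twisting the full rank-$r$ bundle $S|_{C'}$ down by $\oO(-x_j)$ at each $x_j$. This fails on two counts. First, it does not preserve the degree: twisting the rank-$r$ bundle at a point drops $\deg S$ by $r$, hence raises $\deg Q'$ by $r$, while removing $T_j$ removes only degree $1$; the net change is $r-1\neq 0$ for $r>1$, so $q'$ is not of type $(r,n,d)$. You assert that this is ``compensated by the degree that sat on the contracted $\mathbb{P}^1$,'' but that degree is $1$, not $r$. Second, the torsion is too long: $Q'$ then contains $S|_{C'}/S'\cong S|_{C'}\otimes\oO_{x_j}$, a length-$r$ skyscraper, which violates (\ref{ineq}) at $\epsilon=1$ unless $r=1$. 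Your approach is essentially Proposition~\ref{r=1} applied verbatim, and it only works for $r=1$.

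The paper gets around this by using the actual push-forward $\pi_{\ast}$ along the contraction $\pi\colon C\to C'$ (contracting each $T_j$ to $x_j$), together with the explicit splittings forced by degree reasons and the exact sequence (\ref{asseq}):
\begin{align*}
Q|_{T_j}\cong \oO_{\mathbb{P}^1}(1)\oplus\oO_{\mathbb{P}^1}^{\oplus n-r-1},\qquad
S|_{T_j}\cong \oO_{\mathbb{P}^1}(-1)\oplus\oO_{\mathbb{P}^1}^{\oplus r-1}.
\end{align*}
From these and the restriction sequences for $Q$ and $S$ along $T\cup C'$ one gets $R^1\pi_{\ast}S=0$ and $\tau(\pi_{\ast}Q)_{x_j}$ of length exactly $1$, so $\pi_{\ast}$ of the exact sequence stays exact and has the right type. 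Concretely, $\pi_{\ast}S$ near $x_j$ is the subsheaf of $S|_{C'}$ whose fibre at $x_j$ lies in the $(r-1)$-dimensional subspace coming from the $\oO^{\oplus r-1}$ factor of $S|_{T_j}$ — so the degree drops by exactly $1$, not by $r$. This directional twist (as opposed to your uniform twist of the whole kernel) is the essential point you missed, and it is exactly the reason the author notes that for $r>1$ there is no naive analogue of the $r=1$ contraction $c_{\epsilon,\epsilon'}$, only this special construction at $\epsilon=1$.

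Your surjectivity argument (reversal by sprouting degree-$1$ tails at torsion points) matches the paper, which cites the argument of Lemma~\ref{lem:surj}.
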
 
\begin{proof}
For simplicity, we assume that $(g, m)\neq (0, 0)$. 
For a stable map $f\colon C \to \mathbb{G}(r, n)$
of degree $d$, pulling back the universal quotient yields
the exact sequence, 
\begin{align}\label{asseq}
0 \to S \to \oO_C^{\oplus n} \stackrel{q}{\to}Q \to 0. 
\end{align}
Here $Q$ is a locally free sheaf on $C$ and 
the quotient $q$ is of type $(r, n, d)$. 
Let $T_1, \cdots, T_k$ be the set of irreducible 
components of $C$, satisfying the following, 
\begin{align*}
(s(T_j), g(T_j))=(1, 0), \quad \deg(Q|_{T_j})=1.
\end{align*}
By the exact sequence (\ref{asseq}) 
and the degree reason, the following isomorphisms exist, 
\begin{align}\label{isomS}
Q|_{T_j}\cong \oO_{\mathbb{P}^1}(1)\oplus \oO_{\mathbb{P}^1}^{\oplus 
n-r-1}, \quad 
S|_{T_j}\cong \oO_{\mathbb{P}^1}(-1)\oplus \oO_{\mathbb{P}^1}^{\oplus r-1}.
\end{align}
We set $T$ and $C'$ as in (\ref{TC'}), 
and set $x_j=T_j \cap C'$. 
Let $\pi$ be the morphism 
\begin{align*}
\pi \colon C \to C', 
\end{align*}
which is identity outside $T$ and contracts
$T_j$ to $x_j$. 
The exact sequences 
\begin{align}
0 \to Q|_{T}(-\sum_{j=1}^{k}x_j) \to Q \to Q|_{C'} \to 0, \\
0 \to S|_{C'}(-\sum_{j=1}^{k}x_j) \to S \to S|_{T} \to 0,
\end{align}
and the isomorphisms (\ref{isomS}) show that 
$\pi_{\ast}Q$ has torsion at $x_j$ with length one and
$R^1 \pi_{\ast}S=0$. Therefore
applying $\pi_{\ast}$ to (\ref{asseq})
yields the
exact sequence, 
\begin{align*}
0 \to \pi_{\ast}S \to \oO_{C'}^{\oplus n} \stackrel{q'}{\to}
 \pi_{\ast}Q \to 0. 
\end{align*}
It is easy to see that $q'$ is an $\epsilon$-stable 
quotient with $\epsilon=1$, and the map $f \mapsto q'$ 
gives the desired morphism $c'$. 
An argument similar to Lemma~\ref{lem:surj} shows that the 
morphism $c'$ is surjective. 
\end{proof}
\subsection{Wall-crossing formula 
of virtual fundamental classes}
In~\cite[Theorem~3, Theorem~4]{MOP}, 
the virtual fundamental classes
on moduli spaces of stable maps and 
those of MOP-stable quotients are compared. 
Such a comparison result also holds 
for $\epsilon$-stable quotients. 
Note that the arguments in Subsections~\ref{subsec:Pro}, \ref{subsec:Mor}
yield the following diagram:
\begin{align*}
\xymatrix{
& \ar[dl]_{\ev_i} \overline{Q}_{g, m}^{\epsilon}(\mathbb{G}(r, n), d) 
\ar[r]^{\iota^{\epsilon}} & 
\overline{Q}_{g, m}^{\epsilon}(\mathbb{G}(1,\dbinom{n}{r} ), d)
\ar[dd] _{c_{\epsilon, \epsilon'}}, \\
\mathbb{G}(r, n) & & \\
& \ar[ul]^{\ev_i} \overline{Q}_{g, m}^{\epsilon'}(\mathbb{G}(r, n), d) 
\ar[r]^{\iota^{\epsilon'}} & 
\overline{Q}_{g, m}^{\epsilon'}(\mathbb{G}(1, \dbinom{n}{r}), d).
}
\end{align*}
The following theorem, which 
is a refinement of~\cite[Theorem~4]{MOP}, 
is interpreted as a wall-crossing formula 
of GW type invariants. 
The proof will be given in Section~\ref{sec:cycle}. 
\begin{thm}\label{thm:wcf}
Take $\epsilon \ge \epsilon'>0$ satisfying 
$2g-2+\epsilon' \cdot d>0$. 
We have the formula, 
\begin{align}\label{WCF} 
c_{\epsilon, \epsilon' \ast}\iota^{\epsilon}_{\ast}
 [\overline{Q}_{g, m}^{\epsilon}(\mathbb{G}(r, n), d)]^{\rm vir}
= \iota^{\epsilon'}_{\ast}
 [\overline{Q}_{g, m}^{\epsilon'}(\mathbb{G}(r, n), d)]^{\rm vir}.
\end{align}
In particular for classes 
$\gamma_i 
\in A_{\GL_n(\mathbb{C})}^{\ast}(\mathbb{G}(r, n), \mathbb{Q})$, 
the following holds, 
\begin{align}\notag
& c_{\epsilon, \epsilon' \ast}\iota^{\epsilon}_{\ast}\left( 
\prod_{i=1}^{m}\ev_i^{\ast}(\gamma_i) \cap
 [\overline{Q}_{g, m}^{\epsilon}(\mathbb{G}(r, n), d)]^{\rm vir}
\right) 
= \\ \label{WCF2}
& \qquad \qquad \qquad \qquad \qquad 
\iota^{\epsilon'}_{\ast}\left( 
\prod_{i=1}^{m}\ev_i^{\ast}(\gamma_i) \cap 
 [\overline{Q}_{g, m}^{\epsilon'}(\mathbb{G}(r, n), d)]^{\rm vir}
\right).
\end{align}
\end{thm}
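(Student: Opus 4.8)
The plan is to reduce the general Grassmannian case to the projective-space case $r=1$ via the Plücker-type morphism $\iota^{\epsilon}$, and then to prove the wall-crossing formula $r=1$ by a sequence of elementary wall-crossings $\epsilon_{i+1}\rightsquigarrow\epsilon_i$ using the explicit geometry of the correspondence $c_{i+1,i}$ worked out in Subsection~\ref{subsec:Mor}. For the reduction step, I would first observe that $\iota^{\epsilon}$ identifies $\overline{Q}_{g,m}^{\epsilon}(\mathbb{G}(r,n),d)$ with a closed substack of $\overline{Q}_{g,m}^{\epsilon}(\mathbb{G}(1,\binom{n}{r}),d)$ cut out by the Plücker (quadratic) relations on the universal quotient, compatibly for $\epsilon$ and $\epsilon'$; one then needs that the perfect obstruction theory of Theorem~\ref{thm:vir} on $\overline{Q}_{g,m}^{\epsilon}(\mathbb{G}(r,n),d)$ is induced, in the sense of functoriality of virtual classes (Behrend--Fantechi, as in~\cite{BF},~\cite{GP}), from that on the rank-one space together with the excess data of the Plücker relations. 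This is exactly the analogue of~\cite[Section~5, Theorem~4]{MOP}, and I would cite that argument: it shows $\iota^{\epsilon}_{\ast}[\overline{Q}_{g,m}^{\epsilon}(\mathbb{G}(r,n),d)]^{\mathrm{vir}}$ is computed from the rank-one virtual class by a universal operation that commutes with the proper pushforward $c_{\epsilon,\epsilon'\ast}$. Granting this, the formula~\eqref{WCF} for $\mathbb{G}(r,n)$ follows from the formula for $\mathbb{P}^{n-1}$.

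For the rank-one case, by Proposition~\ref{prop:wall} (and Proposition~\ref{wall2} when $(g,m)=(0,0)$, using $2g-2+\epsilon'd>0$ to avoid the empty range) it suffices to prove $c_{i+1,i\ast}[\overline{Q}_{g,m}^{\epsilon_{i+1}}(\mathbb{P}^{n-1},d)]^{\mathrm{vir}}=[\overline{Q}_{g,m}^{\epsilon_i}(\mathbb{P}^{n-1},d)]^{\mathrm{vir}}$ for each $i$, since $c_{\epsilon,\epsilon'}$ is the composite of such morphisms. The key structural input is the diagram~\eqref{dig:Car}: the locus $\overline{Q}_{g,m}^{\epsilon_i,k-}(\mathbb{P}^{n-1},d)$ of target quotients with $k$ length-$d_i$ torsion points, and its preimage $\overline{Q}_{g,m}^{\epsilon_{i+1},k+}(\mathbb{P}^{n-1},d)$, are related by the Cartesian square~\eqref{Car} exhibiting the latter as a fiber bundle over the former with fiber a product of $k$ copies of the genus-zero space $\overline{Q}_{0,1}^{\epsilon_{i+1}}(\mathbb{P}^{n-1},d_i)$ (which is non-singular by Lemma~\ref{lem:nonsing}). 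I would stratify both sides by $k$, use that the unstable modification of the quotient along the contracted tails $T_j$ does not change the kernel $S$ away from the $x_j$, and compute the virtual class of $\overline{Q}_{g,m}^{\epsilon_{i+1}}(\mathbb{P}^{n-1},d)$ near a $k$-stratum in terms of that of $\overline{Q}_{g,k+m}^{\epsilon_i,\epsilon_{i+1}}(\mathbb{P}^{n-1},d-kd_i)$ twisted by the obstruction bundle coming from $\mathrm{Ext}^1$ of the tails; the exact sequence~\eqref{tanob} together with the genus-zero vanishing~\eqref{genus0} shows these fiberwise obstruction contributions are pure tangent (no excess), so the pushforward under $c_{i+1,i}$ of the top stratum's virtual class collapses to $d_i$ times the virtual class of the genus-zero fiber integrated out, and matches the corresponding piece of $[\overline{Q}_{g,m}^{\epsilon_i}(\mathbb{P}^{n-1},d)]^{\mathrm{vir}}$. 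Summing over $k$ (a finite sum, by Proposition~\ref{prop:wall}) and using that the morphism is an isomorphism off these loci gives~\eqref{WCF} for one wall.

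The main obstacle will be the comparison of obstruction theories across the wall, i.e.\ making precise that $c_{i+1,i}$ (and more globally $\iota^{\epsilon}$) is compatible with the Behrend--Fantechi virtual pullback so that $c_{i+1,i\ast}$ interacts correctly with virtual classes. The target $\overline{Q}_{g,m}^{\epsilon_i}(\mathbb{P}^{n-1},d)$ is typically singular, so one cannot argue naively with fundamental classes; I would instead produce a compatibility datum between the relative obstruction theories of both moduli stacks over $\mathfrak{M}_{g,m}$, exhibiting $\mathbf{R}\pi^{\epsilon_{i+1}}_{\ast}\mathcal{H}om(S_{U^{\epsilon_{i+1}}},Q_{U^{\epsilon_{i+1}}})$ as pulled back from the $\epsilon_i$ side modulo an explicit perfect complex supported on the contracted-tails locus, then invoke the functoriality of virtual classes under such a morphism. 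A secondary technical point is the $(g,m)=(0,0)$ and low-genus boundary ranges, where the wall positions are given by~\eqref{d:odd},~\eqref{d:even} rather than~\eqref{ei}; here the hypothesis $2g-2+\epsilon'd>0$ is exactly what guarantees both moduli spaces are non-empty and the stratification argument goes through verbatim. Finally, the point-insertion statement~\eqref{WCF2} is immediate from~\eqref{WCF} by the projection formula, using that the evaluation maps $\ev_i$ on the first $m$ marked points commute with $c_{\epsilon,\epsilon'}$ and $\iota^{\epsilon}$ (the marked points are never on the contracted tails $T_j$, since $s(T_j)=1$).
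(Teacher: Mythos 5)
Your proposal takes a genuinely different route from the paper, and both of its main steps have gaps. The paper's actual argument is by torus localization: in genus zero the moduli spaces are smooth, proper, connected (Lemma~\ref{lem:eqconn}) and birational, so the pushforward identity holds for fundamental classes; the virtual localization formula (\ref{vloc}) then expresses everything in terms of vertex contributions which are polynomials in the classes $\psi_i,\widehat\psi_j,D_J$ on $\overline{M}^{\epsilon}_{\gamma(v),\valence(v)|\mathbf{s}(v)}$, the genus-zero equality forces these polynomial identities to hold as \emph{abstract polynomials} (the argument of \cite[Lemma~5]{MOP}), and since the genus-dependent Hodge factors satisfy $c^{\ast}\mathbb{E}\cong\mathbb{E}$ (\ref{sat:Hodge}), the same identities yield the formula in all genera.

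The first gap is your reduction to $r=1$. For $r>1$ there is no morphism between $\overline{Q}^{\epsilon}_{g,m}(\mathbb{G}(r,n),d)$ and $\overline{Q}^{\epsilon'}_{g,m}(\mathbb{G}(r,n),d)$ (the paper points this out in Subsection~\ref{subsec:Mor}), and the content of (\ref{WCF}) is an equality of the two images inside the rank-one spaces. Your claim that $\iota^{\epsilon}_{\ast}[\cdot]^{\mathrm{vir}}$ is obtained from the rank-one virtual class by a universal operation commuting with $c_{\epsilon,\epsilon'\ast}$ is not available: the virtual dimensions $nd+r(n-r)(1-g)+3g-3+m$ and $\binom{n}{r}d+(\binom{n}{r}-1)(1-g)+3g-3+m$ differ by an amount that is not the rank of any excess bundle functorially attached to the Plücker relations, and no quantum-Lefschetz statement for the Plücker embedding is at hand. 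The paper instead matches, on each fixed vertex of the rank-one space, the summed contributions of the many splitting types of $S$ on the Grassmannian side that collapse to it. The second gap is the higher-genus comparison across a single wall. You correctly identify it as the main obstacle, but the proposed fix --- a Behrend--Fantechi compatibility triangle between the relative obstruction theories over $\mathfrak{M}_{g,m}$ and virtual pullback along $c_{i+1,i}$ --- does not exist in the required form: $c_{i+1,i}$ is a birational contraction, not a morphism of DM type carrying its own perfect relative obstruction theory fitting into a distinguished triangle with the two absolute ones, and the target is singular in positive genus. The stratum-by-stratum "no excess" claim via (\ref{tanob}) and (\ref{genus0}) is only valid in genus zero, where the paper's argument is anyway immediate from birational invariance. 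Your final observation that (\ref{WCF2}) follows from (\ref{WCF}) by the projection formula and compatibility of the evaluation maps is correct and agrees with the paper.
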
 
\begin{rmk}
The formula (\ref{WCF}) 
in particular implies the formula, 
\begin{align}\label{WCF2}
c_{\ast}
 [\overline{Q}_{g, m}^{\epsilon}(\mathbb{P}^{n-1}, d)]^{\rm vir}
= [\overline{Q}_{g, m}^{\epsilon'}(\mathbb{P}^{n-1}, d)]^{\rm vir}.
\end{align}
Here the morphism $c$ is given by (\ref{mor:cc}). 
Applying the formula (\ref{WCF2})
to the diagram (\ref{mor:seq}) repeatedly, 
we obtain the following formula, 
\begin{align*}
c_{\ast}
 [\overline{M}_{g, m}(\mathbb{P}^{n-1}, d)]^{\rm vir}
= [\overline{Q}_{g, m}(\mathbb{P}^{n-1}, d)]^{\rm vir},
\end{align*}
which reconstructs the result of~\cite[Theorem~3]{MOP}, 
\end{rmk} 
\section{Type $(1, 1, d)$-quotients}\label{sec:ex}
In this section, we 
investigate the moduli spaces of 
 $\epsilon$-stable quotients of type $(1, 1, d)$ 
and relevant wall-crossing phenomena. 
\subsection{Relation to Hassett's weighted pointed 
stable curves}\label{subsec:Hassett}
Here we see that $\epsilon$-stable quotients
of type $(1, 1, d)$ 
are closely related to Hassett's weighted 
pointed stable curves~\cite{BH}, 
which we recall here. 
Let us take a sequence, 
\begin{align*}
a=(a_1, a_2, \cdots, a_m)\in (0, 1]^{m}. 
\end{align*}
\begin{defi}\label{def:weighted}\emph{
A data $(C, p_1, \cdots, p_m)$
of a
nodal curve $C$ and
(possibly not distinct) marked points $p_i \in C^{ns}$ is called
\textit{$a$-stable} if the following conditions 
hold. }
\begin{itemize}
\item \emph{The $\mathbb{R}$-divisor 
$K_C +\sum_{i=1}^{m}a_i p_i$ is ample. }
\item \emph{For any $p\in C$, we have 
$\sum_{p_i=p}a_i \le 1$. } 
\end{itemize}
\end{defi}
Note that setting $a_i=1$ for all $i$
yields the usual $m$-pointed stable curves. 
The moduli space of genus $g$, $m$-pointed 
$a$-stable curves is constructed in~\cite{BH} 
as a proper smooth Deligne-Mumford stack
over $\mathbb{C}$. 
Among weights, we only use the following 
weight for $\epsilon \in (0, 1]$, 
\begin{align}\label{amd}
a(m, d, \epsilon)\cneq (\displaystyle\overbrace{1, \cdots, 1}^{m}, 
\displaystyle\overbrace{\epsilon, \cdots, \epsilon}^{d}). 
\end{align}
The moduli space of genus $g$, 
$m+d$-pointed $a(m, d, \epsilon)$-stable 
curves is denoted by 
\begin{align}\label{denoted}
\overline{M}_{g, m|d}^{\epsilon}. 
\end{align}
If $m=0$, we simply write (\ref{denoted}) as 
$\overline{M}_{g, d}^{\epsilon}$. 
For $\epsilon \ge \epsilon'$, there is a natural 
birational contraction~\cite[Theorem~4.3]{BH}, 
\begin{align}\label{nat:bir}
c_{\epsilon, \epsilon'}\colon 
\overline{M}_{g, m|d}^{\epsilon} \to 
\overline{M}_{g, m|d}^{\epsilon'}. 
\end{align}
Now we describe the moduli spaces of $\epsilon$-stable 
quotients of type $(1, 1, d)$, and relevant 
wall-crossing phenomena.
In what follows, we denote by 
\begin{align*}
\pt \cneq \mathbb{P}^{0}=\mathbb{G}(1, 1) \cong \Spec \mathbb{C}. 
\end{align*}
We have the following proposition. (See~\cite[Proposition~3]{MOP} for the 
corresponding result of MOP-stable quotients.) 
\begin{prop}\label{prop:MOP}
We have the isomorphism, 
\begin{align}\label{isom:phi}
\phi \colon 
\overline{M}_{g, m|d}^{\epsilon}/S_d \stackrel{\sim}{\to}
\overline{Q}_{g, m}^{\epsilon}(\pt, d), 
\end{align}
where the symmetric group $S_d$ acts by permuting the
last $d$-marked points. 
\end{prop}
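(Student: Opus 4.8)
The plan is to construct the isomorphism $\phi$ directly on families, by sending an $a(m,d,\epsilon)$-weighted stable curve to the quotient determined by its last $d$ markings, and then exhibit an explicit inverse. First I would set up the map on objects: given a genus $g$, $(m+d)$-pointed $a(m,d,\epsilon)$-stable curve $(C, p_1, \dots, p_m, x_1, \dots, x_d)$, form the effective divisor $D = x_1 + \cdots + x_d$ and the quotient $\oO_C \twoheadrightarrow \oO_D$. This is a quasi-stable quotient of type $(1,1,d)$ with markings $p_1, \dots, p_m$ (the $x_j$ lie in $C^{ns}$, and $\oO_C$ is locally free everywhere, so $Q = \oO_D$ is locally free near nodes and away from the support of $D$; one must check the markings $p_i$ avoid $\Supp D$, which follows because at a point $p$ with some $p_i = p$ the weighted-stability inequality $\sum_{p_\bullet = p} a_\bullet \le 1$ together with $a_i = 1$ forces no $x_j$ to equal $p$). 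The key compatibility is that the $\mathbb{R}$-divisor $K_C + \sum_i 1\cdot p_i + \sum_j \epsilon \cdot x_j$ appearing in Hassett's ampleness condition is exactly $\omega_C(p_1 + \cdots + p_m) \otimes \det(\oO_D)^{\otimes \epsilon} = \lL(q,\epsilon)$, since $\det \oO_D = \oO_C(D)$ has degree $d$ and restricts correctly on components; and the second Hassett condition $\sum_{x_j = p} \epsilon \le 1$ at a point $p$ is precisely the torsion-length inequality $\epsilon \cdot \length \tau(Q)_p \le 1$ of \eqref{ineq}. So $q$ is $\epsilon$-stable if and only if the weighted curve is $a(m,d,\epsilon)$-stable. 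Since $\oO_D$ determines $D$ as a subscheme but not the ordering of the $x_j$, the fibers of this assignment are exactly $S_d$-orbits, so the map descends to $\overline{M}^{\epsilon}_{g,m|d}/S_d$.

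Next I would build the inverse. Given an $\epsilon$-stable quotient $(\oO_C^{\oplus 1} \stackrel{q}{\twoheadrightarrow} Q)$ of type $(1,1,d)$ with markings $p_i$, the kernel $S \subset \oO_C$ is an ideal sheaf of a $0$-dimensional subscheme supported on $C^{ns} \setminus \{p_i\}$; by quasi-stability $Q$ is locally free near nodes and markings, and since $\rank Q = 0$ the sheaf $Q$ is entirely torsion, $Q = \oO_D$ for an effective Cartier divisor $D$ of degree $d$ supported on the smooth locus away from the markings. Writing $D = \sum_j x_j$ with the $x_j$ counted with multiplicity (an ordering chosen locally, or better: work with the relative effective divisor and pass to the quotient by $S_d$ at the level of stacks), the data $(C, p_1, \dots, p_m, x_1, \dots, x_d)$ is $a(m,d,\epsilon)$-stable by the same dictionary. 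To make this work in families I would use that for a flat family of $\epsilon$-stable quotients $\oO_{\cC}^{\oplus 1} \twoheadrightarrow \qQ$ over $B$, the kernel is an invertible ideal sheaf (flatness of $\qQ$ plus local freeness near nodes), so it cuts out a relative Cartier divisor $\mathcal{D} \subset \cC$ finite of degree $d$ over $B$; this gives a $B$-point of the quotient stack $[\overline{M}^{\epsilon}_{g,m|d}/S_d]$ via the symmetric-product description of unordered divisors. The two constructions are visibly inverse to each other, so $\phi$ is an isomorphism of stacks.

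I would organize the writeup as: (1) the divisor-to-quotient construction on families and verification that $\epsilon$-stability $\Leftrightarrow$ $a(m,d,\epsilon)$-stability via the identification $\lL(q,\epsilon) = \oO_C(K_C + \sum a_i p_i)$ and the torsion/collision dictionary; (2) the quotient-to-divisor construction, using that rank-zero quasi-stable quotients of $\oO_C$ are structure sheaves of relative Cartier divisors; (3) the $S_d$-quotient bookkeeping and the check that the two functors are mutually inverse. The main obstacle is step (2) in families: one must argue carefully that the kernel $\mathcal{S}$ of a flat family of such quotients is a line bundle (not just fiberwise so) and that $\cC \supset \mathcal{D}$ is a relative Cartier divisor, so that $\qQ \cong \oO_{\mathcal{D}}$ globally and the association is functorial; this is where the local freeness of $Q$ near nodes from quasi-stability, combined with the hypothesis $2g - 2 + \epsilon d > 0$ (ensuring ampleness is not vacuous and the stack is Deligne--Mumford), does the real work. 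Everything else is the straightforward translation already foreshadowed by Example~(ii) in the excerpt and by \cite[Proposition~3]{MOP}.
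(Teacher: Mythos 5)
Your proposal is correct and follows essentially the same route as the paper's proof: associate to an $a(m,d,\epsilon)$-stable weighted pointed curve the quotient $\oO_C \twoheadrightarrow \oO_D$ with $D=\sum_j \widehat{p}_j$ via the exact sequence $0\to\oO_C(-\sum_j\widehat{p}_j)\to\oO_C\to Q\to 0$, observe that Hassett's two stability conditions translate exactly into the ampleness of $\lL(q,\epsilon)$ and the torsion-length bound, and descend through the $S_d$-action. The paper dismisses the remaining verification with ``it is straightforward to check that $\phi$ is an isomorphism,'' and your explicit inverse (recovering the relative Cartier divisor from the kernel of a flat family of rank-zero quotients) is precisely the content of that omitted check.
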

\begin{proof}
Take a genus $g$, $m+d$-pointed 
$a(m, d, \epsilon)$-stable curve, 
\begin{align*}
(C, p_1, \cdots, p_m, \widehat{p}_1, \cdots, \widehat{p}_d). 
\end{align*}
We associate the genus $g$, $m$-pointed quasi-stable 
quotient of type $(1, 1, d)$ by the 
exact sequence, 
\begin{align*}
0 \to \oO_C(-\sum_{j=1}^{d}\widehat{p}_j) \to \oO_C 
\stackrel{q}{\to} Q \to 0, 
\end{align*}
with $m$-marked points $p_1, \cdots, p_m$. 
The $a(m, d, \epsilon)$-stability 
immediately implies the $\epsilon$-stability 
for the quotient $q$. 
The map $(C, p_{\bullet}, \widehat{p}_{\bullet}) \mapsto q$
is $S_d$-equivariant, hence we obtain 
the map $\phi$. It is straightforward to check 
that $\phi$ is an isomorphism. 
\end{proof}
\begin{rmk}\label{rmk:coin}
The morphism (\ref{nat:bir})
is $S_d$-equivariant, hence it determines a morphism,
\begin{align*}
c_{\epsilon, \epsilon'} \colon \overline{M}^{\epsilon}_{g, m|d}/S_d
\to \overline{M}^{\epsilon'}_{g, m|d}/S_d. 
\end{align*}
It is easy to see that the above morphism coincides with (\ref{mor:c})
under the isomorphism (\ref{isom:phi}). 
\end{rmk}
\subsection{The case of $(g, m)=(0, 0)$}
Here we investigate $\epsilon$-stable quotients of 
type $(1, 1, d)$ with $(g, m)=(0, 0)$. 
First we take $d$ to be an odd integer with $d=2d'+1$,
$d'\ge 1$. We take $\epsilon_{\bullet}$ as in (\ref{d:odd}). 
Applying the morphism (\ref{nat:bir}) repeatedly, we obtain 
the sequence of birational morphisms, 
\begin{align}\label{seq:bir}
\overline{M}_{0, d}=\overline{M}_{0, d}^{\epsilon_{d'+1}=1}
\to \overline{M}_{0, d}^{\epsilon_{d'}} \to \cdots  
\to \overline{M}_{0, d}^{\epsilon_3} \to \overline{M}_{0, d}^{\epsilon_2 =1/d'}.\end{align}
It is easy to see that $\overline{M}_{0, d}^{1/d'}$ is 
the moduli space of configurations of $d$-points 
in $\mathbb{P}^1$ in which at most $d'$-points 
coincide. This space is 
well-known to be isomorphic to 
the GIT quotient~\cite{Mum}, 
\begin{align}\label{GIT}
\overline{M}_{0, d}^{1/d'} \cong (\mathbb{P}^1)^d 
/\hspace{-.3em}/ \SL_2(\mathbb{C}). 
\end{align}
Here $\SL_{2}(\mathbb{C})$ acts on $(\mathbb{P}^1)^d$ diagonally, 
and we take the linearization on $\oO(\overbrace{1, \cdots, 1}^{d})$
induced by the standard linearization on $\oO_{\mathbb{P}^1}(1)$. 
Since the sequence (\ref{seq:bir}) is $S_d$-equivariant, 
taking the quotients of (\ref{seq:bir}) and combining
the isomorphism (\ref{isom:phi}) yield the sequence of 
birational morphisms, 
\begin{align}\notag
&\overline{Q}_{0, 0}^{\epsilon_{d'+1}=1}
(\pt, d) \to 
\overline{Q}_{0, 0}^{\epsilon_{d'}}
(\pt, d) \to 
\cdots \\
\label{seq:Q}
 & \qquad \qquad \cdots \to 
\overline{Q}_{0, 0}^{\epsilon_{3}}
(\pt, d) \to 
\overline{Q}_{0, 0}^{\epsilon_{2}=1/d'}
(\pt, d) \cong \mathbb{P}^d
/\hspace{-.3em}/ \SL_2(\mathbb{C}).
\end{align}
Here the last isomorphism
is obtained by taking the quotient of (\ref{GIT})
by the $S_d$-action. 
By Remark~\ref{rmk:coin}, 
 each 
morphism in (\ref{seq:Q}) coincides with 
the morphism (\ref{mor:c}). 
Recently 
Kiem-Moon~\cite{KIMO} show that 
 each birational morphism in the
sequence (\ref{seq:bir})
is a blow-up at a union of transversal smooth subvarieties of 
same dimension. As pointed out in~\cite[Remark~4.5]{KM}, 
the sequence (\ref{seq:Q})
is a sequence of weighted blow-ups 
from $\mathbb{P}^d /\hspace{-.3em}/ \SL_2(\mathbb{C})$. 

When $d$ is even
with $d=2d'$, 
let us take $\epsilon_{\bullet}$ as in (\ref{d:even}). 
We also have a similar sequence to (\ref{seq:bir}), 
\begin{align}\notag
\overline{M}_{0, d}=\overline{M}_{0, d}^{\epsilon_{d'}=1}
\to \overline{M}_{0, d}^{\epsilon_{d'-1}} \to \cdots  
\to \overline{M}_{0, d}^{\epsilon_3} \to 
\overline{M}_{0, d}^{\epsilon_2 =1/(d'-1)}.
\end{align}
which is a sequence of blow-ups~\cite{KIMO}.
In this case, instead of the isomorphism (\ref{GIT}), 
there is a birational morphism, (cf.~\cite[Theorem~1.1]{KIMO},)
\begin{align*}
\overline{M}_{0, d}^{1/(d'-1)}
\to (\mathbb{P}^1)^{d}/\hspace{-.3em}/ \SL_2(\mathbb{C}), 
\end{align*} 
obtained by the blow-up along the singular locus which 
consists of $\frac{1}{2}{d \atopwithdelims() d'}$ points in 
the RHS. 
As mentioned in~\cite{KIMO}, 
$\overline{M}_{0, d}^{1/(d'-1)}$ is Kirwan's partial 
desingularization~\cite{FCK}
of the GIT quotient $(\mathbb{P}^1)^{d}/\hspace{-.3em}/ \SL_2(\mathbb{C})$. 
By taking the quotients with respect to the $S_d$-actions, 
we obtain a sequence similar to (\ref{seq:Q}),
\begin{align}\notag
&\overline{Q}_{0, 0}^{\epsilon_{d'}=1}
(\pt, d) \to 
\overline{Q}_{0, 0}^{\epsilon_{d'-1}}
(\pt, d) \to 
\cdots \\
\label{seq:Q2}
 & \qquad \qquad \qquad  \cdots \to 
\overline{Q}_{0, 0}^{\epsilon_{2}=1/(d'-1)}(\pt, d)
\to (\mathbb{P}^d)/\hspace{-.3em}/ \SL_2(\mathbb{C}) ,
\end{align}
a sequence of weighted blow-ups.
Finally Theorem~\ref{thm:cross} yields that 
\begin{align*}
\overline{Q}_{0, 0}^{\epsilon}(\pt, d) =\emptyset,
\quad \epsilon>1 \mbox{ or }d=1. 
\end{align*}
As a summary, we obtain the following. 
\begin{thm}
The moduli space $\overline{Q}_{0, 0}^{\epsilon}(\pt, d)$
is either empty or obtained by a 
sequence of weighted blow-ups starting 
from the GIT quotient 
$\mathbb{P}^{d}/\hspace{-.3em}/ \SL_2(\mathbb{C})$. 
\end{thm}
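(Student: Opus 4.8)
The plan is to assemble the statement from the facts collected in this subsection, treating the two parities of $d$ separately and isolating the one point that is not a direct citation. First I would dispose of the empty cases: by Lemma~\ref{lem:empty} we have $\overline{Q}_{0,0}^{\epsilon}(\pt,d)=\emptyset$ for $0<\epsilon\le 2/d$, and by Theorem~\ref{thm:cross} the space is empty for $\epsilon>1$ and for $d=1$. Hence we may assume $\overline{Q}_{0,0}^{\epsilon}(\pt,d)\neq\emptyset$, which forces $d\ge 3$ and $\epsilon\in(2/d,1]$; by Proposition~\ref{wall2} it then suffices to identify the finitely many spaces $\overline{Q}_{0,0}^{\epsilon_i}(\pt,d)$ attached to the walls $\epsilon_i$ of (\ref{d:odd}) or (\ref{d:even}) lying in this range.

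Next I would pass to the Hassett moduli spaces. The isomorphism $\phi$ of Proposition~\ref{prop:MOP} identifies $\overline{Q}_{0,0}^{\epsilon_i}(\pt,d)$ with $\overline{M}_{0,d}^{\epsilon_i}/S_d$, and by Remark~\ref{rmk:coin} the wall-crossing morphisms $c_{\epsilon_{i+1},\epsilon_i}$ become the $S_d$-quotients of Hassett's contractions (\ref{nat:bir}). By Kiem--Moon~\cite{KIMO} each contraction $\overline{M}_{0,d}^{\epsilon_{i+1}}\to\overline{M}_{0,d}^{\epsilon_i}$ appearing in (\ref{seq:bir}) is the blow-up along a disjoint union of transversal smooth subvarieties of equal dimension; I would check that this center is $S_d$-invariant, record the isotropy groups of its generic point and of the exceptional divisor, and conclude that the induced map $\overline{M}_{0,d}^{\epsilon_{i+1}}/S_d\to\overline{M}_{0,d}^{\epsilon_i}/S_d$ is a weighted blow-up (in the stacky sense) along a smooth center. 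This is exactly the content of the sequences (\ref{seq:Q}) and (\ref{seq:Q2}).

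Finally I would identify the initial term. For $d$ odd, (\ref{GIT}) gives $\overline{M}_{0,d}^{1/d'}\cong(\mathbb{P}^1)^d/\hspace{-.3em}/\SL_2(\mathbb{C})$, whose $S_d$-quotient is $\overline{Q}_{0,0}^{\epsilon_2}(\pt,d)\cong\mathbb{P}^d/\hspace{-.3em}/\SL_2(\mathbb{C})$; composing with (\ref{seq:Q}) exhibits every $\overline{Q}_{0,0}^{\epsilon}(\pt,d)$ in the nonempty range as a sequence of weighted blow-ups of $\mathbb{P}^d/\hspace{-.3em}/\SL_2(\mathbb{C})$. For $d$ even the minimal Hassett space $\overline{M}_{0,d}^{1/(d'-1)}$ is instead Kirwan's partial desingularization~\cite{FCK} of $(\mathbb{P}^1)^d/\hspace{-.3em}/\SL_2(\mathbb{C})$, which maps to it by the blow-up at the finitely many singular points; taking the $S_d$-quotient of this last map contributes one further weighted blow-up, so again $\overline{Q}_{0,0}^{\epsilon}(\pt,d)$ arises from $\mathbb{P}^d/\hspace{-.3em}/\SL_2(\mathbb{C})$ by a finite sequence of weighted blow-ups, namely (\ref{seq:Q2}). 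Combined with the emptiness established in the first step, this gives the theorem.

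The hard part will be the middle step: establishing that the $S_d$-quotient of Kiem--Moon's blow-up is again a weighted blow-up of the quotient, and in particular that the quotient introduces nothing worse than the expected orbifold singularities along the new exceptional locus. Concretely this means analyzing the $S_d$-action on the centers and the exceptional divisors appearing in (\ref{seq:bir}), showing each center descends, and computing the stabilizers that produce the weights --- the point alluded to in~\cite[Remark~4.5]{KM}. Once that is in place, everything else is bookkeeping with the wall structure of Proposition~\ref{wall2} and the identifications of Theorem~\ref{thm:cross} and Proposition~\ref{prop:MOP}.
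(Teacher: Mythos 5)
Your proposal matches the paper's argument: both reduce via Proposition~\ref{prop:MOP} to Hassett's $\overline{M}_{0,d}^{\epsilon}/S_d$, invoke Kiem--Moon's blow-up description of the contractions (\ref{seq:bir}), take $S_d$-quotients to obtain the weighted blow-up sequences (\ref{seq:Q}), (\ref{seq:Q2}), and identify the terminal space with $\mathbb{P}^d/\hspace{-.3em}/\SL_2(\mathbb{C})$ via (\ref{GIT}) for $d$ odd or Kirwan's partial desingularization for $d$ even, with the empty cases handled by Lemma~\ref{lem:empty} and Theorem~\ref{thm:cross} exactly as you do. The one step you flag as ``the hard part''---that the $S_d$-quotient of a Kiem--Moon blow-up is a weighted blow-up of the quotient---is precisely what the paper attributes without further argument to the cited remark, so your plan to verify it by analyzing the $S_d$-action on the centers just fills in a step the paper takes as given rather than departing from its route.
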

\subsection{The case of $(g, m)=(0, 1), (0, 2)$}
In this subsection, we study 
moduli spaces of genus zero, $1$ or $2$-pointed 
$\epsilon$-stable quotients 
of type $(1, 1, d)$.
Note that for small $\epsilon$, we have 
\begin{align*}
\overline{Q}_{0, 1}^{\epsilon}(\pt, d)=\emptyset, \quad 
0<\epsilon \le 1/d. 
\end{align*}
The first interesting situation 
happens at $\epsilon=1/(d-1)$
and $d\ge 2$. 
For an object
\begin{align*}
(C, p, \widehat{p}_1, \cdots, \widehat{p}_d) \in 
\overline{M}^{1/(d-1)}_{0, 1|d}, 
\end{align*}
applying 
Lemma~\ref{lem:ob} 
immediately implies that $C\cong \mathbb{P}^1$. 
We may assume that $p=\infty \in \mathbb{P}^1$, 
hence $\widehat{p}_i \in \mathbb{A}^1$. 
The stability condition is equivalent to 
that at least two points 
among $\widehat{p}_1, \cdots, \widehat{p}_d$ 
are distinct.
Let $\Delta$ be the small diagonal, 
\begin{align*}
\Delta=\{(x_1, \cdots, x_d)\in \mathbb{A}^d :
x_1=x_2=\cdots =x_d\}.
\end{align*}
 Noting that the subgroup of 
automorphisms of $\mathbb{P}^1$ preserving 
$p\in \mathbb{P}^1$ is $\mathbb{A}^1 \rtimes \mathbb{G}_m$, 
we have 
\begin{align*}
\overline{M}^{1/(d-1)}_{0, 1|d}
&\cong (\mathbb{A}^{d}\setminus \Delta)/\mathbb{A}\rtimes 
\mathbb{G}_m \\
&\cong \mathbb{P}^{d-2}. 
\end{align*}
By Proposition~\ref{prop:MOP}, we obtain 
\begin{align}\label{gm01}
\overline{Q}_{0, 1}^{1/(d-1)}(\pt, d)
\cong \mathbb{P}^{d-2}/S_d. 
\end{align}
In particular 
for each $\epsilon \in \mathbb{R}_{>0}$, the 
moduli space 
 $\overline{Q}_{0, 1}^{\epsilon}(\pt, d)$
is either empty or admits a birational 
morphism to $\mathbb{P}^{d-2}/S_d$. 

Next we look at the case of $(g, m)=(0, 2)$. 
An $\epsilon$-stable quotient is a MOP-stable 
quotient for $0<\epsilon \le 1/d$, and 
in this case the 
moduli space is 
described in~\cite[Section~4]{MOP}. 
In fact for any MOP-stable quotient 
$\oO_C^{\oplus n} \stackrel{q}{\twoheadrightarrow}Q$, 
the curve $C$ is a chain of rational curves and 
two marked points lie at distinct rational tails
if $C$ is not irreducible. 
If $k$ is the number of irreducible components of 
$C$, then giving a MOP stable quotient is equivalent 
to giving a partition $d_1+\cdots +d_k=d$ and 
length $d_i$-divisors on each 
irreducible component up to rotations. Therefore we have 
(set theoretically)
\begin{align}\label{set}
\overline{Q}_{0, 2}(\pt, d)
=\coprod_{\begin{subarray}{c}k\ge 1 \\
d_1+\cdots +d_k=d \end{subarray}}
\prod_{j=1}^{k}\Sym ^{d_i}(\mathbb{C}^{\ast})/\mathbb{C}^{\ast}.
\end{align}
For $1/d<\epsilon \le 1/(d-1)$, 
a MOP stable quotient $\oO_C^{\oplus n} \stackrel{q}{\twoheadrightarrow}Q$
is not $\epsilon$-stable if and only if 
$C\cong \mathbb{P}^1$ and 
the support of $\tau(Q)$ consists of one point. 
Such stable quotients consist of one point in 
the RHS of (\ref{set}). 
Noting the isomorphism (\ref{gm01}), 
the Cartesian diagram (\ref{dig:Car}) is described 
as follows, 
\begin{align*}
  \xymatrix{
  \mathbb{P}^{d-2}/S_d
 \ar[r]\ar[d] &
  \overline{Q}_{0, 2}^{\epsilon}(\pt, d) 
  \ar[d], \\
 \Spec \mathbb{C}
 \ar[r] &
  \overline{Q}_{0, 2}(\pt, d). }
  \end{align*}

\section{Proof of Theorem~\ref{thm:rep}}\label{sec:proof}
In this section, we give a proof of Theorem~\ref{thm:rep}. 
We first show that $\overline{Q}_{g, m}^{\epsilon}(\mathbb{G}(r, n), d)$
is a Deligne-Mumford stack of finite type over 
$\mathbb{C}$, following the 
argument of~\cite{MOP}, \cite{BH}. 
Next we show the properness of 
$\overline{Q}_{g, m}^{\epsilon}(\mathbb{G}(r, n), d)$
using the valuative criterion. The argument
to show the properness of MOP-stable quotients~\cite[Section~6]{MOP}
is not 
 applied for $\epsilon$-stable quotients. 
Instead we give an alternative argument, 
which also gives another proof of~\cite[Theorem~1]{MOP}. 
\subsection{Construction of the moduli space}
The same arguments of Proposition~\ref{prop:wall} and Proposition~\ref{wall2}
show the similar result 
for the 2-functors (\ref{2func}). 
For $\epsilon >1$, the moduli space 
of $\epsilon$-stable quotients is 
either empty or isomorphic to the moduli space 
of stable maps to the Grassmannian. 
Therefore we assume that 
\begin{align*}
\epsilon=\frac{1}{l}, \quad l=1, 2, \cdots, d,
\end{align*}
and construct the moduli space $\overline{Q}_{g, m}^{1/l}(\mathbb{G}(r, n), d)$
as a global quotient stack. If $\epsilon=1/d$, then 
the moduli space coincides with that of MOP-stable 
quotients,
(cf.~Theorem~\ref{thm:cross},)
 and the construction is given in~\cite[Section~6]{MOP}.
We need to slightly modify the argument 
to construct the moduli spaces for a general $\epsilon$, 
but the essential idea is the same. 
First we show the following lemma. 
\begin{lem}\label{veryample}
Take an $\epsilon=1/l$-stable quotient 
$\oO_C^{\oplus n}\stackrel{q}{\twoheadrightarrow}Q$
and an integer $k\ge 5$. Then the line bundle 
$\lL(q, 1/l)^{\otimes lk}$ 
is very ample. Here $\lL(q, 1/l)$
is defined in (\ref{def:L}).
\end{lem}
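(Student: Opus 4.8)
The plan is to reduce the very ampleness statement to a numerical positivity check on each irreducible component, since very ampleness of a line bundle on a nodal curve can be verified componentwise together with a separation-of-points argument at the nodes. First I would recall that for a line bundle $L$ on a connected nodal curve $C$, if $\deg(L|_P) \ge 2g(P)+1$ for every irreducible component $P \subset C$ (and in fact a slightly stronger bound accounting for the nodes lying on $P$), then $L$ is very ample; this is a standard fact, and the threshold $k \ge 5$ in the statement is engineered precisely so that the relevant estimates hold uniformly across the four types of components $(s(P), g(P)) \in \{(2,0), (0,1), (1,0), (0,0)\}$ allowed by quasi-stability together with $\epsilon$-stability.

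The key computation is the degree estimate. Writing $L = \lL(q, 1/l)^{\otimes lk}$, on a component $P$ we have
\begin{align*}
\deg(L|_P) = lk\bigl(2g(P) - 2 + s(P) + \tfrac{1}{l}\deg(Q|_P)\bigr)
= lk\bigl(2g(P) - 2 + s(P)\bigr) + k\deg(Q|_P).
\end{align*}
Then I would run through the cases using Lemma~\ref{lem:ob}: when $(s(P), g(P)) = (2,0)$ or $(0,1)$ the first term is $0$ and $\deg(Q|_P) \ge 1$, so $\deg(L|_P) \ge k \ge 5$; when $(s(P), g(P)) = (1,0)$ the first term is $-lk$ and $\deg(Q|_P) > l$, i.e. $\deg(Q|_P) \ge l+1$, giving $\deg(L|_P) \ge k$; when $(s(P), g(P)) = (0,0)$ the first term is $-2lk$ and $\deg(Q|_P) > 2l$, i.e. $\deg(Q|_P) \ge 2l+1$, giving $\deg(L|_P) \ge k \ge 5$. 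In every case $\deg(L|_P) \ge 5 \ge 2g(P) + 3$ since $g(P) \le 1$, so on each component $L|_P$ is very ample and in fact $3$-very ample, which is the surplus positivity one needs to handle the nodes.

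Then I would assemble the global statement: a line bundle on a nodal curve that restricts to a very ample (indeed sufficiently positive) line bundle on each component, with enough extra positivity to separate the two branches at each node, is very ample on the whole curve. The standard way to see this is to check that $H^0(C, L) \to H^0(C, L|_Z)$ is surjective for every length-two subscheme $Z$ (including those supported at a node, or at a node plus one branch direction), which follows from the vanishing $H^1(C, L \otimes I_Z) = 0$; this vanishing is deduced by devissage over the components using the componentwise degree bounds above together with the fact that $g(C)$ can be arbitrary but each $g(P) \le 1$. The main obstacle I expect is not the componentwise estimate, which is routine, but the careful bookkeeping at the nodes — one must verify separation of two branches meeting at a node and tangent vectors along a branch passing through a node — and getting a clean uniform argument that works simultaneously for all four component types without splitting into too many subcases. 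Since this is essentially the argument of~\cite[Section~6]{MOP} adapted by replacing $d$ with $l$ throughout (the quantity $1/\epsilon = l$ playing the role of the old threshold), I would if possible simply cite that construction and indicate the one modification, namely that the bound $\epsilon \cdot \length \tau(Q)_p \le 1$ forces $\length \tau(Q)_p \le l$ which is what keeps the degree bounds on components with torsion from degenerating.
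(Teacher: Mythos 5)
Your strategy is essentially the paper's: very ampleness of $L=\lL(q,1/l)^{\otimes lk}$ is reduced to the vanishing $H^1(C, L\otimes I_{x_1}I_{x_2})=0$ for all pairs of points, which the paper verifies by Serre duality --- turning it into exactly your componentwise degree estimate --- and handles at the nodes by passing to the normalization. Your computations in the four cases $(s(P),g(P))=(2,0),(0,1),(1,0),(0,0)$, using Lemma~\ref{lem:ob} to bound $\deg(Q|_P)$ from below, agree with the paper's.

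There is, however, one incorrect premise in your write-up. It is not true that quasi-stability and $\epsilon$-stability restrict the components to those four types, nor that $g(P)\le 1$: an irreducible component of a nodal curve of genus $g$ may have arbitrary geometric genus and arbitrarily many special points, and Lemma~\ref{lem:ob} simply imposes no condition on $\deg(Q|_P)$ (beyond $\ge 0$) when $2g(P)-2+s(P)>0$. Consequently your inequality ``$\deg(L|_P)\ge 5\ge 2g(P)+3$ since $g(P)\le 1$'' is not justified as stated. The gap is easy to fill: for such a component one has $\deg(L|_P)\ge lk\,(2g(P)-2+s(P))$, and with $k\ge 5$ one checks $5(2g(P)-2+s(P))\ge 2g(P)+3$ whenever $2g(P)-2+s(P)\ge 1$ (if $s(P)=0$ then $g(P)\ge 2$, so the left side is at least $10$). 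This is precisely the case the paper dismisses as ``obviously negative'' in its Serre-dual formulation. With that case restored, and with the nodal bookkeeping you defer to~\cite{MOP} actually carried out (the paper normalizes at the node and repeats the same degree estimate for $\omega_{\widetilde{C}}(x_1'+x_1''+x_2)\otimes \lL(q,1/l)^{\otimes(-lk)}$), your argument matches the one in the text.
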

\begin{proof}
It is enough to show that for $x_1, x_2 \in C$, we have 
\begin{align}\label{x1x2}
H^1(C, \lL(q, 1/l)^{\otimes lk}\otimes I_{x_1}I_{x_2})=0. 
\end{align}
Here $I_{x_i}$ is the ideal sheaf of $x_i$. 
By the Serre duality, (\ref{x1x2}) is equivalent to 
\begin{align}\label{xx12}
\Hom(I_{x_1}I_{x_2}, \omega_{C}\otimes \lL(q, 1/l)^{\otimes (-lk)})=0. 
\end{align}
Suppose that $x_1, x_2 \in C^{ns}$. 
For an irreducible component $P\subset C$, 
we set $d_P=\deg(Q|_{P})$. 
In the notation of Lemma~\ref{lem:ob}, 
we have 
\begin{align}\notag
&\deg(\omega_C(x_1+x_2)\otimes \lL(q, 1/l)^{\otimes (-lk)}|_{P}) \\
\notag
&\le 2g(P)-2+s(P)+2-lk(2g(P)-2+s(P)+d_{P}/l) \\
\label{deg:ineq}
&=(2g(P)-2+s(P))(1-lk)+2-d_{P}k. 
\end{align}
In the case of 
\begin{align*}
2g(P)-2+s(P)>0, 
\end{align*}
then (\ref{deg:ineq}) is 
obviously negative. Otherwise 
$(g(P), s(P))$ is either one of the following, 
\begin{align*}
(g(P), s(P))=(1, 0), (0, 2), (0, 1), (0, 0).
\end{align*} 
In these cases, (\ref{deg:ineq}) is negative 
by Lemma~\ref{lem:ob}.
Therefore (\ref{xx12}) holds.  

When $x_1$ or $x_2$ or both of them are node, 
for instance $x_1$ is node and $x_2 \in C^{ns}$, 
then we take the normalization at $x_1$, 
\begin{align*}
\pi \colon \widetilde{C}\to C, 
\end{align*}
with $\pi^{-1}(x_1)=\{x_1', x_1''\}$. 
Then (\ref{xx12}) is equivalent to 
\begin{align}\label{xxx12}
H^0(\widetilde{C}, \omega_{\widetilde{C}}(x_1'+x_1''+x_2)\otimes 
\lL(q, 1/l)^{\otimes (-lk)})=0,
\end{align}
and the same calculation as above shows (\ref{xxx12}). 
The other cases are also similarly discussed. 
\end{proof}
By Lemma~\ref{veryample}, 
we have
\begin{align}\label{notdepend}
h^{0}(C, \lL(q, 1/l)^{\otimes kl})
=1-g+kl(2g-2)+kd+m,
\end{align}
which does not depend on a choice of $1/l$-stable 
quotient of type $(r, n, d)$. 
Let $V$ be an $\mathbb{C}$-vector space of 
dimension (\ref{notdepend}). 
The very ample line bundle $\lL(q, 1/l)^{\otimes kl}$
on $C$
determines an embedding, 
\begin{align*}
C\hookrightarrow \mathbb{P}(V),
\end{align*}
and marked points determine points in $\mathbb{P}(V)$. 
Therefore $1/l$-stable quotient associates
a point, 
\begin{align}
(C, p_1, \cdots, p_m) \in \Hilb(\mathbb{P}(V)) \times 
\mathbb{P}(V)^{\times m}. 
\end{align}
Let 
\begin{align*}
\hH \subset \Hilb(\mathbb{P}(V))\times \mathbb{P}(V)^{\times m}
\end{align*}
be the locally closed subscheme which parameterizes 
$(C, p_1, \cdots, p_m)$ satisfying the following. 
\begin{itemize}
\item The subscheme $C\subset \mathbb{P}(V)$ is a connected nodal 
curve of genus $g$. 
\item We have $p_i \in C^{ns}$ and $p_i \neq p_j$
for $i\neq j$. 
\end{itemize}
Let $\pi \colon \cC \to \hH$ be the universal curve and 
\begin{align*}
\Quot(n-r, d) \to \hH 
\end{align*}
the relative Quot scheme which parameterizes 
rank $n-r$, degree $d$ quotients 
$\oO_C^{\oplus n} \twoheadrightarrow Q$
on the fibers of $\pi$. 
We define 
\begin{align*}
\qQ \subset \Quot(n-r, d),
\end{align*}
to be the locally closed subscheme
corresponding to quotients 
$\oO_C^{\oplus n} \stackrel{q}{\twoheadrightarrow} Q$
satisfying the following. 
\begin{itemize}
\item The coherent sheaf $Q$ is locally free 
near nodes and $p_i$. 
\item For any $p\in C$, we have $\length \tau(Q)_{p} \le l$. 
\item The line bundle $\lL(q, 1/l)^{\otimes lk}$ 
coincides with $\oO_{\mathbb{P}(V)}(1)|_{C}$. 
\end{itemize}
The natural $\PGL_{2}(\mathbb{C})$-action on 
$\hH$ lifts to the action on $\qQ$, and 
the desired moduli space is 
the following
quotient stack, 
\begin{align*}
\overline{Q}_{g, m}^{1/l}(\mathbb{G}(r, n), d)
=[\qQ/\PGL_{2}(\mathbb{C})].
\end{align*}
By Lemma~\ref{lem:aut}, the stabilizer groups of 
closed points in 
$\overline{Q}_{g, m}^{1/l}(\mathbb{G}(r, n), d)$
are finite. Hence this is a Deligne-Mumford stack
of finite type over $\mathbb{C}$. 
\subsection{Valuative criterion}
In this subsection, we prove the 
properness of the moduli stack 
$\overline{Q}_{g, m}^{\epsilon}(\mathbb{G}(r, n), d)$. 
Before this, we introduce some notation. 
Let $X$ be a variety and $F$ a locally 
sheaf of rank $r$ on $X$. 
For $n\ge r$ and a morphism,
\begin{align*}
s\colon \oO_X^{\oplus n} \to F, 
\end{align*}
we associate the degenerate locus,
\begin{align*}
Z(s)\subset X.
\end{align*}
Namely $Z(s)$ is defined by
the ideal, locally generated by $r\times r$-minors of 
the matrix given by $s$. 
For a point $g\in \mathbb{G}(r, n)$, 
let us choose a lift of $g$ to an embedding 
\begin{align}\label{lift}
g\colon \mathbb{C}^{r} \hookrightarrow \mathbb{C}^n.
\end{align}
Here by abuse of notation, we have also denoted 
the above embedding by $g$. 
We have the
sequence, 
\begin{align*}s_g \colon 
\oO_{X}^{\oplus r}\stackrel{g}{\hookrightarrow}
\oO_X^{\oplus n} \stackrel{s}{\to} F.
\end{align*}
The morphism $s_g$ is determined 
by $g\in \mathbb{G}(r, n)$
up to the $\GL_{r}(\mathbb{C})$-action on 
$\oO_X^{\oplus r}$. 
Note that if $s_g$ is injective, then 
$Z(s_g)$ is a 
divisor on $X$ 
which does not depend on a choice of 
a lift (\ref{lift}).  
The divisor $Z(s_g)$ fits into 
the exact sequence, 
\begin{align*}
0 \to \bigwedge^{r}F^{\vee} \to \oO_X 
\to \oO_{Z(s_g)} \to 0. 
\end{align*}
When $X=\mathbb{G}(n-r, n)$ and 
$s$ is a universal rank $r$ quotient, 
then $s_g$ is injective and $H_g \cneq Z(s_g)$ 
is a divisor in $\mathbb{G}(n-r, n)$. 
\begin{lem}\label{lem:div}
Let $\oO_C^{\oplus n}\stackrel{q}{\twoheadrightarrow} Q$
be an $\epsilon$-stable quotient with kernel $S$ and 
marked points $p_1, \cdots, p_m$. 
Let $s\colon \oO_{C}^{\oplus n}\to S^{\vee}$ be the dual 
of the inclusion $S\hookrightarrow \oO_C^{\oplus n}$. 
Then for a general choice of $g\in \mathbb{G}(r, n)$, 
the degenerate locus
$Z(s_g)\subset C$ is a divisor written as
\begin{align*}
Z(s_g)=Z(s)+D_g. 
\end{align*}
Here $D_g$ is a reduced divisor on $C$ satisfying 
\begin{align*}
D_g \cap \{Z(s)\cup \{p_1, \cdots, p_m\} \}=\emptyset. 
\end{align*}
\end{lem}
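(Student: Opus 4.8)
The plan is to read off the divisor $Z(s)$ from the dual of the defining sequence, dispose of the open part $C\setminus\Supp\tau(Q)$ by a Bertini‑type argument for the Schubert divisors, and handle the finitely many torsion points by a local computation with Smith normal form; at each step the set of admissible $g$ will be a nonempty Zariski open subset of $\mathbb{G}(r,n)$ (the conditions being invariant under the choice of a lift $g\colon\mathbb{C}^r\hookrightarrow\mathbb{C}^n$), so their intersection is again nonempty open. First I would record the structure of $s$. By quasi‑stability $S$ is locally free of rank $r$, so applying $\Hom(-,\oO_C)$ to $0\to S\to\oO_C^{\oplus n}\to Q\to 0$ yields an exact sequence $0\to Q^{\vee}\to\oO_C^{\oplus n}\stackrel{s}{\to}S^{\vee}\to T\to 0$ with $T$ a torsion sheaf; since $\Supp\tau(Q)\subset C^{ns}$ and $\mathcal{E}xt^1_{\oO_C}(\tau(Q),\oO_C)$ is isomorphic to $\tau(Q)$ on the smooth locus, $T$ is supported on $\Supp\tau(Q)$ with $\length T_p=\length\tau(Q)_p=:\ell_p$. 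Hence $s$ is a surjection of vector bundles over $U\cneq C\setminus\Supp\tau(Q)$, and since $Z(s)$ is defined locally by the $r\times r$ minors of $s$, the Smith normal form of $s$ over the discrete valuation ring $\oO_{C,p}$ shows that the multiplicity of $Z(s)$ at $p$ equals $\length T_p$. Therefore $Z(s)=\sum_p\ell_p\,[p]$, an effective divisor supported on $C^{ns}\setminus\{p_1,\dots,p_m\}$, in particular disjoint from the nodes and the markings.

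Over $U$ the bundle surjection $s$ defines a morphism $\phi\colon U\to\mathbb{G}(n-r,n)$ to the Grassmannian of rank $r$ quotients of $\mathbb{C}^n$, with $Z(s_g)|_U=\phi^{\ast}H_g$, where $H_g$ is the Schubert divisor of those quotients whose kernel meets $g$ nontrivially. The family $\{H_g\}_{g\in\mathbb{G}(r,n)}$ is base point free and its members span the complete very ample system $|\oO(1)|$ (Pl\"ucker), so a Bertini‑type argument shows that for $g$ in a nonempty open $\Omega_1$ the pullback $\phi^{\ast}H_g$ is a reduced divisor on $U$ disjoint from the fixed finite set of markings and nodes. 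I also need $Z(s_g)$ to be well defined: at a general point $x$ of each irreducible component $P\subset C$, $s$ is surjective and $\ker(s\otimes k(x))$ is an $(n-r)$‑plane, so a general $g$ is complementary to it and $s_g\otimes k(x)$ is then an isomorphism; hence $s_g$ is generically injective on each component, therefore injective since $\oO_C^{\oplus r}$ is torsion free, and $Z(s_g)=V(\det s_g)$ is a well‑defined effective Cartier divisor for $g$ in a nonempty open $\Omega_2$.

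It remains to control $Z(s_g)$ near a torsion point $p\in\Supp\tau(Q)$. Writing $A$ for the $r\times n$ matrix of $s$ over $\oO_{C,p}$ and $M_g$ for the $n\times r$ constant matrix of the lift $g$, and putting $A$ in Smith normal form $RAP=[\,\mathrm{diag}(t^{a_1},\dots,t^{a_r})\mid 0\,]$ with $R\in\GL_r(\oO_{C,p})$, $P\in\GL_n(\oO_{C,p})$, $\sum a_i=\ell_p$, one finds $\det(AM_g)=\det(R^{-1})\,t^{\ell_p}\,\det N_g'$, where $N_g'$ is the top $r\times r$ block of $P^{-1}M_g$. Reducing modulo $\mathfrak{m}_p$ replaces $P^{-1}$ by an element of $\GL_n(\mathbb{C})$, so $\det N_g'$ is a unit in $\oO_{C,p}$ for $g$ outside a proper closed subset; for such $g$, $Z(s_g)$ coincides with $\ell_p\,[p]=Z(s)$ in a neighbourhood of $p$. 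Intersecting over the finitely many points of $\Supp\tau(Q)$ gives a nonempty open $\Omega_3$, and for $g\in\Omega_1\cap\Omega_2\cap\Omega_3$ the divisor $D_g\cneq Z(s_g)-Z(s)$ is effective, supported on $U$, reduced, and disjoint from $Z(s)\cup\{p_1,\dots,p_m\}$, which is the assertion.

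The step I expect to require the most care is the Bertini argument for the family $\{H_g\}$: since the $H_g$ form only a subvariety, not a linear subspace, of $|\oO_{\mathbb{G}(n-r,n)}(1)|$, one must check that they separate tangent directions. This does hold, because the $H_g$ already span the complete very ample system (decomposable tensors span $\bigwedge^r\mathbb{C}^n$), so the decomposable sections vanishing at a point span all sections vanishing there and hence their differentials span the cotangent space; alternatively the point can be sidestepped by a dimension count showing that the incidence locus $\{(x,g)\in U\times\mathbb{G}(r,n): \det s_g \ \mbox{vanishes at}\ x\ \mbox{to order}\ \ge 2\}$ has dimension $\dim\mathbb{G}(r,n)-1$ and therefore does not dominate $\mathbb{G}(r,n)$. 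The remaining verifications—the dualization, the minor/length identity, and the elementary genericity statements—are routine.
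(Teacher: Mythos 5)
Your argument is correct and follows essentially the same route as the paper: dualize the defining sequence, recognize the rank-$r$ surjection onto a Grassmannian of quotients, and pair a Bertini-type genericity statement for the Schubert divisors $H_g$ with avoidance of the finite set $\Supp\tau(Q)\cup\{p_1,\dots,p_m\}$. The one organizational difference is that the paper works globally: it observes that the image $F=\mathrm{im}(s)\subset S^{\vee}$ is locally free of rank $r$ on \emph{all} of $C$ (the cokernel $T$ is concentrated on $C^{ns}$, so $F=S^{\vee}$ near the nodes), hence defines $\pi_F\colon C\to\mathbb{G}(n-r,n)$ on the whole curve, after which the identity $Z(s_g)=Z(s)+\pi_F^{\ast}H_g$ is immediate from multiplicativity of determinants along the factorization $\oO_C^{\oplus r}\to F\hookrightarrow S^{\vee}$. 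You instead restrict the quotient map to $U=C\setminus\Supp\tau(Q)$, run Bertini there, and handle each torsion point by a separate Smith-normal-form computation over $\oO_{C,p}$; this is more hands-on and makes explicit the identity $Z(s)=\sum_p\ell_p[p]$ (which the paper leaves implicit), at the cost of a finite case-by-case patching step. Both buy the same conclusion; the paper's version is a shade slicker, yours is more self-contained and spells out the genericity conditions as explicit Zariski-open subsets of $\mathbb{G}(r,n)$.
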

\begin{proof}
Let $F\subset S^{\vee}$ be the image of $s$. 
Note that $F$ is a locally free sheaf of rank $r$, hence 
it determines a map, 
\begin{align*}
\pi_{F}\colon C \to \mathbb{G}(n-r, n). 
\end{align*}
It is easy to see that a general $g\in \mathbb{G}(r, n)$
satisfies the following. 
\begin{itemize}
\item The divisor $H_g \subset \mathbb{G}(n-r, n)$
intersects the image of $\pi_{F}$ transversally. 
(Or the intersection is empty if $\pi_{F}(C)$ is a point.)
\item For $p\in \Supp \tau(Q) \cup \{p_1, \cdots, p_m\}$, 
we have $\pi_{F}(p)\notin H_g$. 
\end{itemize}
Then we have 
\begin{align*}
Z(s_g)=Z(s)+\pi_{F}^{\ast}H_g,
\end{align*}
and $D_g \cneq \pi_{F}^{\ast}H_g$ satisfies 
the desired property. 
\end{proof}
In the next proposition, we show that 
the moduli space of $\epsilon$-stable quotients
is separated. 
Let $\Delta$ be a non-singular
curve with a closed point $0\in \Delta$. We set 
\begin{align*}
\Delta^{\ast}=\Delta \setminus\{0\}.
\end{align*}
\begin{prop}\label{qstable}
For $i=1, 2$, let $\pi_i \colon \xX_i \to \Delta$ be 
flat families of quasi-stable curves with 
disjoint sections 
$p_1^{(i)}, \cdots, p_m^{(i)} \colon \Delta \to \xX_i$. 
Let $q_i \colon \oO_{\xX_i}^{\oplus n} \twoheadrightarrow \qQ_i$
be flat families of $\epsilon$-stable quotients
of type $(r, n, d)$ 
which are isomorphic over $\Delta^{\ast}$. 
Then possibly after base change ramified over $0$, 
there is an isomorphism 
$\phi \colon \xX_1 \stackrel{\sim}{\to}\xX_2$
over $\Delta$ and an isomorphism 
$\psi \colon \phi^{\ast}\qQ_2 \stackrel{\sim}{\to}
\qQ_1$
such that the following diagram commutes, 
\begin{align*}
\xymatrix{
\oO_{\xX_1}^{\oplus n} \ar[r]^{\phi^{\ast}q_2} \ar[d]_{\id}
& \phi^{\ast}\qQ_2 \ar[d]^{\psi} \\
\oO_{\xX_1}^{\oplus n} \ar[r]^{q_1} & \qQ_1.}
\end{align*}
\end{prop}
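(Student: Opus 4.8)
The plan is to verify the valuative criterion by pushing the problem onto Hassett's moduli of weighted pointed stable curves, using the degeneracy divisor of Lemma~\ref{lem:div} as the bridge, and then pulling the resulting isomorphism of curves back to an isomorphism of quotients via separatedness of the Quot scheme. First a reduction: by Proposition~\ref{prop:wall}, Proposition~\ref{wall2} and Theorem~\ref{thm:cross} we may fix $\epsilon=1/l$ with $1\le l\le d$, so in particular $\epsilon\le 1$; the case $\epsilon>1$ only involves locally free quotients, i.e.\ honest maps to $\mathbb{G}(r,n)$ with an ample weighted--canonical twist, and is handled by the same (in fact simpler) argument or by the classical separatedness of $\overline{M}_{g,m}(\mathbb{G}(r,n),d)$. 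We are free to replace $\Delta$ by ramified covers throughout, in order to extend isomorphisms given over $\Delta^{\ast}$ and to split finite flat covers of $\Delta$ into sections; the case $l=d$ will in particular reprove the separatedness part of~\cite{MOP}.

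Step~1: attach a family of $a(m,d,\epsilon)$-stable curves. Let $s_i\colon \oO_{\xX_i}^{\oplus n}\to \sS_i^{\vee}$ be the dual of the inclusion of the kernel $\sS_i$ of $q_i$. Choose $g\in\mathbb{G}(r,n)$ general enough that the conclusion of Lemma~\ref{lem:div} holds on the generic fibre of $\pi_i$ and on the closed fibres $\xX_{i,0}$, for $i=1,2$ simultaneously; this is possible, being the intersection of finitely many dense open conditions. Then $Z((s_i)_g)=V(\det (s_i)_g)\subset\xX_i$ is a relative Cartier divisor of relative degree $d$, finite flat over $\Delta$, disjoint from the sections $p_j^{(i)}$ and from the nodes, and by Lemma~\ref{lem:div} applied fibrewise $\epsilon\cdot\mathrm{mult}_p Z((s_i)_g)|_b=\epsilon\cdot\length\tau(\qQ_{i,b})_p\le 1$ at every torsion point and equals $\epsilon\le1$ (multiplicity one) along the reduced part. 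Since $\det\qQ_i\cong\oO_{\xX_i}(Z((s_i)_g))$, fibrewise the weighted log--canonical class $\omega_{\xX_{i,b}}(\sum_j p_j^{(i)})\otimes\oO(\epsilon\, Z((s_i)_g)|_b)$ equals $\lL(q_{i,b},\epsilon)$, hence is ample exactly by $\epsilon$-stability; in particular the added divisor stabilizes precisely the components with $(s(P),g(P))=(1,0)$, using (\ref{ob2}). Thus
\[
\bigl(\pi_i\colon\xX_i\to\Delta,\ p_1^{(i)},\dots,p_m^{(i)}\ [\text{weight }1],\ Z((s_i)_g)\ [\text{weight }\epsilon\text{ per point}]\bigr)
\]
is a family of $a(m,d,\epsilon)$-stable curves in the sense of Definition~\ref{def:weighted} and (\ref{amd}). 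After a further ramified base change we order the $d$ points of $Z((s_i)_g)$ consistently, obtaining two objects of $\overline{M}_{g,m|d}^{\epsilon}$ over $\Delta$.

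Step~2: match the curves. Over $\Delta^{\ast}$ the given isomorphism of $\epsilon$-stable quotients identifies the data $(\sS_i^{\vee},s_i)$, hence the divisors $Z((s_i)_g)$, hence the two families of $a(m,d,\epsilon)$-stable curves. As $\overline{M}_{g,m|d}^{\epsilon}$ is a proper, in particular separated, Deligne--Mumford stack~\cite{BH} (see also Subsection~\ref{subsec:Hassett}), this isomorphism extends, after one more ramified base change, to an isomorphism $\phi\colon\xX_1\xrightarrow{\sim}\xX_2$ over $\Delta$ with $\phi(p_j^{(1)})=p_j^{(2)}$. Step~3: match the quotients. Now $\qQ_1$ and $\phi^{\ast}\qQ_2$ are two $\Delta$-flat quotients of $\oO_{\xX_1}^{\oplus n}$ agreeing over the schematically dense open $\Delta^{\ast}$. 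Since $\pi_1\colon\xX_1\to\Delta$ is projective and flat, the relative Quot scheme of $\oO_{\xX_1}^{\oplus n}$ is a disjoint union of projective $\Delta$-schemes, in particular separated, so the two quotients coincide; equivalently $\Ker(q_1)=\Ker(\phi^{\ast}q_2)$, which gives the desired isomorphism $\psi\colon\phi^{\ast}\qQ_2\xrightarrow{\sim}\qQ_1$ with $\psi\circ\phi^{\ast}q_2=q_1$, completing the verification.

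The main obstacle is Step~1: one must select the auxiliary point $g$ uniformly over \emph{all} fibres, not merely the generic one, and then check that the associated weighted divisor genuinely satisfies Hassett's stability \emph{fibrewise} — in particular that the per--point weight on the closed fibre is controlled, which is exactly the place where the torsion--length hypothesis (\ref{ineq}) is used and where the original MOP argument (ampleness of $\lL(q,\epsilon)$ for all $\epsilon$) does not apply. Once the question has been transported to $\overline{M}_{g,m|d}^{\epsilon}$, separatedness is available off the shelf, and the return trip to quotients is only the separatedness of the Quot scheme.
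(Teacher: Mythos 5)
Your proof is correct, and it takes a genuinely different route at the decisive step. Both arguments use the same bridge — the degeneracy divisor $Z(s_{i,g})$ of Lemma~\ref{lem:div} for a uniformly general $g\in\mathbb{G}(r,n)$, together with the identity $\oO_{\xX_i}(Z(s_{i,g}))\cong\det\qQ_i$ and the fact that the multiplicity of $Z(s_{i,t})$ at $p$ equals $\length\tau(\qQ_{i,t})_p$, so that $\epsilon$-stability translates exactly into ampleness of the weighted log--canonical class plus the per--point weight bound $\le 1$. Where you diverge is in how the isomorphism of curves over $\Delta^{\ast}$ is extended across $0$: the paper observes that the weight bound forces the pairs $(\xX_i,\sum_j p_j^{(i)}+\epsilon\cdot D_i)$ to be log canonical, that $\phi_{\ast}$ matches the boundary divisors, and then invokes uniqueness of birational log canonical models over $\Delta$ (an MMP argument, citing \cite{KM}, \cite{KMM}); you instead split $D_i$ into (multi-)sections after a ramified base change, package everything as two objects of $\overline{M}_{g,m|d}^{\epsilon}$ over $\Delta$, and quote separatedness of Hassett's stack. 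The two are essentially equivalent in content (Hassett's separatedness is itself proved by a log--canonical--model argument), but your version has the advantage of being exactly parallel to the completeness half of the paper's properness proof, which reduces to the \emph{properness} of $\overline{M}_{g,m|d}^{\epsilon}$ in the same way; the paper's version avoids the bookkeeping of ordering the $d$ points of $Z(s_{i,g})$ (which you correctly handle by a further base change, noting that $D_i=\sum_j m_j\overline{D_j}$ as relative divisors since $D_i$ is finite over $\Delta$ and so contains no component of the central fibre). Note also that the paper first normalizes so that the generic fibre is smooth and irreducible, a reduction your argument does not need since Hassett's stack already accommodates nodal generic fibres. Your Step~3 (separatedness of the relative Quot scheme) is identical to the paper's opening reduction.
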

\begin{proof}
Since the relative Quot scheme is separated, it 
is enough show that the isomorphism over $\Delta^{\ast}$ extends to the 
families of marked curves $\pi_i \colon \xX_i \to \Delta$.
By taking the base change and the normalization, 
we may assume that the general fibers of $\pi_i$ are 
non-singular irreducible curves, by adding the preimage
of the nodes to the marking points. 
Let us take exact sequences, 
\begin{align*}
0 \to \sS_i \to \oO_{\xX_i}^{\oplus n} \to \qQ_i \to 0. 
\end{align*}
Since $\sS_i|_{\xX_{i, t}}$ is locally free 
for any $t\in \Delta$, 
where $\xX_{i, t}\cneq \pi_{i}^{-1}(t)$, 
the sheaf $\sS_i$ is
a locally free sheaf on $\xX_i$. 
Taking the dual, we obtain the morphism, 
\begin{align*}
s_i \colon \oO_{\xX_i}^{\oplus n}\to 
\sS_i^{\vee}.
\end{align*}
Let us take a general point $g\in \mathbb{G}(r, n)$
and the degenerate locus,
\begin{align*}
D_i \cneq Z(s_{i, g})\subset
\xX_i.
\end{align*}
By Lemma~\ref{lem:div}, the divisor 
$D_{i, t}\cneq D_i|_{\xX_{i, t}}$ is written as 
\begin{align*}
D_{i, t}=Z(s_{i, t})+D_{i, t}^{\circ},
\end{align*}
where $D_{i, t}^{\circ}$ is a reduced 
divisor on $\xX_{i, t}$, satisfying 
\begin{align*}
D_{i, t}^{\circ} \cap \{ Z(s_{i, t}) \cup \{p_1(t), \cdots, p_m(t)\} \}
=\emptyset.
\end{align*}
Then the $\epsilon$-stability 
of $\oO_{\xX_{i, t}}^{\oplus n} \stackrel{q_{i, t}}{\twoheadrightarrow}
\qQ_{i}|_{\xX_{i, t}}$
implies the following. 
\begin{itemize}
\item The coefficients of the
$\mathbb{R}$-divisor $\sum_{j=1}^{m}p_j^{(i)}(t) +\epsilon \cdot D_{i, t}$
have less than or equal to $1$. 
\item The $\mathbb{R}$-divisor 
$K_{\xX_{i, t}}+\sum_{j=1}^{m}
p_j^{(i)}(t) +\epsilon \cdot D_{i, t}$ is ample 
on $\xX_{i, t}$. 
\end{itemize}
The first condition implies that the pairs
\begin{align}\label{quot:pair}
(\xX_i, \sum_{j=1}^{m}p_j^{(i)}+\epsilon \cdot D_i), \quad i=1, 2,
\end{align}
have only log canonical singularities.
(cf.~\cite{KM}, \cite{KMM}.)
Also since the divisors 
$\sum_{j=1}^{m}p_j^{(i)}+\epsilon \cdot D_i$
do not contain curves supported on the 
central fibers, 
we have 
\begin{align*}
\phi_{\ast}\left(\sum_{j=1}^{m}p_j^{(1)}+\epsilon \cdot D_1\right)
=\sum_{j=1}^{m}p_j^{(2)}+\epsilon \cdot D_2,
\end{align*}
where $\phi$ is the birational map 
$\phi \colon \xX_1 \dashrightarrow \xX_2$. 
Therefore the pairs (\ref{quot:pair}) are birational 
log canonical models over $\Delta$. 
Since two birational log canonical models
are isomorphic,
the birational map $\phi$
extends to an isomorphism $\phi \colon \xX_1 \stackrel{\cong}{\to} \xX_2$.  
\end{proof}
Finally we show that the moduli space 
$\overline{Q}_{g, m}^{\epsilon}(\mathbb{G}(r, n), d)$
is complete. 
\begin{prop}
Suppose that the following data is a flat family of $m$-pointed 
$\epsilon$-stable quotients
of type $(r, n, d)$ over $\Delta^{\ast}$, 
\begin{align}\label{given}
\pi^{\ast} \colon \xX^{\ast} \to \Delta^{\ast}, \quad p_1^{\ast},
 \cdots, p_m^{\ast} \colon 
\Delta^{\ast} \to \xX^{\ast}, \quad q^{\ast}\colon \oO_{\xX^{\ast}}^{\oplus n}
\twoheadrightarrow \qQ^{\ast}.
\end{align}
Then possibly after base change
ramified over $0\in \Delta$, there is 
a flat family of $m$-pointed $\epsilon$-stable quotients over 
$\Delta$, 
\begin{align}\label{ext-fami}
\pi \colon \xX \to \Delta, \quad p_1, \cdots, p_m \colon 
\Delta \to \xX, \quad q\colon \oO_{\xX}^{\oplus n} \twoheadrightarrow \qQ,
\end{align}
which is isomorphic to (\ref{given})
over $\Delta^{\ast}$. 
\end{prop}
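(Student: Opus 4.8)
The plan is to extend the datum (\ref{given}) in two stages. First I would extend the underlying pointed curve together with an auxiliary effective divisor manufactured from the quotient, invoking the properness of Hassett's moduli spaces of weighted pointed stable curves (equivalently, passing to a relative log canonical model exactly as in Proposition~\ref{qstable}); then I would extend the quotient itself by the properness of the relative Quot scheme over the extended family, and finally verify that the limiting quotient is $\epsilon$-stable. All finite base changes ramified over $0\in\Delta$ are allowed, as is permitted in the valuative criterion. To set things up, write $0\to\sS^{\ast}\to\oO_{\xX^{\ast}}^{\oplus n}\to\qQ^{\ast}\to 0$; quasi-stability makes $\sS^{\ast}$ a locally free sheaf of rank $r$ on $\xX^{\ast}$, and I let $s^{\ast}\colon\oO_{\xX^{\ast}}^{\oplus n}\to(\sS^{\ast})^{\vee}$ be its dual. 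Fixing a general $g\in\mathbb{G}(r,n)$, I set $D^{\ast}\cneq Z(s^{\ast}_g)\subset\xX^{\ast}$. By Lemma~\ref{lem:div} this is, fibrewise, an effective divisor $D^{\ast}_t=Z(s^{\ast}_t)+(D^{\ast}_t)^{\circ}$ with $(D^{\ast}_t)^{\circ}$ reduced and disjoint from $Z(s^{\ast}_t)\cup\{p^{\ast}_1(t),\dots,p^{\ast}_m(t)\}$; since $\det\qQ^{\ast}_t\cong\oO_{\xX^{\ast}_t}(D^{\ast}_t)$, the morphism $D^{\ast}\to\Delta^{\ast}$ is finite flat of degree $d$, and $\lL(q^{\ast}_t,\epsilon)\cong K_{\xX^{\ast}_t}+p^{\ast}_1(t)+\cdots+p^{\ast}_m(t)+\epsilon\, D^{\ast}_t$.

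After labelling the $d$ points of $D^{\ast}$ \'etale-locally over $\Delta^{\ast}$, the datum $(\xX^{\ast}\to\Delta^{\ast};p^{\ast}_{\bullet};D^{\ast})$ is then a family of genus $g$, $(m+d)$-pointed curves that are $a(m,d,\epsilon)$-stable in the sense of Definition~\ref{def:weighted}: the ampleness of $\lL(q^{\ast}_t,\epsilon)$ is exactly the ampleness of $K+\sum p^{\ast}_j+\epsilon D^{\ast}_t$, and the inequality (\ref{ineq}) is precisely the weight bound $\sum_{p_i=p}a_i\le 1$ at each point (markings, of weight $1$, never collide with points of $D^{\ast}$). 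This gives a morphism $\Delta^{\ast}\to\overline{M}^{\epsilon}_{g,m|d}/S_d$. Since $\overline{M}^{\epsilon}_{g,m|d}$ is a proper Deligne--Mumford stack over $\mathbb{C}$ by~\cite{BH}, after a ramified base change this extends to $\Delta\to\overline{M}^{\epsilon}_{g,m|d}/S_d$, i.e.\ to a family $(\pi\colon\xX\to\Delta;p_1,\dots,p_m;D)$ with $\xX\to\Delta$ flat projective with nodal fibres, $D\to\Delta$ finite flat of degree $d$, all of $p_j(0)$ and $\Supp D_0$ contained in $\xX_0^{ns}$, and $(\xX_0;p_{\bullet}(0);D_0)$ being $a(m,d,\epsilon)$-stable. (Equivalently, after a semistable reduction one runs the relative log minimal model program for the pair $(\xX^{\ast},\sum p^{\ast}_j+\epsilon D^{\ast})$ over $\Delta$ and takes its relative log canonical model, exactly as in the proof of Proposition~\ref{qstable}; for $\epsilon=1/d$ this reproves~\cite[Theorem~1]{MOP}.)

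Next I would extend the quotient. The relative Quot scheme $\Quot(\oO_{\xX/\Delta}^{\oplus n})$ of rank $n-r$, degree $d$ quotients is proper over $\Delta$, and the family $q^{\ast}$ defines a morphism $\Delta^{\ast}\to\Quot(\oO_{\xX/\Delta}^{\oplus n})$ which extends uniquely over $\Delta$, giving a flat family $q\colon\oO_{\xX}^{\oplus n}\twoheadrightarrow\qQ$ restricting to $q^{\ast}$. Flatness of $\qQ$ and of $\sS\cneq\Ker q$ over $\Delta$ then yields the exact sequence $0\to\sS_0\to\oO_{\xX_0}^{\oplus n}\to\qQ_0\to 0$ on the central fibre, and it remains to check that $q_0$ is an $\epsilon$-stable quotient of type $(r,n,d)$. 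The point is to identify the torsion of $\qQ_0$ (equivalently, the degeneracy locus of $\sS_0$) with the relevant part of $D_0$: this shows $\qQ_0$ is locally free near the nodes and markings, hence $q_0$ is quasi-stable and perfect of type $(r,n,d)$; then $\det\qQ_0\cong\oO_{\xX_0}(D_0)$ together with the ampleness of $K_{\xX_0}+\sum p_j(0)+\epsilon D_0$ gives, via Lemma~\ref{lem:ob}, the ampleness of $\lL(q_0,\epsilon)$; and the weight bound on $D_0$ gives $\epsilon\cdot\length\tau(\qQ_0)_p\le 1$. Thus (\ref{ext-fami}) is the required extension, and combined with Proposition~\ref{qstable} this establishes properness of $\overline{Q}_{g,m}^{\epsilon}(\mathbb{G}(r,n),d)$.

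The main obstacle will be the verification in the last step that the Quot-theoretic limit $\qQ_0$ is again \emph{quasi-stable}, and that its torsion locus is governed by $D_0$. The difficulty is that forming kernels and dualising need not commute with restriction to the central fibre, so a priori $\qQ_0$ could acquire torsion (indeed fail to be perfect) at a node of $\xX_0$. The resolution must exploit that $D$ was extended as a \emph{flat} family whose central fibre $D_0$ avoids the nodes of $\xX_0$, that $Z(s^{\ast}_t)\le D^{\ast}_t$ forces the closure $\overline{Z(s^{\ast})}$ (hence the limiting torsion) to avoid those nodes as well, and that for generic $g$ the formation of the degeneracy divisor commutes with the limit, so that $\overline{D^{\ast}}$, the flat limit $D_0$, and $Z$ of the extended dual map all agree on $\xX_0$. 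This is precisely the point where the good choice of limiting curve $\xX_0$ dictated by the weighted-stable/log-canonical structure of the second step is used; it is the analogue, for the completeness half of the criterion, of the log canonical model comparison exploited in Proposition~\ref{qstable}. A technically convenient reduction here (as in the proof of Proposition~\ref{qstable}) is to arrange, after base change and normalisation, that the general fibres of $\xX^{\ast}\to\Delta^{\ast}$ are smooth irreducible curves, so that $\xX$ is integral and its total space may be taken smooth along the nodes of $\xX_0$.
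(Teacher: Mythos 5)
Your proposal follows essentially the same strategy as the paper: dualize the kernel, pick a general $g\in\mathbb{G}(r,n)$ to get the degeneracy divisor $D^{\ast}$, interpret $(\xX^{\ast};p_{\bullet}^{\ast};D^{\ast})$ as an $a(m,d,\epsilon)$-weighted pointed stable family, extend the curve via properness of $\overline{M}^{\epsilon}_{g,m|d}$, extend the quotient via properness of the relative Quot scheme, and then verify $\epsilon$-stability of $q_0$ by matching the degeneracy locus of the extended kernel with $D_0$. One caveat: the step you flag as the ``main obstacle'' is left as a sketch, and the intermediate claim that $Z(s^{\ast}_t)\le D^{\ast}_t$ by itself forces the closure of $Z(s^{\ast})$ to avoid the nodes of $\xX_0$ is not how the paper closes the gap --- the paper instead first shows $\sS$ is reflexive (a push-forward from the complement $U$ of the nodes of $\xX_0$), proves $Z(s_g|_U)$ closes up to $D$, and then rules out extra support of $\Cok(s_g)$ at nodes via torsion-freeness of $\sS^{\vee}$, before deducing local freeness of $\qQ_0$ near nodes and markings from the $\mathcal{E}xt$-sequence.
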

\begin{proof}
As in the proof of Proposition~\ref{qstable}, 
we may assume that 
 the general fibers of $\pi^{\ast}$ are 
non-singular irreducible curves.
Let $\sS^{\ast}$ be the kernel of $q^{\ast}$. 
Taking the dual of the inclusion
$\sS^{\ast} \subset \oO_{\xX^{\ast}}^{\oplus n}$, 
we obtain the morphism 
\begin{align*}
s^{\ast}\colon \oO_{\xX^{\ast}}^{\oplus n}
\to \sS^{\ast \vee}.
\end{align*}
We choose a general point,
\begin{align}\label{choose}
g\in \mathbb{G}(r, n),
\end{align}
and set 
$D^{\ast}\cneq Z(s^{\ast}_{g}) \subset \xX^{\ast}$. 
As in the proof of Proposition~\ref{qstable},
the $\epsilon$-stability implies that the pair 
\begin{align}\label{indeed}
(\xX^{\ast}, \sum_{j=1}^{m}p_{j}^{\ast}+\epsilon \cdot 
D^{\ast})
\end{align}
is a log canonical model over $\Delta^{\ast}$. 

Indeed, the family (\ref{indeed}) 
can be interpreted as a family of Hassett's weighted 
pointed stable curves~\cite{BH}. 
Let us write 
\begin{align*}
D^{\ast}=\sum_{j=1}^{k}m_j D_j^{\ast},
\end{align*}
for distinct irreducible divisors $D_j^{\ast}$ and $m_j \ge 1$. 
Since the family (\ref{given}) is of type $(r, n, d)$, we have 
\begin{align*}
m_1+ m_2+ \cdots +m_k=d. 
\end{align*}
By shrinking $\Delta$ if necessary, we may assume that 
each $D_j^{\ast}$ is a section of $\pi^{\ast}$. 
Then the data
\begin{align}\label{fam:wei}
(\pi^{\ast} \colon \xX^{\ast}\to \Delta^{\ast}, 
p_1^{\ast}, \cdots, p_m^{\ast}, 
\overbrace{D_1^{\ast}, \cdots, D_1^{\ast}}^{m_1}, \cdots, 
\overbrace{D_k^{\ast}, \cdots, D_k^{\ast}}^{m_k}), 
\end{align}
is a family of 
$a(m, d, \epsilon)$-stable $m+d$-pointed curves~\cite{BH}
over $\Delta^{\ast}$. 
(See Definition~\ref{def:weighted} and (\ref{amd}).)
By the properness of $\overline{M}_{g, m|d}^{\epsilon}$, 
(cf.~\cite{BH}, (\ref{denoted}),)
there is a family of $a(m, d, \epsilon)$-stable
$m+d$-pointed curves over $\Delta$, 
\begin{align}\label{a(mde)}
(\pi \colon \xX \to \Delta, 
p_1, \cdots, p_m, 
\overbrace{D_1, \cdots, D_1}^{m_1}, \cdots, 
\overbrace{D_k, \cdots, D_k}^{m_k}), 
\end{align}
which is isomorphic to the family (\ref{fam:wei}) over $\Delta^{\ast}$. 
In particular we have an extension of $D^{\ast}$ to
$\xX$, 
\begin{align*}
D=\sum_{j=1}^{k}m_j D_j,  \quad 
D|_{\xX^{\ast}}=D^{\ast}.  
\end{align*}

By the properness of the relative Quot scheme, there is an 
exact sequence 
\begin{align}\label{Quot-ex}
0 \to \sS \to \oO_{\xX}^{\oplus n}\stackrel{q}{\to} \qQ \to 0, 
\end{align}
such that $q$ is isomorphic to $q^{\ast}$ over $\Delta^{\ast}$. 
Restricting to $\xX_0$, we obtain the exact sequence, 
\begin{align}\label{seq:res}
0 \to \sS_0 \to \oO_{\xX_0}^{\oplus n} \stackrel{q_0}{\to} \qQ_0 \to 0. 
\end{align}
We claim that the quotient
$q_0$ is 
an $\epsilon$-stable quotient, hence 
the family $(\xX, p_1, \cdots, p_m)$ and $q$ 
gives a desired extension (\ref{ext-fami}). 
We prove the following lemma. 
\begin{lem}
The sheaf $\sS$ is a locally free sheaf on $\xX$. 
\end{lem}
\begin{proof}
First we see that 
the sheaf $\sS$ is reflexive, i.e. $\sS^{\vee \vee}\cong \sS$. 
We have the morphism of exact sequence of sheaves on $\xX$, 
\begin{align*}
\xymatrix{0 \ar[r] &
\sS \ar[r]\ar[d] & \oO_{\xX}^{\oplus n} \ar[r]\ar[d] & 
\qQ \ar[r]\ar[d] & 0 \\
0 \ar[r] &
\sS^{\vee \vee} \ar[r] & \oO_{\xX}^{\oplus n} \ar[r] & \qQ' \ar[r]
& 0,
}
\end{align*}
where the left arrow is an injection. 
By the snake lemma, there is an inclusion, 
\begin{align*}
\sS^{\vee \vee}/\sS \hookrightarrow \qQ, 
\end{align*}
and $\sS^{\vee \vee}/\sS$ is supported on $\xX_0$, 
which contradicts to that $\qQ$ is flat over $\Delta$. 
In particular setting 
\begin{align*}
U=\xX \setminus (\mbox{nodes of }\xX_0),
\end{align*}
the sheaf $\sS$ is a push-forward of some locally free 
sheaf on $U$ to $\xX$. 
We only need to check that $\sS$ is free at nodes on 
$\xX_0$. 

Taking the dual of the inclusion
$\sS \hookrightarrow \oO_{\xX}^{\oplus n}$ and 
composing with $g\colon \oO_{\xX}^{\oplus r} \hookrightarrow 
\oO_{\xX}^{\oplus n}$, 
where $g$ is taken in (\ref{choose}), 
we obtain a morphism 
\begin{align*}
s_g \colon \oO_{\xX}^{\oplus r} \stackrel{g}{\hookrightarrow}
\oO_{\xX}^{\oplus n} \to \sS^{\vee}. 
\end{align*}
Restricting to $U$, we obtain the divisor in $U$,
\begin{align*}
D^{\dag}_{U}\cneq Z(s_g|_{U}) \subset U,
\end{align*}
and the closure of $D^{\dag}_{U}$ in $\xX$ is 
denoted by $D^{\dag}$. We have the following. 
\begin{itemize}
\item By the construction, we have 
$D|_{\xX^{\ast}}=D^{\dag}|_{\xX^{\ast}}$.
\item Replacing $g$ by another general point 
in $\mathbb{G}(r, n)$ if necessary, the divisors $D^{\dag}$
and $D$
do not contain any irreducible component of $\xX_0$. 
\end{itemize}
These properties imply that $D^{\dag}=D$. 
Noting that the divisor $D$ has support away from 
nodes of $\xX_0$, the support of the cokernel 
of $s_g$ is written as 
\begin{align}\label{supp:cok}
\Supp \Cok(s_g)=\Supp(D) \amalg V, 
\end{align}
where $V$ is a finite set of points contained in 
the nodes of $\xX_0$. However if $V$ is non-empty, then 
there is a nodal point $x\in \xX_{0}$ and an injection 
$\oO_x \hookrightarrow \sS^{\vee}$, which 
contradicts to that $\sS$ is torsion free. 
Therefore $V$ is empty, and the morphism $s_g$ is 
isomorphic on nodes of $\xX_0$. 
Hence $\sS^{\vee}$ is a locally free sheaf on $\xX_{0}$,
 and the sheaf $\sS$
is also locally free
since $\sS \cong \sS^{\vee \vee}$. 
\end{proof}
Note that the locally freeness of $\sS$ implies 
that the divisor $Z(s_g)$
is well-defined, and the 
proof of the above lemma 
immediately implies that 
\begin{align}\label{Z=D}
Z(s_g)=D^{\dag}=D. 
\end{align}

Next let us see that $q_0$ is a quasi-stable quotient. 
Taking $\hH om(\ast, \oO_{\xX_0})$ to 
the exact sequence (\ref{seq:res}), we obtain 
the exact sequence, 
\begin{align*}
0 \to \qQ_0^{\vee} \to \oO_{\xX_0}^{\oplus n} \stackrel{s_0}{\to} \sS_0^{\vee} 
\to \eE xt^1_{\xX_0}(\qQ_0, \oO_{\xX_0}) \to 0,
\end{align*}
and the vanishing $\eE xt^{i}_{\xX_0}(\qQ_0, \oO_{\xX_0})=0$
for $i\ge 2$. 
We have the surjection, 
\begin{align}\label{RHSsur}
\Cok(s_{0, g}) \twoheadrightarrow \eE xt^{1}_{\xX_0}(\qQ_0, \oO_{\xX_0}),
\end{align}
and the LHS of (\ref{RHSsur}) has support away from 
nodes and markings by (\ref{Z=D}). 
Therefore for a nodal point or marked point 
$p\in \xX_{0}$, we have 
\begin{align*}
\eE xt^{i}_{\xX_0}(\qQ_0, \oO_{\xX_0})_{p}=0, \quad 
i\ge 1, 
\end{align*}
which implies that $\qQ_0$ is locally free at 
$p$, i.e. 
$q_0 \colon \oO_{\xX_0}^{\oplus n} \twoheadrightarrow \qQ_0$
is a quasi-stable quotient. 

Finally we check the $\epsilon$-stability of $q_0$. 
The ampleness of $\lL(q_{0}, \epsilon)$ is equivalent 
to the ampleness of the divisor, 
\begin{align}\label{eq:div}
K_{\xX_0}+p_1(0)+\cdots +p_m(0)+\epsilon \cdot Z(s_{g, 0}). 
\end{align}
Noting the equality (\ref{Z=D}), we have 
$Z(s_{g, 0})=D|_{\xX_0}$. 
Since the data (\ref{a(mde)}) is a family 
of $a(m, d, \epsilon)$-stable curves, 
the divisor (\ref{eq:div}) on $\xX_0$ is ample. 
Also the surjection (\ref{RHSsur}) 
and the fact $Z(s_{g, 0})=D|_{\xX_0}$ imply that 
\begin{align}\notag
\epsilon \cdot \length \tau(\qQ_0)_{p} &\le \epsilon \cdot
\length \Cok(s_{g, 0})_{p} \\
\notag
&= \epsilon \cdot \length \oO_{Z_{s_{g, 0}}, p}, \\
\label{length}
&= \epsilon \cdot \length \oO_{D}|_{\xX_0, p},
\end{align}
for any $p\in \xX_0$. Again 
noting that
(\ref{a(mde)}) is a family of $a(m, d, \epsilon)$-stable 
curves, we 
conclude that $(\ref{length}) \le 1$. 
Therefore $q_0$ is an $\epsilon$-stable quotient. 
\end{proof}

\section{Wall-crossing formula}\label{sec:cycle}
The purpose of this section is to give an 
argument to prove Theorem~\ref{thm:wcf}. 
Our strategy is to modify ~\cite[Secton~7]{MOP}
so that $\epsilon$
is involved in the argument. 
Therefore we only focus 
on the arguments to be modified, 
and we leave several details to the reader. 
\subsection{Localization}\label{subsec:locl}
Let $T$ be a torus $T=\mathbb{G}_m^{n}$ 
acting on $\mathbb{C}^n$ via 
\begin{align*}
(t_1, \cdots, t_n) \cdot (x_1, \cdots, x_n)
=(t_1x_1, \cdots, t_n x_n). 
\end{align*}
The above $T$-action induces a $T$-action on 
$\mathbb{G}(r, n)$ and $\overline{Q}_{g, m}^{\epsilon}(\mathbb{G}(r, n), d)$. 
Over the moduli space of
MOP-stable quotients, the $T$-fixed 
loci are obtained in~\cite[Section~7]{MOP}
via certain combinatorial data. 
The $T$-fixed loci of $\epsilon$-stable quotients are 
similarly obtained, but we need to take 
the $\epsilon$-stability into consideration. 
 They are indexed by the following data, 
\begin{align}\label{ind:graph}
\theta=(\Gamma, \iota, \gamma, s, \beta, \delta, \mu).
\end{align}
\begin{itemize}
\item $\Gamma=(V, E)$ is a connected graph, 
 where $V$ is the vertex set and $E$ is 
the edge set with no self edges.  
\item $\iota$ is an 
assignment of an inclusion,
\begin{align*}
\iota_{v}\colon \{1, \cdots, r\} \to \{1, \cdots, n\}, 
\end{align*}
to each $v\in V$. In particular, the induced subspace
$\mathbb{C}^{r} \hookrightarrow \mathbb{C}^n$
by $\iota_{v}$ determines a map, 
\begin{align*}
\nu \colon V \to \mathbb{G}(r, n)^{T}.
\end{align*}
\item $\gamma$ is a genus assignment $\gamma \colon V\to \mathbb{Z}_{\ge 1}$, satisfying 
\begin{align*}
\sum_{v\in V}\gamma(v)+h^{1}(\Gamma)=g. 
\end{align*}
\item For each $v\in V$, 
$s(v)=(s_1(v), \cdots, s_r(v))$ with $s_i(v)\in \mathbb{Z}_{\ge 0}$. 
We set 
\begin{align*}
{\bf s}(v)=\sum_{i=1}^{r}s_i(v).
\end{align*}
\item $\beta$ is an assignment to each $e\in E$ of a $T$-invariant 
curve $\beta(e)$ of $\mathbb{G}(r, n)$.
The two vertices incident to 
$e\in E$ are mapped via $\nu$ to the two $T$-fixed points incident 
to $\beta(e)$. 
\item $\delta \colon E \to \mathbb{Z}_{\ge 1}$ is an assignment 
of a covering number, satisfying 
\begin{align*}
\sum_{v\in V}{\bf s}(v)+\sum_{e\in E}\delta(e)=d. 
\end{align*} 
\item $\mu$ is a distribution of the $m$-markings to the vertices of $V$. 
\item For each $v\in V$, we set 
\begin{align*}
w(v)=\min\{0, 2\gamma(v)-2+\epsilon \cdot {\bf s}(v)+\valence(v)  \}. 
\end{align*}
Then for each edge $e\in E$ with incident 
vertex $v_1, v_2 \in V$, we have 
\begin{align}\label{graph:ample}
\epsilon \cdot \delta(e)+w(v_1)+w(v_2)>0. 
\end{align}
\end{itemize}
The condition (\ref{graph:ample}) 
corresponds to the
ampleness of (\ref{def:L}) 
 at the irreducible component determined by $e$. 
Given a data $\theta$ as in (\ref{ind:graph}),
the isomorphism classes of $T$-fixed 
$\epsilon$-stable quotients indexed by $\theta$ form
a product of the quotients of the moduli spaces of 
weighted pointed stable curves, 
\begin{align}\label{wei:poi}
Q^{T}(\theta)=
\prod_{v\in V}\left(\overline{M}_{\gamma(v), \valence(v)|{\bf s}(v)}^{\epsilon}/\Pi_{i=1}^{r}S_{s_i(v)}\right). 
\end{align}
Here if $v\in V$ does not satisfy the condition, 
\begin{align}\label{pos}
2\gamma(v)-2+\epsilon \cdot s(v)+\valence(v) >0, 
\end{align}
we set 
\begin{align*}
\overline{M}_{\gamma(v), \valence(v)|{\bf s}(v)}^{\epsilon}=
\left\{ 
\begin{array}{cc}
\Spec \mathbb{C}, & V \neq \{ v \}, \\
\emptyset, & V=\{v\}.
\end{array} \right.
\end{align*}
The corresponding $T$-fixed $\epsilon$-stable quotients 
are described in the following way. 
\begin{itemize}
\item For $v\in V$, suppose that the condition 
(\ref{pos}) holds. 
A point in the $v$-factor of (\ref{wei:poi})
determines a curve $C_v$ and $r$-tuple of divisors on it
$D_1, \cdots, D_r$ with $\deg(D_i)=s_i(v)$. 
Then an $\epsilon$-stable quotient is obtained by 
the exact sequence,
\begin{align}\label{tuple}
0 \to \oplus_{i=1}^{r}\oO_{C_v}(-D_i) \to \oO_{C_v}^{\oplus n} \to Q \to 0. 
\end{align}
Here the first inclusion is the composition of the 
natural inclusion 
\begin{align*}
\oplus_{i=1}^{r}\oO_{C_v}(-D_i) \hookrightarrow \oO_{C_v}^{\oplus r},
\end{align*}
and the inclusion $\oO_{C_v}^{\oplus r} \hookrightarrow \oO_{C_v}^{\oplus n}$
induced by $\iota_v$. 
\item For $e\in E$, 
 consider the degree $\delta(e)$-covering
ramified over the two torus fixed points, 
\begin{align}\label{f_e}
f_e \colon C_e \to \beta (e) \subset \mathbb{G}(r, n). 
\end{align}
Note that $f_e$ is a finite map 
between projective lines. 
We obtain the exact sequence, 
\begin{align}\label{taut}
0\to S \to \oO_{C_e}^{\oplus n} \stackrel{q}{\to} Q \to 0,
\end{align}
hence a quotient $q$, 
 by pulling back the tautological 
sequence on $\mathbb{G}(r, n)$ to $C_e$.
Let $v$ and $v'$ be the two vertices 
incident to $e$, 
and $x, x' \in C_e$ the corresponding 
ramification 
points respectively. 
We have the following cases. 

(i) Suppose that both of $v$ and $v'$ 
 satisfy (\ref{pos}). 
Then we take the quotient $q$. 

(ii) Suppose that exactly one of
$v$ or $v'$, say $v$,  
 does not satisfy (\ref{pos}). 
For simplicity, we assume that $\iota_{v}(j)=j$
for $1\le j \le n$, and
\begin{align*}
\iota_{v'}(j)=j, \quad 1\le j\le r-1, \quad 
\iota_{v'}(r)=r+1.
\end{align*} 
Then the exact sequence (\ref{taut}) is identified 
with the sequence,  
\begin{align}\label{iden:seq}
0 \to \oO_{C_e}^{\oplus r-1} \oplus \oO_{C_e}(-\delta(e)) \to 
\oO_{C_e}^{\oplus n} \to \oO_{C_e}(\delta(e)) 
\oplus \oO_{C_e}^{\oplus n-r-1} \to 0. 
\end{align}
Here the embedding 
\begin{align*}
\oO_{C_e}^{\oplus r-1} \oplus \oO_{C_e}(-\delta(e)) 
\subset \oO_{C_e}^{\oplus n},
\end{align*}
is the composition, 
\begin{align*}
\oO_{C_e}^{\oplus r-1} \oplus \oO_{C_e}(-\delta(e))
\subset \oO_{C_e}^{\oplus r-1}\oplus \oO_{C_e}^{\oplus 2}
\subset \oO_{C_e}^{\oplus n},
\end{align*}
where the first embedding is the direct sum of the identity and 
the pull-back of the tautological embedding via $f_e$, and 
the second one is the embedding into the first $r+1$-factors. 
Composing the embedding 
\begin{align*}
0 \to \bigoplus_{i=1}^{r-1} \oO_{C_e}(-s_i(v)x) \oplus 
\oO_{C_e}(-s_r(v)x-\delta(e)) \to 
\oO_{C_e}^{\oplus r-1} \oplus \oO_{C_e}(-\delta(e)),
\end{align*}
with the sequence (\ref{iden:seq}), we obtain the 
exact sequence, 
\begin{align*}
0 \to \bigoplus_{i=1}^{r-1} \oO_{C_e}(-s_i(v)x) \oplus 
\oO_{C_e}(-s_r(v)x-\delta(e)) \to \oO_{C_e}^{\oplus n} 
\stackrel{q'}{\to} Q' \to 0.
\end{align*}
Then we take the quotient $q'$. 

(iii) Suppose that both $v$ and $v'$ 
do not satisfy (\ref{pos}). 
Then as above, we take the exact sequence, 
\begin{align*}
0 \to \bigoplus_{i=1}^{r-1} \oO_{C_e}(-s_i(v)x-s_i(v')x') \oplus 
& \oO_{C_e}(-s_r(v)x-s_r(v')x'-\delta(e)) \\
& \to \oO_{C_e}^{\oplus n} 
\stackrel{q''}{\to} Q'' \to 0, 
\end{align*}
and we take the quotient $q''$. 
\end{itemize}
By gluing the above quotients, we obtain 
a curve $C$ and a quotient from $\oO_C^{\oplus n}$. 
The condition (\ref{graph:ample}) ensures 
that the resulting quotient is $\epsilon$-stable. 
\subsection{Virtual localization formula}\label{sub:vir}
Let $Q^{T}(\theta)$ be
the $T$-fixed locus (\ref{wei:poi}), and 
$i_{\theta}$ the inclusion, 
\begin{align*}
i_{\theta}\colon Q^{T}(\theta) \hookrightarrow 
\overline{Q}_{g, m}^{\epsilon}(\mathbb{G}(r, n), d). 
\end{align*}
We denote by $N^{\vir}(\theta)$ the virtual normal bundle of 
$Q^{T}(\theta)$ in $\overline{Q}_{g, m}^{\epsilon}(\mathbb{G}(r, n), d)$.
The virtual localization formula~\cite{GP}
in this case is written as 
\begin{align}\label{vloc}
[\overline{Q}_{g, m}^{\epsilon}(\mathbb{G}(r, n), d)]^{\vir}
&=\sum_{\theta}i_{\theta !}\left(\frac{[Q^{T}(\theta)]}{{\bf e}
(N^{\vir}(\theta))}\right) \\ \notag
& 
\in A_{\ast}^{T}(\overline{Q}_{g, m}^{\epsilon}
(\mathbb{G}(r, n), d), \mathbb{Q})\otimes_{R} \mathbb{Q}(\lambda_1, \cdots, \lambda_n). 
\end{align}
Here $R$ is the equivariant Chow ring of a point
with respect to the trivial $T$-action, 
\begin{align*}
R=\mathbb{Q}[\lambda_1, \cdots, \lambda_n], 
\end{align*}
with $\lambda_i$ equivariant parameters. 

Let $v$ be a vertex in the data (\ref{ind:graph})
which satisfies the condition (\ref{pos}). 
We see the contribution of $v$ to the RHS of (\ref{vloc}). 
For simplicity, we assume that $\iota_v(j)=j$ for $1\le j\le r$. 
The vertex $v$ corresponds to the space
\begin{align*}
\overline{M}_{\gamma(v), \valence(v)|{\bf s}(v)}^{\epsilon}/\Pi_{i=1}^{r}
S_{s_i(v)}.
\end{align*}
Similarly to the sequence (\ref{tuple}), each 
point on the above space 
corresponds to an exact sequence, 
\begin{align*}
0 \to S=\oplus_{i=1}^{r}S_i \to \oO_{C}^{\oplus n} \to Q \to 0, 
\end{align*}
for $S_i=\oO_{C_v}(-D_i)$ with $\deg(D_i)=s_i(v)$,
and $\valence(v)$-marked points. 
The exact sequence (\ref{tanob}) and the argument of~\cite[Section~7]{MOP}
show that the contribution of the vertex $v$ is 
\begin{align}\label{cont1}
\Cont(v)&= \frac{{\bf e}(\mathbb{E}^{\ast}\otimes T_{\nu(v)})}{{\bf e}(T_{\nu(v)})}
\frac{1}{\prod_{e}\frac{\lambda(e)}{\delta(e)}-\psi_e} \\
\label{cont2}
& \quad \frac{1}{\prod_{i\neq j}{\bf e}(H^0(O_C(S_i)|_{S_j})\otimes [\lambda_j-\lambda_i])} \cdot \\
\label{cont3}
& \quad \frac{1}{\prod_{i\neq j^{\ast}}{\bf e}(H^0(O_C(S_i)|_{S_i})\otimes 
[\lambda_{j^{\ast}}-\lambda_i])}.
\end{align}
Here each factor is as follows. 
\begin{itemize}
\item 
The symbol ${\bf e}$ denotes the Euler class, $T_{\nu(v)}$ is the $T$-representation on the tangent space of $\mathbb{G}(r, n)$ at $\nu(v)$, and $\mathbb{E}$ is the Hodge bundle, 
\begin{align}\label{Hodge}
\mathbb{E} \to \overline{M}_{\gamma(v), \valence(v)|{\bf s}(v)}^{\epsilon}. 
\end{align}
\item The product in the denominator of (\ref{cont1}) is over all half-edges $e$ incident to $v$. 
The factor $\lambda(e)$ denotes the $T$-weight of the tangent representation 
along the corresponding $T$-fixed edge, and $\psi_e$ 
is the first chern class of the cotangent line at the corresponding marking of
$\overline{M}_{\gamma(v), \valence(v)|{\bf s}(v)}^{\epsilon}$. 
(See (\ref{1stch}) below.)
\item The products in (\ref{cont2}), (\ref{cont3}) 
satisfy the following conditions, 
\begin{align*}
1\le i\le r, \quad 1 \le j \le r, \quad r+1 \le j^{\ast} \le n. 
\end{align*}
The brackets $[\lambda_j-\lambda_i]$ denotes the 
trivial bundle with specified weights. 
\end{itemize}

The same argument 
describing $\Cont(v)$ as above is also 
applied to see the contribution term of the edge
$e$ to the formula (\ref{vloc}). 
However we do not need to know 
its precise formula, and it is enough to notice that 
\begin{align*}
\Cont(e) \in \mathbb{Q}(\lambda_1, \cdots, \lambda_n). 
\end{align*}
The above fact is easily seen by the description of the 
$T$-fixed $\epsilon$-stable quotients in the last 
subsection. 
Then the RHS of (\ref{vloc}) is the sum of the products, 
\begin{align*}
\sum_{\theta}i_{\theta !}\left(
\prod_{e}\Cont(e)\prod_{v}\Cont(v)[Q^{T}(\theta)]\right). 
\end{align*}
\subsection{Classes on $\overline{M}_{g, m|d}^{\epsilon}$}
\label{subsec:Class}
As we have seen, each term of the virtual 
localization formula is a class on the moduli space of weighted 
pointed stable curves $\overline{M}_{g, m|d}^{\epsilon}$. 
The relevant classes on $\overline{M}_{g, m|d}^{\epsilon}$ for a 
sufficiently small $\epsilon$
is discussed in~\cite[Section~4]{MOP}. 
For arbitrary $0<\epsilon \le 1$
the similar classes are also available, which we 
recall here. 

For every subset $J\subset \{1, \cdots, d\}$ of size at least $2$, 
there is a diagonal class, 
\begin{align*}
D_{J} \in A^{|J|-1}(\overline{M}_{g, m|d}^{\epsilon}, \mathbb{Q}), 
\end{align*}
corresponding to the weighted pointed stable curves 
\begin{align*}
(C, p_1, \cdots, p_m, \widehat{p}_1, \cdots, \widehat{p}_d)
\end{align*}
satisfying 
\begin{align*}
\widehat{p}_j=\widehat{p}_{j'}, \quad j, j' \in J. 
\end{align*}
Note that $D_J=0$ if $\epsilon \cdot \lvert J \rvert >1$. 

Next we have the cotangent line bundles, 
\begin{align*}
\mathbb{L}_i \to \overline{M}_{g, m|d}^{\epsilon}, \quad
\widehat{\mathbb{L}}_j \to \overline{M}_{g, m|d}^{\epsilon},
\end{align*}
for $1\le i\le m$ and $1\le j \le d$, corresponding to 
the respective markings. 
We have the associated first chern classes, 
\begin{align}\label{1stch}
\psi_{i}=c_1(\mathbb{L}_i), \
\widehat{\psi}_{j}=c_1(\widehat{\mathbb{L}}_j) \in
 A^1(\overline{M}_{g, m|d}^{\epsilon}, \mathbb{Q}). 
\end{align}
The above classes are related as follows. 
For a subset $J\subset \{1, \cdots, d\}$, the class
\begin{align}\label{depJ}
\widehat{\psi}_{J}\cneq \widehat{\psi}_{j}|_{D_J},
\end{align}
does not depend on $j\in J$. If $J$ and $J'$ have non-trivial 
intersections, it is easy to see that 
\begin{align}\label{psiD}
D_{J}\cdot D_{J'}=(-\widehat{\psi}_{J\cup J'})^{\lvert J\cap J' \rvert -1}
D_{J\cup J'}. 
\end{align}
By the above properties, we obtain the notion of \textit{canonical forms},
(cf.~\cite[Section~4]{MOP},)
for any monomial $M(\widehat{\psi}_j, D_J)$ of 
$\widehat{\psi}_j$ and $D_J$. 
It is obtained as follows. 
\begin{itemize}
\item We multiply the classes $D_J$ using the formula (\ref{psiD}) until 
we obtain the product of classes $\widehat{\psi}_j$ and $D_{J_1}D_{J_2}\cdots D_{J_l}$ with all $J_i$ disjoint. 
\item Using (\ref{depJ}), we collect the equal cotangent classes. 
\end{itemize}
By extending the above operation linearly, we
obtain the canonical form for any polynomial 
$P(\widehat{\psi}_j, D_{J})$. 

\subsection{Standard classes under change of $\epsilon$}
For $\epsilon \ge \epsilon'$, recall that there is a 
birational morphism, (cf.~(\ref{nat:bir}), Remark~\ref{rmk:coin},) 
\begin{align*}
c_{\epsilon, \epsilon'} \colon \overline{M}_{g, m|d}^{\epsilon}
\to \overline{M}_{g, m|d}^{\epsilon'}. 
\end{align*}
For simplicity, we write $c_{\epsilon, \epsilon'}$ as $c$. 
Then we have the following, 
\begin{align}\label{form1}
c^{\ast}\psi_i &=\psi_i, \quad 1\le i\le m, \\
\label{form2}
c^{\ast}\widehat{\psi}_j &=\widehat{\psi}_j -\Delta_j, \quad 1\le j\le d. 
\end{align}
Here $\Delta_{j}$ is given by 
\begin{align}\label{sum:delta}
\Delta_{j}=\sum_{j\in J\subset \{1, \cdots, d\}}\Delta_{J}, 
\end{align}
where $\Delta_{J} \subset \overline{M}_{g, m|d}^{\epsilon}$ 
correspond to curves 
\begin{align*}
C=C_1\cup C_2, \quad g(C_1)=0, \quad g(C_2)=g, 
\end{align*}
with a single node which separates $C_1$ and $C_2$, and 
the markings of $J$ are distributed to $C_1$. 
The subsets $J$ in the sum (\ref{sum:delta}) should satisfy, 
\begin{align*}
&\epsilon \cdot \lvert J \rvert -1>0, \\
&\epsilon' \cdot \lvert J \rvert -1 \le 0. 
\end{align*}
Applying (\ref{form1}), (\ref{form2}) and the 
projection formula, we obtain the universal formula,
\begin{align}\label{univ:form}
c_{\ast}\left(\prod_{i=1}^{m}\psi_i^{m_i}\prod_{j=1}^{d}\widehat{\psi}_j^{n_j}
 \right)=\prod_{i=1}^{m}\psi_i^{m_i}
 \left(\prod_{j=1}^{d}\widehat{\psi}_j^{n_j}+
\cdots \right). 
\end{align}
If $\epsilon=1$ and $0<\epsilon' \ll 1$, the above 
formula coincides with the formula obtained in~\cite[Lemma~3]{MOP}. 

Also the Hodge bundle (\ref{Hodge}) satisfies 
\begin{align}\label{sat:Hodge}
c^{\ast}\mathbb{E}\cong \mathbb{E},
\end{align}
since $c$ contracts only rational tails. 
\subsection{The case of genus zero}
In genus zero, note that the moduli space 
\begin{align*}
\overline{Q}_{0, m}^{\epsilon}(\mathbb{G}(r, n), d)
\end{align*}
is non-singular by Lemma~\ref{lem:nonsing}. 
 If it is also connected, then 
it is irreducible and there is a birational map, 
\begin{align*}
\overline{Q}_{0, m}^{\epsilon_1}(\mathbb{G}(r, n), d)
\dashrightarrow \overline{Q}_{0, m}^{\epsilon_2}(\mathbb{G}(r, n), d),
\end{align*}
as long as $\epsilon_i>(2-m)/d$. 
In fact, we have the following. 
\begin{lem}\label{lem:eqconn}
The moduli stack 
\begin{align}\label{eq:conn}
\overline{Q}_{g, m}^{\epsilon}(\mathbb{G}(r, n), d)
\end{align}
is connected. 
\end{lem}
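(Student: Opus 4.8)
The plan is to deduce connectedness of $\overline{Q}_{g,m}^{\epsilon}(\mathbb{G}(r,n),d)$ from the connectedness of the space of stable maps $\overline{M}_{g,m}(\mathbb{G}(r,n),d)$ --- a well-known fact (and $\overline{M}_{0,m}(\mathbb{G}(r,n),d)$ is even irreducible) --- by exhibiting, for each $\epsilon$, a \emph{surjective} morphism onto $\overline{Q}_{g,m}^{\epsilon}(\mathbb{G}(r,n),d)$ from a space already seen to be connected; recall that a continuous image of a connected space is connected. By Proposition~\ref{prop:wall} and Proposition~\ref{wall2} it suffices to treat the finitely many wall values $\epsilon=\epsilon_i$, and by Lemma~\ref{lem:empty} the case $(g,m)=(0,0)$, $\epsilon\leq 2/d$ is empty; I would run the argument for $(g,m)\neq(0,0)$ and then note that, after discarding the empty range via Lemma~\ref{lem:empty}, the same steps apply to $(g,m)=(0,0)$.

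First, for $\epsilon$ in the top chamber (in particular $\epsilon>2$), Theorem~\ref{thm:cross}(i) together with Remark~\ref{e1} gives $\overline{Q}_{g,m}^{\epsilon}(\mathbb{G}(r,n),d)\cong\overline{M}_{g,m}(\mathbb{G}(r,n),d)$, which is connected. Next, for $\epsilon=1$, Lemma~\ref{lem:worth} provides a surjective morphism $c'\colon\overline{M}_{g,m}(\mathbb{G}(r,n),d)\to\overline{Q}_{g,m}^{1}(\mathbb{G}(r,n),d)$, so $\overline{Q}_{g,m}^{1}(\mathbb{G}(r,n),d)$ is connected. Finally, for the wall values $\epsilon=\epsilon_i<1$: when $r=1$, Proposition~\ref{r=1} and Lemma~\ref{lem:surj} give a surjection $\overline{Q}_{g,m}^{1}(\mathbb{P}^{n-1},d)\to\overline{Q}_{g,m}^{\epsilon_i}(\mathbb{P}^{n-1},d)$, so the latter is a surjective image of the connected $\overline{M}_{g,m}(\mathbb{P}^{n-1},d)$; when $r>1$, I would instead use the Pl\"ucker embedding $\iota^{\epsilon}$ of the preceding Lemma together with the rank-one wall-crossing $c_{1,\epsilon_i}$ on $\overline{Q}_{g,m}^{\bullet}(\mathbb{G}(1,\dbinom{n}{r}),d)$: as in the diagram preceding Theorem~\ref{thm:wcf}, $c_{1,\epsilon_i}$ maps the decomposable locus $\iota^{1}(\overline{Q}_{g,m}^{1}(\mathbb{G}(r,n),d))$ onto the decomposable locus $\iota^{\epsilon_i}(\overline{Q}_{g,m}^{\epsilon_i}(\mathbb{G}(r,n),d))$, the surjectivity being extracted from the Cartesian descriptions (\ref{Car}), (\ref{dig:Car}) of the wall-crossing and the surjectivity of the evaluation maps $\overline{Q}_{0,1}^{\epsilon}(\mathbb{G}(r,n),d_i)\to\mathbb{G}(r,n)$; hence $\overline{Q}_{g,m}^{\epsilon_i}(\mathbb{G}(r,n),d)$ is a surjective image of the connected $\overline{Q}_{g,m}^{1}(\mathbb{G}(r,n),d)$.

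The hard part is this last step for $r>1$: because there is no morphism between $\overline{M}_{g,m}(\mathbb{G}(r,n),d)$ and $\overline{Q}_{g,m}(\mathbb{G}(r,n),d)$ when $r>1$, one is forced through the Pl\"ucker embedding, and one must check that the rank-one contraction $c_{1,\epsilon_i}$ genuinely preserves and surjects onto the decomposable locus --- the surjectivity being the delicate point, since it amounts to reconstructing rational tails compatibly with the decomposability constraint. If that compatibility proves awkward to establish directly, the fallback is to prove connectedness the way it is done for stable maps: induct on the degree $d$, cover the boundary of $\overline{Q}_{g,m}^{\epsilon}(\mathbb{G}(r,n),d)$ by the images of the gluing morphisms (\ref{glue}) (together with the self-gluing morphisms raising the genus), and check that every boundary stratum meets the locus of quotients with smooth irreducible domain, which links it to the already-connected part.
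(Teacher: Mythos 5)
Your proposal takes a genuinely different route from the paper's, and the difference matters. The paper does not try to chain surjective morphisms: it exhibits, inside a fixed $\overline{Q}_{g,m}^{\epsilon}(\mathbb{G}(r,n),d)$, a deformation path from an arbitrary $\epsilon$-stable quotient to one pulled back from a stable map. Concretely, it flows any quotient to a $T$-fixed one via the torus action, uses the explicit structure of $T$-fixed quotients (Subsection~\ref{subsec:locl}) to split the kernel as $\oplus_{i=1}^r\oO_C(-D_i)$, deforms the $D_i$ to reduced pairwise-disjoint divisors so as to land in the locus that is also $1$-stable, and then invokes Lemma~\ref{lem:worth}. The torus flow is precisely what your approach is missing: it forces a splitting of the rank-$r$ kernel and thereby reduces the problem to a rank-one one, which is the one tractable case.

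For $r=1$ your argument is fine (Lemmas~\ref{lem:worth} and~\ref{lem:surj} do give a surjection from the connected $\overline{M}_{g,m}(\mathbb{P}^{n-1},d)$ onto $\overline{Q}_{g,m}^{\epsilon}(\mathbb{P}^{n-1},d)$). But the $r>1$ step is a genuine gap, not a delicate point to be smoothed over. As the paper itself notes at the end of Subsection~\ref{subsec:Mor}, for $r>1$ there is no natural morphism between $\overline{M}_{g,m}(\mathbb{G}(r,n),d)$ and $\overline{Q}_{g,m}(\mathbb{G}(r,n),d)$, so there is no $c_{1,\epsilon_i}$ to be surjective. Routing through the Pl\"ucker embedding does not supply one: after the rank-one contraction the new kernel line bundle is $L|_{C'}\bigl(-\sum_j(d-i+1)x_j\bigr)$, and you would have to show both that this is again of the form $\wedge^r S'$ for some rank-$r$ subbundle $S'\subset\oO_{C'}^{\oplus n}$ and that every decomposable kernel downstairs arises this way. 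The Cartesian diagrams (\ref{Car}), (\ref{dig:Car}) you cite live entirely in the rank-one world and do not see the decomposability constraint, so they cannot furnish either assertion. Your fallback (induction on $d$ via the gluing morphisms (\ref{glue})) is a third route, closer to the Kim--Pandharipande strategy, but it rests on the unproved claim that every boundary stratum meets the locus of quotients with smooth irreducible domain. If you want to repair the argument along your own lines, the paper's move is the right one: use the $T$-action to split the kernel and reduce to rank one, instead of running a rank-one wall-crossing and trying to make it remember decomposability.
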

\begin{proof}
The connectedness of the stable map moduli spaces
is proved in~\cite{KimPan}, and we reduce the connectedness of (\ref{eq:conn})
to that of the stable map moduli spaces. 
To do this, it is enough to see that any $\epsilon$-stable 
quotient $q\colon \oO_C^{\oplus n} \twoheadrightarrow Q$
is deformed to a quotient obtained by a stable map. 
By applying the $T$-action, we may assume that $q$ is 
a $T$-fixed quotient. Then $q$ fits into 
an exact sequence, 
\begin{align*}
0 \to \oplus_{i=1}^{r}\oO_C(-D_i) \to \oO_{C}^{\oplus n} \stackrel{q}{\to}
Q \to 0, 
\end{align*}
for $r$-tuple divisors $D_i$ on $C$. (See Subsection~\ref{subsec:locl}.)
By deforming $D_i$ to reduced divisors $D_i'$, we can 
deform the quotient $q$ to $q' \colon \oO_{C}\twoheadrightarrow Q'$
which is $\epsilon$-stable
for $\epsilon=1$.  
Then by Lemma~\ref{lem:worth}, we can deform $q'$ to a quotient 
corresponding to a stable map. 
\end{proof}
The smoothness of the genus zero moduli spaces
 and the above lemma show the 
formula,
\begin{align}\label{wcf:0}
c_{\epsilon, \epsilon' \ast}\iota_{\ast}^{\epsilon}
\left([\overline{Q}_{0, m}^{\epsilon}(\mathbb{G}(r, n), d)]^{\vir} \right)=
\iota_{\ast}^{\epsilon'}\left(
[\overline{Q}_{0, m}^{\epsilon'}(\mathbb{G}(r, n), d)]^{\vir}\right).
\end{align}
Hence Theorem~\ref{thm:wcf} in the genus zero case is proved. 
\subsection{Sketch of the proof of Theorem~\ref{thm:wcf}}
Under the map to 
$\overline{Q}_{g, m}^{\epsilon'}(\mathbb{G}(1, \dbinom{n}{r}), d)$, 
several rational tails on 
$\overline{Q}_{g, m}^{\epsilon}(\mathbb{G}(r, n), d)$
with small degree collapse. Also the $T$-fixed loci of 
$\overline{Q}_{g, m}^{\epsilon}(\mathbb{G}(r, n), d)$
have many splitting types of the subbundle $S$
which are collapsed. 
For a non-collapsed edge, its contribution exactly 
coincides, and we just need to show the matching on
each vertex. 

The equality (\ref{wcf:0})
implies that both sides are equal after 
$T$-equivariant localization. 
For each vertex $v$
on $\overline{Q}_{0, m}^{\epsilon}(\mathbb{G}(r, n), d)$, 
the contribution
\begin{align*}
\Cont(v) \in A_{\ast}^{T}(\overline{M}^{\epsilon}_{0, \valence(v)|
{\bf s}(v)}, \mathbb{Q})\otimes_{R}\mathbb{Q}(\lambda_1, \cdots, \lambda_n),
\end{align*}
is given in Subsection~\ref{sub:vir}.
In genus zero the Hodge bundle is trivial, and  
the class $\Cont(v)$ is easily seen to be written 
as an element, 
\begin{align*}
\Cont(v) \in \mathbb{Q}(\lambda_1, \cdots, \lambda_n)[\psi_i, \widehat{\psi}_j, D_J],
\end{align*} 
 symmetric with 
respect to the variables $\widehat{\psi}_{j}$. 
Let us take the push forward to 
$\overline{Q}_{0, m}^{\epsilon'}(\mathbb{G}(1, \dbinom{n}{r}), d)$
using (\ref{univ:form}),
and take the canonical form. (cf.~Subsection~\ref{subsec:Class}.)
At each vertex on 
$\overline{Q}_{0, m}^{\epsilon'}(\mathbb{G}(1, \dbinom{n}{r}), d)$,
the vertices and the collapsed edges
on $\overline{Q}_{0, m}^{\epsilon}(\mathbb{G}(r, n), d)$ 
contribute to the LHS of (\ref{wcf:0}) by  
the polynomial, 
\begin{align*}
L^{C}(\psi_i, \widehat{\psi}_j, D_J). 
\end{align*} 
Also 
the vertices on $\overline{Q}_{0, m}^{\epsilon'}(\mathbb{G}(r, n), d)$
with collapsed splitting types 
contribute to the RHS of (\ref{wcf:0}) by the polynomial, 
\begin{align*}
R^{C}(\psi_i, \widehat{\psi}_j, D_J).
\end{align*}
The equality (\ref{wcf:0})
implies the equality, 
\begin{align}\label{match}
L^{C}(\psi_i, \widehat{\psi}_j, D_J)=
R^{C}(\psi_i, \widehat{\psi}_j, D_J),
\end{align}
as \textit{classes}
in the equivariant Chow ring. 

Although (\ref{match}) is an equality 
after taking classes, the 
exactly same argument of~\cite[Lemma~5]{MOP}
shows that the equality (\ref{match}) 
holds as \textit{abstract polynomials}.
Also note that the
genus dependent part involving Hodge bundles (\ref{cont1})
in the virtual 
localization formula (\ref{vloc}) does not depend on $\epsilon$
by (\ref{sat:Hodge}).
Therefore the above argument   
immediately implies 
\begin{align}\label{wcf:g}
c_{\epsilon, \epsilon' \ast}\iota_{\ast}^{\epsilon}
\left([\overline{Q}_{g, m}^{\epsilon}(\mathbb{G}(r, n), d)]^{\vir} \right)=
\iota_{\ast}^{\epsilon'}\left(
[\overline{Q}_{g, m}^{\epsilon'}(\mathbb{G}(r, n), d)]^{\vir}\right),
\end{align}
for any $g\ge 0$. Hence we obtain the formula (\ref{WCF}). 

\section{Invariants on (local) Calabi-Yau 3-folds}\label{sec:enu}
In this section, we introduce some enumerative invariants 
of curves 
on (local) Calabi-Yau 3-folds and propose 
related problems. 
Similar invariants for MOP-stable 
quotients are discussed in~\cite[Section~9, 10]{MOP}.
In what follows, we use the notation 
(\ref{univ1}), (\ref{univ2}) for 
universal curves and quotients. 
\subsection{Invariants on a local $(-1, -1)$-curve}
Let us consider a crepant small resolution of a 
conifold singularity, that is the total space of 
$\oO_{\mathbb{P}^{1}}(-1)^{\oplus 2}$, 
\begin{align*}
X=\oO_{\mathbb{P}^1}(-1) \oplus \oO_{\mathbb{P}^1}(-1) \to \mathbb{P}^1.
\end{align*}
In a similar way to~\cite[Section~9]{MOP}, 
we define the $\mathbb{Q}$-valued invariant by
\begin{align}\label{inv1}
N_{g, d}^{\epsilon}(X)\cneq 
\int_{[\overline{Q}_{g, 0}^{\epsilon}(\mathbb{P}^1, d)]^{\vir}}
{\bf e}(R^1 \pi^{\epsilon}_{\ast}(S_{U^{\epsilon}})\oplus 
R^1 \pi^{\epsilon}_{\ast}(S_{U^{\epsilon}})).
\end{align}
It is easy to see that
\begin{align*}\pi^{\epsilon}_{\ast}(S_{U^{\epsilon}})=0,
\end{align*}
 hence $R^1 \pi^{\epsilon}_{\ast}(S_{U^{\epsilon}})$
 is a vector bundle and (\ref{inv1}) is well-defined. 
By Theorem~\ref{thm:cross} (i) and Lemma~\ref{lem:empty}, we have 
\begin{align*}
N_{g, d}^{\epsilon}(X)&=N_{g, d}^{\rm{GW}}(X), \quad \epsilon>2, \\
N_{0, d}^{\epsilon}(X)&=0, \quad 0<\epsilon \le 2/d. 
\end{align*}
Here $N_{g, d}^{\rm{GW}}(X)$ is the 
genus $g$, degree $d$ 
local GW invariant 
of $X$. The following result is 
obtained by the same method of~\cite[Proposition~6, 7]{MOP}, 
using the localization with respect to the 
twisted $\mathbb{C}^{\ast}$-action on $X$, and the vanishing result 
similar to~\cite{FaPan}.
We leave the readers to check the detail. 
\begin{thm}\label{thm:check}
We have the following.
\begin{align*}
N_{g, d}^{\epsilon}(X)=\left\{ \begin{array}{cc}
N_{g, d}^{\rm{GW}}(X), & 2g-2+\epsilon \cdot d>0, \\
0, & 2g-2+\epsilon \cdot d \le 0. \end{array}\right. 
\end{align*}
\end{thm}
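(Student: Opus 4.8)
The plan is to establish Theorem~\ref{thm:check} by localization with respect to a twisted torus action, reducing the computation of $N_{g,d}^{\epsilon}(X)$ to residue contributions over the $T$-fixed loci described in Subsection~\ref{subsec:locl}. First I would set up the twisted $\mathbb{C}^{\ast}$-action on $X = \oO_{\mathbb{P}^1}(-1)^{\oplus 2}$ which scales the two fiber directions with opposite weights, so that the induced action on $\mathbb{P}^1$ is the standard one with two fixed points; this lifts to an action on $\overline{Q}_{g,0}^{\epsilon}(\mathbb{P}^1, d)$ and on the obstruction bundle $\bigoplus_{i=1}^{2} R^1\pi^{\epsilon}_{\ast}(S_{U^{\epsilon}})$ (the two summands carrying the opposite fiber weights). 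Applying the virtual localization formula~(\ref{vloc}) together with the Euler class of the obstruction bundle, the invariant $N_{g,d}^{\epsilon}(X)$ becomes a sum over the graph data $\theta$ of~(\ref{ind:graph}) of integrals of the vertex and edge contributions from Subsection~\ref{sub:vir}, now decorated by the extra Euler class factors.

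The key structural step is a vanishing argument: I expect that almost all $T$-fixed loci contribute zero. The mechanism should mirror the vanishing in~\cite{FaPan}: for a vertex $v$ carrying positive genus $\gamma(v) > 0$, the obstruction bundle restricted to that vertex contains a copy of the dual Hodge bundle $\mathbb{E}^{\vee}$ twisted by an equivariant weight, and because the obstruction bundle appears with two opposite-weight summands, the relevant integrand involves $c_{\text{top}}(\mathbb{E}^{\vee})c_{\text{top}}(\mathbb{E}^{\vee})$ (up to equivariant twists), whose nonequivariant limit vanishes by Mumford's relation $c(\mathbb{E})c(\mathbb{E}^{\vee}) = 1$ once one has enough $\lambda$ classes — or, more precisely, the integral over $\overline{M}_{\gamma(v), \valence(v)|\mathbf{s}(v)}^{\epsilon}$ of a product $\lambda_{\gamma(v)}\lambda_{\gamma(v)}(\dots)$ against a polynomial of too low degree vanishes. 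I would need to verify that~(\ref{sat:Hodge}), $c^{\ast}\mathbb{E} \cong \mathbb{E}$, together with the analogous comparison of $\psi$-classes, makes the relevant intersection numbers on $\overline{M}_{\gamma(v),\valence(v)|\mathbf{s}(v)}^{\epsilon}$ agree with the classical ones on $\overline{M}_{\gamma(v), n(v)}$, so that the Faber--Pandharipande-type vanishing applies verbatim. The upshot is that only configurations concentrated in genus zero — equivalently, configurations which already appear in the stable map picture — survive, so the surviving fixed-point sum is literally the one computing $N_{g,d}^{\mathrm{GW}}(X)$.

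The second ingredient handles the stability constraint. When $2g-2+\epsilon\cdot d \le 0$, Lemma~\ref{lem:empty} (in the case $(g,m)=(0,0)$) already gives emptiness of $\overline{Q}_{0,0}^{\epsilon}(\mathbb{P}^1, d)$ for $d$ small, and more generally the ampleness of $\lL(q,\epsilon)$ forces $2g-2+\epsilon\cdot d > 0$ whenever the moduli space is nonempty; hence $N_{g,d}^{\epsilon}(X) = 0$ in that regime trivially. In the complementary range $2g-2+\epsilon\cdot d > 0$, I would combine the vanishing above with the observation that for $\epsilon > 2$ one has the identification~(\ref{isom1}) $\overline{Q}_{g,0}^{\epsilon}(\mathbb{P}^1,d) \cong \overline{M}_{g,0}(\mathbb{P}^1, d)$ and the obstruction bundle matches the one defining $N_{g,d}^{\mathrm{GW}}(X)$, so $N_{g,d}^{\epsilon}(X) = N_{g,d}^{\mathrm{GW}}(X)$ there; then the $\epsilon$-independence of the surviving fixed-point contributions (which are genus-zero integrals insensitive to the walls, exactly as in the proof of Theorem~\ref{thm:wcf}, using~(\ref{univ:form}) and~(\ref{sat:Hodge})) propagates this equality down to all $\epsilon$ with $2g-2+\epsilon\cdot d > 0$.

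The main obstacle I anticipate is the precise bookkeeping in the vanishing step: one must check that the twisted obstruction bundle $\bigoplus_{i=1}^2 R^1\pi^{\epsilon}_{\ast}(S_{U^{\epsilon}})$ contributes, at each positive-genus vertex, exactly the product of two $\lambda$-class factors needed to kill the integral, accounting correctly for the equivariant weights coming from $\iota_v$ and from the twisting of $X$, and that no cancellation between different fixed loci is needed (i.e. the vanishing is term-by-term). This is where~\cite[Proposition~6, 7]{MOP} and~\cite{FaPan} do the real work; adapting their argument requires only that the vertex moduli spaces $\overline{M}_{\gamma(v),\valence(v)|\mathbf{s}(v)}^{\epsilon}$, with their Hodge and cotangent classes, behave for integration purposes like ordinary Deligne--Mumford spaces, which follows from~(\ref{sat:Hodge}) and the $\psi$-class comparisons in Subsection~\ref{subsec:Class}. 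Once that is in place, the rest is the routine localization computation, and I would leave its details to the reader as the statement already indicates.
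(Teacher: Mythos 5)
Your proposal follows essentially the same route as the paper, which in fact gives no details at all: it simply invokes the method of \cite[Proposition~6, 7]{MOP} (localization for the twisted $\mathbb{C}^{\ast}$-action on $X$ together with a Faber--Pandharipande-type vanishing) and leaves the verification to the reader, exactly the plan you lay out. Your treatment of the degenerate case via ampleness of $\lL(q,\epsilon)$ (reducing to Lemma~\ref{lem:empty}, since $2g-2+\epsilon d\le 0$ forces $g=0$) and of the positive case via the fixed-locus analysis is consistent with that intended argument.
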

Let $F^{\rm{GW}}(X)$ be the generating series, 
\begin{align*}
F^{\rm{GW}}(X)=\sum_{g\ge 0, \ d>0}N_{g, d}^{\rm{GW}}(X)\lambda^{2g-2}t^d.
\end{align*}
Recall that we have the 
 following Gopakumar-Vafa formula, 
\begin{align}\label{GV}
F^{\rm{GW}}(X)=\sum_{d\ge 1}\frac{t^d}{4d \sin^2 (d\lambda/2)}. 
\end{align}
By Theorem~\ref{thm:check} and the formula (\ref{GV}),
the generating series of $N_{g, d}^{\epsilon}(X)$ satisfies 
the formula, 
\begin{align*}
F^{\epsilon}(X)&\cneq 
\sum_{g\ge 0, \ d>0}N_{g, d}^{\epsilon}(X)\lambda^{2g-2}t^d \\
&=\sum_{d\ge 1}\frac{t^d}{4d \sin^2 (d\lambda/2)}
-\sum_{0<d\le 2/\epsilon}\frac{1}{d^3}\lambda^{-2}t^d. 
\end{align*} 
\subsection{Invariants on a local projective plane}
Let us consider the total space of the canonical line 
bundle of $\mathbb{P}^2$, 
\begin{align*}
X=\oO_{\mathbb{P}^2}(-3) \to \mathbb{P}^2. 
\end{align*}
As in the case of a $(-1, -1)$-curve, we can define the invariant by 
\begin{align*}
N_{g, d}^{\epsilon}(X)\cneq 
\int_{[\overline{Q}_{g, 0}^{\epsilon}(\mathbb{P}^2, d)]^{\vir}}
{\bf e}(R^1 \pi^{\epsilon}_{\ast}(S_{U^{\epsilon}}^{\otimes 3}))\in \mathbb{Q},
\end{align*}
since we have the vanishing, 
\begin{align*}
\pi^{\epsilon}_{\ast}(S_{U^{\epsilon}}^{\otimes 3})=0. 
\end{align*}
Note that $N_{g, d}^{\epsilon}(X)$ is a local 
GW invariant of $X$ when $\epsilon>2$. 
However for a small $\epsilon$, 
the following example shows that 
$N_{g, d}^{\epsilon}(X)$ is different from 
the local GW invariant of $X$.  
\begin{exam}
For $X=\oO_{\mathbb{P}^2}(-3)$, 
an explicit computation shows that 
\begin{align*}
N_{1, 1}^{\epsilon}(X)=\left\{
\begin{array}{cc}
\frac{1}{4}, & \epsilon>1, \\
\frac{3}{4}, & 0<\epsilon \le 1. 
\end{array}
 \right. 
\end{align*}
In fact if $\epsilon>1$, then 
$N_{1, 1}^{\epsilon}(X)$ coincides with the local 
GW invariant of $X$, and it is 
already computed.
A list is available in~\cite[Table~1]{AMV}
in a Gopakumar-Vafa form. 

Let us compute $N_{1, 1}^{\epsilon}(X)$ for 
 $0<\epsilon \le 1$. In this case, any $\epsilon$-stable 
quotient of type $(1, 3, 1)$ is MOP-stable, and 
the moduli space is described as
\begin{align*}
\overline{Q}_{1, 0}^{\epsilon}(\mathbb{P}^2, 1)\cong 
\overline{M}_{1, 1}\times \mathbb{P}^2. 
\end{align*}
(cf.~\cite[Example~5.4]{MOP}.)
Also there is no obstruction in this case, 
\begin{align*}
[\overline{Q}_{1, 0}^{\epsilon}(\mathbb{P}^2, 1)]^{\vir}
=[\overline{Q}_{1, 0}^{\epsilon}(\mathbb{P}^2, 1)].
\end{align*}
Let 
\begin{align*}
\pi \colon U \to \overline{M}_{1, 1},
\end{align*}
be the universal curve with a section 
$D\subset U$.
Then 
\begin{align*}
U^{\epsilon}=U\times \mathbb{P}^2 \to 
\overline{Q}_{1, 0}^{\epsilon}(\mathbb{P}^2, 1),
\end{align*}
is the universal curve, and the universal subsheaf 
$S_{U^{\epsilon}} \subset \oO_{U^{\epsilon}}^{\oplus 3}$
is given by 
\begin{align*}
S_{U^{\epsilon}}\cong \oO_{U}(-D)\boxtimes \oO_{\mathbb{P}^2}(-1). 
\end{align*}
Therefore we have 
\begin{align*}
R^1 \pi_{\ast}^{\epsilon}(S_{U^{\epsilon}}^{\otimes 3})
\cong R^1 \pi_{\ast}\oO_{U}(-3D) \boxtimes \oO_{\mathbb{P}^2}(-3). 
\end{align*}
The vector bundle $R^1\pi_{\ast}\oO_U(-3D)$ 
on $\overline{M}_{1, 1}$ 
admits a filtration whose subquotients are
line bundles $\mathbb{E}^{\vee}$, 
$\mathbb{L}_1$ and $\mathbb{L}_1^{\otimes 2}$. 
Therefore the integration of the Euler class is given by 
\begin{align*}
\int_{\overline{Q}_{1, 0}^{\epsilon}(\mathbb{P}^2, 1)}{\bf e}(R^1 \pi_{\ast}^{\epsilon}(S_{U^{\epsilon}}^{\otimes 3}))
&=9\cdot \int_{\overline{M}_{1, 1}}(3\psi_1 -c_1(\mathbb{E})), \\
&=\frac{3}{4}.
\end{align*}
Here the last equality follows from the computation in~\cite{FaPan}, 
\begin{align*}
\int_{\overline{M}_{1, 1}}c_1(\mathbb{E})=
\int_{\overline{M}_{1, 1}}\psi_1=\frac{1}{24}. 
\end{align*}
\end{exam}
By the above example, 
the following problem seems to be interesting. 
\begin{prob}\label{prob1}
How do the invariants $N_{g, d}^{\epsilon}(X)$ depend on $\epsilon$,
when $X=\oO_{\mathbb{P}^2}(-3)$?
\end{prob}

\subsection{Generalized tree level GW 
systems on hypersurfaces}
Let $X$ be a smooth projective variety, defined 
by the degree $N$ homogeneous polynomial $f$
of $n+1$ variables, 
\begin{align*}
X=\{f=0\} \subset \mathbb{P}^{n}. 
\end{align*}
Recall that in Lemma~\ref{lem:nonsing}, 
the moduli stack 
$\overline{Q}_{0, m}^{\epsilon}(\mathbb{P}^n, d)$
is shown to be smooth of the expected dimension. 
We construct the closed substack, 
\begin{align}\label{close}
\overline{Q}_{0, m}^{\epsilon}(X, d)\subset
\overline{Q}_{0, m}^{\epsilon}(\mathbb{P}^n, d),
\end{align}
as follows. For an $\epsilon$-stable quotient 
of type $(1, n+1, d)$, 
\begin{align*}
0 \to S \to \oO_{C}^{\oplus n+1} \to Q \to 0, 
\end{align*}
we take the dual of the first inclusion, 
\begin{align*}
(s_0, s_1, \cdots, s_n) \colon \oO_{C}^{\oplus n+1} 
\to S^{\vee}.
\end{align*}
Applying $f$, we obtain the section, 
\begin{align}\label{fs}
f(s_0, s_1, \cdots, s_n) \in H^0(C, S^{\otimes -N}). 
\end{align}
In genus zero, we have the vanishing
\begin{align*}
R^1 \pi_{\epsilon}^{\ast}(S^{\otimes -N}_{U^{\epsilon}})=0, 
\end{align*}
hence 
(\ref{fs}) determines a section of 
the vector bundle
$\pi_{\ast}^{\epsilon}(S^{\otimes -N}_{U^{\epsilon}})$, 
which we denote 
\begin{align*}
s_f \in H^0(\overline{Q}_{0, m}(\mathbb{P}^n, d), 
\pi_{\ast}^{\epsilon}(S^{\otimes -N}_{U^{\epsilon}})). 
\end{align*}
Then we define (scheme theoretically)
\begin{align}\label{QX}
\overline{Q}_{0, m}^{\epsilon}(X, d)=
\{s_{f}=0\}. 
\end{align}
Note that if $\epsilon>2$, then the above 
space coincides with the moduli stack of genus zero, 
degree $d$ stable 
maps to $X$. 
Since (\ref{QX}) is a zero locus of a section of a
vector bundle on a smooth stack, 
there is a perfect obstruction theory on it, 
determined by the two term complex, 
\begin{align*}
(\pi_{\ast}^{\epsilon}(S^{\otimes -N}))^{\vee}
\to \Omega_{\overline{Q}_{0, m}^{\epsilon}(\mathbb{P}^n, d)
}|_{\overline{Q}_{0, m}^{\epsilon}(X, d)}.
\end{align*}
The associated virtual class is denoted by 
\begin{align*}
[\overline{Q}_{0, m}^{\epsilon}(X, d)]^{\vir}
\in A_{\ast}(\overline{Q}_{0, m}^{\epsilon}(X, d), 
\mathbb{Q}).
\end{align*}
The evaluation map factors through $X$, 
\begin{align*}
\ev_i \colon \overline{Q}_{0, m}^{\epsilon}(X, d) \to X,
\end{align*}
for $1\le i \le m$. 
Hence we obtain the diagram, 
\begin{align*}
\xymatrix{
\overline{Q}_{0, m}^{\epsilon}(X, d) \ar[r]^{\alpha} 
\ar[d]_{(\ev_1, \cdots, \ev_m)}&
\overline{M}_{0, m} \\
X\times \cdots \times X, & }
\end{align*}
and a system of maps, 
\begin{align}\label{system}
I_{0, m, d}^{\epsilon} =&\alpha_{\ast}(\ev_1, \cdots, \ev_m)^{\ast}:
H^{\ast}(X, \mathbb{Q})^{\otimes m} \to H^{\ast}(\overline{M}_{0, m}, 
\mathbb{Q}). 
\end{align}
It is straightforward to 
check that the above system of maps (\ref{system}) 
satisfies the axiom of 
tree level GW system~\cite{KonMa}. 
In particular, 
we have the genus zero GW type invariants, 
\begin{align*}
\langle I_{0, m, d}^{\epsilon}\rangle
(\gamma_1 \otimes \cdots \otimes \gamma_m)
=\int_{\overline{M}_{0, m}}I_{0, m, d}^{\epsilon}
(\gamma_1 \otimes \cdots \otimes \gamma_m), 
\end{align*}
for $\gamma_i \in H^{\ast}(X, \mathbb{Q})$.
The formal function 
\begin{align*}
\Phi^{\epsilon}(\gamma)
=\sum_{m\ge 3, \ d\ge 0}
\frac{1}{n!}
\langle I_{0, m, d}^{\epsilon} \rangle
(\gamma^{\otimes m})q^d,
\end{align*}
satisfies the WDVV equation~\cite{KonMa}, 
and induces the generalized 
big (small) quantum cohomology ring, 
\begin{align*}
(H^{\ast}(X, \mathbb{Q})\db[ q \db], \circ^{\epsilon}),
\end{align*}
depending on $\epsilon\in \mathbb{R}_{>0}$. 
For $\epsilon>2$, the above ring 
coincides with the big (small) quantum cohomology 
ring defined by the GW theory on $X$. 
\begin{rmk}
The above construction of the generalized 
tree level GW system 
can be easily generalized to  
any complete intersection of the Grassmannian
$X\subset \mathbb{G}(r, n)$.  
\end{rmk}
\begin{rmk}
As discussed 
in~\cite[Section~10]{MOP} for MOP-stable quotients, 
it might be possible to
define the substack (\ref{close}) 
and the virtual class on it for every genera. 
\end{rmk}
\subsection{Enumerative invariants on 
projective Calabi-Yau 3-folds}
The construction in the previous subsection 
enables us to construct 
genus zero GW type invariants 
without point insertions
on several projective Calabi-Yau 3-folds. 
One of the interesting examples is 
a quintic 3-fold, 
\begin{align*}
X \subset \mathbb{P}^4. 
\end{align*}
We can define the invariant, 
\begin{align}\notag
N_{0, d}^{\epsilon}(X)&=
\int_{[\overline{Q}_{0, d}^{\epsilon}(X, d)]^{\vir}}1 \\
\label{quin:inv}
&=\int_{\overline{Q}_{0, d}^{\epsilon}(\mathbb{P}^4, d)}
{\bf e} \left(\pi_{\ast}^{\epsilon}(S_{U^{\epsilon}}^{\vee \otimes 5})\right)
\in \mathbb{Q}. 
\end{align}
Another interesting example is a Calabi-Yau 3-fold 
obtained as a complete intersection of the 
Grassmannian $\mathbb{G}(2, 7)$. 
Let us consider the Pl$\ddot{\rm{u}}$cker embedding, 
\begin{align*}
\mathbb{G}(2, 7)\hookrightarrow \mathbb{P}^{20}, 
\end{align*}
and take general hyperplanes 
\begin{align}\label{hype}
H_1, \cdots, H_7 \subset \mathbb{P}^{20}.
\end{align}
Then the intersection 
\begin{align*}
X =\mathbb{G}(2, 7)\cap H_1 \cap \cdots \cap H_7,
\end{align*}
is a projective Calabi-Yau 3-fold. 
The hyperplanes (\ref{hype}) 
defines the section, 
\begin{align*}
s_{H} \in H^0(\overline{Q}_{0, m}^{\epsilon}(\mathbb{G}(2, 7), d), 
\pi_{\ast}^{\epsilon}(\wedge^2 S^{\vee}_{U^{\epsilon}})^{\oplus 7}), 
\end{align*}
and we define 
\begin{align}\label{QXp}
\overline{Q}_{0, m}^{\epsilon}(X, d)
=\{s_H=0\}. 
\end{align}
As in the previous subsection, there is a 
perfect obstruction theory 
and the virtual class on (\ref{QXp}). 
In particular, we can define
\begin{align}\notag
N_{0, d}^{\epsilon}(X)&=
\int_{[\overline{Q}_{0, d}^{\epsilon}(X, d)]^{\vir}}1 \\
\label{grass:inv}
&=\int_{\overline{Q}_{0, d}^{\epsilon}(\mathbb{G}(2, 7), d)}
{\bf e} \left(\pi_{\ast}^{\epsilon}
(\wedge^{2}S_{U^{\epsilon}}^{\vee})^{\oplus 7}\right)
\in \mathbb{Q}. 
\end{align}
For $\epsilon>2$, both
 invariants (\ref{quin:inv}), (\ref{grass:inv})
coincide with the GW invariants
of $X$. 
As in Problem~\ref{prob1}, we can 
address the following problem. 
\begin{prob}
How do the invariants $N_{0, d}^{\epsilon}(X)$ depend on $\epsilon$,
when $X$ is a quintic 3-fold in $\mathbb{P}^4$
or a complete intersection of $\mathbb{G}(2, 7)$
of codimension $7$?
\end{prob}

Institute for the Physics and 
Mathematics of the Universe, University of Tokyo

\textit{E-mail address}:toda-914@pj9.so-net.ne.jp, 
yukinobu.toda@ipmu.jp

2000 Mathematics Subject Classification. 
14N35 (Primary); 14H60 (Secondary)


\begin{thebibliography}{10}

\bibitem{AMV}
M.~Aganagic, M.~Marino, and C.~Vafa.
\newblock All {L}oop {T}opological {S}tring {A}mplitidues from {C}hern-{S}imons
  {T}heory.
\newblock {\em Communications in Mathematical Physics}, Vol. 247, pp. 467--512,
  2004.

\bibitem{GuAl}
V.~Alexeev and M.~Guy.
\newblock Moduli of weighted stable maps and their gravitational descendants.
\newblock {\em Journal of the Insitute of Mathematics of Jussieu}, Vol.~7, pp.
  425--456, 2008.

\bibitem{BaMa}
A.~Bayer and E.~Macri.
\newblock The space of stability conditions on the local projective plane.
\newblock {\em preprint}.
\newblock arXiv:0912.0043.

\bibitem{BGW}
K.~Behrend.
\newblock Gromov-{W}itten invariants in algebraic geometry.
\newblock {\em Invent.~Math.~}, Vol. 127, pp. 601--617, 1997.

\bibitem{BF}
K.~Behrend and B.~Fantechi.
\newblock The intrinsic normal cone.
\newblock {\em Invent.~Math.~}, Vol. 128, pp. 45--88, 1997.

\bibitem{Bert1}
A.~Bertram.
\newblock Towards a {S}chbert calculus for maps from a {R}iemann surface to a
  {G}rassmannian.
\newblock {\em Internat.~J.~Math.~}, Vol.~5, pp. 811--825, 1994.

\bibitem{Bert2}
A.~Bertram.
\newblock Quantum {S}chbert calculus.
\newblock {\em Adv.~Math.~}, Vol. 128, pp. 289--305, 1997.

\bibitem{BDW}
A.~Bertram, G.~Daskalopoulos, and R.~Wentworth.
\newblock Gromov invariants for holomorphic maps from {R}iemann surfaces to
  {G}rasmannians.
\newblock {\em J.~Amer.~Math.~Soc.~}, Vol.~9, pp. 529--571, 1996.

\bibitem{BrH}
T.~Bridgeland.
\newblock Hall algebras and curve-counting invariants.
\newblock {\em preprint}.
\newblock arXiv:1002.4374.

\bibitem{CK}
I.~Ciocan-Fontanine and M.~Kapranov.
\newblock Virtual fundamental classes via dg-manifolds.
\newblock {\em Geom.~Topol.~}, Vol.~13, pp. 1779--1804, 2009.

\bibitem{FaPan}
C.~Faber and R.~Pandharipande.
\newblock Hodge integrals and {G}romov-{W}itten theory.
\newblock {\em Invent.~Math}, Vol. 139, pp. 173--199, 2000.

\bibitem{GP}
T.~Graber and R.~Pandharipande.
\newblock Localization of virtual classes.
\newblock {\em Invent.~Math.~}, Vol. 135, pp. 487--518, 1999.

\bibitem{BH}
B.~Hassett.
\newblock Moduli spaces of weighted pointed stable curves.
\newblock {\em Adv.~Math.~}, Vol. 173, pp. 316--352, 2003.

\bibitem{JS}
D.~Joyce and Y.~Song.
\newblock A theory of generalized {D}onaldson-{T}homas invariants.
\newblock {\em preprint}.
\newblock arXiv:0810.5645.

\bibitem{KMM}
Y.~Kawamata, K.~Matsuda, and K.~Matsuki.
\newblock Introduction to the {M}inimal {M}odel {P}roblem.
\newblock {\em Adv.~Stud.~Pure Math}, Vol.~10, pp. 283--360, 1987.

\bibitem{KIMO}
Y.~H. Kiem and H.~B. Moon.
\newblock Moduli spaces of weighted pointed stable rational curves via {GIT}.
\newblock {\em preprint}.
\newblock arXiv:1002.2461.

\bibitem{KimPan}
B.~Kim and R.~Pandharipande.
\newblock The connectedness of the moduli spaces of maps to homogeneours
  spaces.
\newblock {\em Symplectic geometry and mirror symmetry}, pp. 187--201, 2001.

\bibitem{FCK}
F.~Kirwan.
\newblock Partial {D}esingularisations of {Q}uotients of {N}onsingular
  {V}arieties and their {B}etti {N}umbers.
\newblock {\em Ann.~of Math.~}, Vol. 122, pp. 41--85, 1985.

\bibitem{KM}
J.~Koll{\'a}r and S.~Mori.
\newblock {\em Birational geometry of algebraic varieties}, Vol. 134 of {\em
  Cambridge Tracts in Mathematics}.
\newblock Cambridge University Press, 1998.

\bibitem{Ktor}
M.~Kontsevich.
\newblock Enumeration of rational curves via torus actions.
\newblock {\em The moduli space of curves, Progr.~Math.~}, Vol. 129, pp.
  335--368, 1995.

\bibitem{KonMa}
M.~Kontsevich and Y.~Manin.
\newblock Gromov-{W}itten classes, quantum cohomology, and enumerative
  geometry.
\newblock {\em Comm.~Math.~Phys.~}, Vol. 164, pp. 525--562, 1994.

\bibitem{K-S}
M.~Kontsevich and Y.~Soibelman.
\newblock Stability structures, motivic {D}onaldson-{T}homas invariants and
  cluster transformations.
\newblock {\em preprint}.
\newblock arXiv:0811.2435.

\bibitem{GL}
G.~Laumon and L.~Moret-Bailly.
\newblock {\em Champs alg{\'e}briques}, Vol.~39 of {\em Ergebnisse der
  Mathematik und ihrer Grenzgebiete}.
\newblock Springer Verlag, Berlin, 2000.

\bibitem{LTV}
J.~Li and G.~Tian.
\newblock Virtual moduli cycles and {G}romov-{W}itten invariants of algebraic
  varieties.
\newblock {\em J.~Amer.~Math.~Soc.~}, Vol.~11, pp. 119--174, 1998.

\bibitem{MO}
A.~Marian and D.~Oprea.
\newblock Virtual intersections on the {Q}uot schemes and {V}afa-{I}ntriligator
  formulas.
\newblock {\em Duke.~Math.~J.~}, Vol. 136, pp. 81--113, 2007.

\bibitem{MOP}
A.~Marian, D.~Oprea, and R.~Pandharipande.
\newblock The moduli space of stable quotients.
\newblock {\em preprint}.
\newblock arXiv:0904.2992.

\bibitem{MNOP}
D.~Maulik, N.~Nekrasov, A.~Okounkov, and R.~Pandharipande.
\newblock Gromov-{W}itten theory and {D}onaldson-{T}homas theory. {I}.
\newblock {\em Compositio.~Math}, Vol. 142, pp. 1263--1285, 2006.

\bibitem{Mum}
D.~Mumford, J.~Fogarty, and F.~Kirwan.
\newblock {\em Geometric {I}nvariant {T}heory. {T}hird {E}nlarged {E}dition}.
\newblock Springer-Verlag, 1994.

\bibitem{MMu}
A.~Mustat$\check{\rm{a}}$ and A.~Mustat$\check{\rm{a}}$.
\newblock Intermediate moduli spaces of stable maps.
\newblock {\em Invent.~Math}, Vol. 167, pp. 47--90, 2007.

\bibitem{PR}
M.~Popa and M.~Roth.
\newblock Stable maps and {Q}uot schemes.
\newblock {\em Invent.~Math.~}, Vol. 152, pp. 625--663, 2003.

\bibitem{StTh}
J.~Stoppa and R.~P. Thomas.
\newblock Hilbert schemes and stable pairs: {GIT} and derived category wall
  crossings.
\newblock {\em preprint}.
\newblock arXiv:0903.1444.

\bibitem{Thom}
R.~P. Thomas.
\newblock A holomorphic {C}asson invariant for {C}alabi-{Y}au 3-folds and
  bundles on ${K3}$-fibrations.
\newblock {\em J.~Differential.~Geom}, Vol.~54, pp. 367--438, 2000.

\bibitem{Tcurve1}
Y.~Toda.
\newblock Curve counting theories via stable objects~{I}: {DT/PT}
  correspondence.
\newblock {\em J.~Amer.~Math.~Soc.~}, Vol.~23, pp. 1119--1157, 2010.

\bibitem{Tolim2}
Y.~Toda.
\newblock Generating functions of stable pair invariants via wall-crossings in
  derived categories.
\newblock {\em Adv.~Stud.~Pure Math.~}, Vol.~59, pp. 389--434, 2010.
\newblock New developments in algebraic geometry, integrable sysmtems and
  mirror symmetry (RIMS, Kyoto, 2008).

\end{thebibliography}
\end{document}